\numberwithin{equation}{theorem}
\numberwithin{equation}{theorem}
\def\@tocline#1#2#3#4#5#6#7{\relax
  \ifnum #1>\c@tocdepth 
  \else
    \par \addpenalty\@secpenalty\addvspace{#2}%
    \begingroup \hyphenpenalty\@M
    \@ifempty{#4}{%
      \@tempdima\csname r@tocindent\number#1\endcsname\relax
    }{%
      \@tempdima#4\relax
    }%
    \parindent\z@ \leftskip#3\relax \advance\leftskip\@tempdima\relax
    \rightskip\@pnumwidth plus4em \parfillskip-\@pnumwidth
    #5\leavevmode\hskip-\@tempdima
      \ifcase #1
       \or\or \hskip 1em \or \hskip 2em \else \hskip 3em \fi%
      #6\nobreak\relax
    \hfill\hbox to\@pnumwidth{\@tocpagenum{#7}}\par
    \nobreak
    \endgroup
  \fi}
\begin{document}

\title{A characteristic \MakeLowercase{\emph{p}} analog of formal lifting properties}
\author{Rankeya Datta}
\email{rankeya.datta@missouri.edu}
\thanks{}
\author{Noah Olander}
\email{nolander@berkeley.edu}

\begin{abstract}
   A field extension $L/K$ of characteristic $p > 0$ is formally \'etale if and only if the relative Frobenius of $L/K$ is an isomorphism. Inspired by this classical result, we explore whether the formally \'etale property for a map $R \to S$ of $\mathbf{F}_p$-algebras is characterized by isomorphism of the relative Frobenius $F_{S/R}$. While $F_{S/R}$ being an isomorphism implies $R \to S$ is formally \'etale, the converse fails in the non-Noetherian setting. Thus, following Morrow, we introduce an enhancement of the formally \'etale property that we call b-nil (bounded nil) formally \'etale, and we show that $F_{S/R}$ is an isomorphism precisely when $R \to S$ is b-nil formally \'etale. We prove this result by first establishing several structural properties of b-nil formally smooth maps, which are defined analogously to the formally smooth case.  Our structural results reveal that the b-nil formally smooth (resp. \'etale) property is quite different from the formally smooth (resp. \'etale) property. For instance, we show that any b-nil formally smooth algebra over an $F$-pure ring is reduced, whereas non-reduced formally \'etale algebras exist over $\mathbf{F}_p$ by a construction of Bhatt. We also show that the b-nil formally \'etale property neither implies nor is implied by having a trivial cotangent complex.   We explore when formally smooth (resp. \'etale) implies b-nil formally smooth (resp. \'etale) in prime characteristic. A satisfactory picture emerges for ideal adic completions.
\end{abstract}
\maketitle

\setcounter{tocdepth}{2}
\tableofcontents

\section{Introduction}
A ring homomorphism $R \to S$ is \emph{formally \'etale} if for all pairs $(A,I)$, where $A$ is an $R$-algebra and $I \subset A$ is an ideal such that $I^2 = 0$, and for all commutative diagrams of rings
\[\begin{tikzcd}[cramped]
	{A/I} & S \\
	A & R
	\arrow["f"', from=1-2, to=1-1]
	\arrow[two heads, from=2-1, to=1-1]
	\arrow[from=2-2, to=1-2]
	\arrow[from=2-2, to=2-1]
\end{tikzcd}\]
where $A \twoheadrightarrow A/I$ is the canonical projection, there exists a unique $R$-algebra map $\widetilde{f} \colon S \to A$ that lifts $f$ along $A \twoheadrightarrow A/I$, i.e., $\widetilde{f}$ makes the triangles in the following diagram commute:
\[\begin{tikzcd}[cramped]
	{A/I} & S \\
	A & R
	\arrow["f"',from=1-2, to=1-1]
	\arrow["{\exists!\widetilde{f}}"{description}, color={rgb,255:red,214;green,92;blue,92}, dashed, from=1-2, to=2-1]
	\arrow[two heads, from=2-1, to=1-1]
	\arrow[from=2-2, to=1-2]
	\arrow[from=2-2, to=2-1].
\end{tikzcd}\]
Thus, a formally \'etale ring map is \emph{formally unramified} (which is the condition that an $R$-algebra lift of $f$ is unique) and \emph{formally smooth} (which is the condition that an $R$-algebra lift of $f$ exists).

The formally \'etale property is already interesting for a field extension $L/K$. When $\Char(K) = 0$, $L/K$ is formally \'etale if and only if $L/K$ is separable algebraic. The situation is more delicate when $\Char(K) = p > 0$. Indeed, while separable algebraic field extensions are formally \'etale in any characteristic, if $L/K$ is an extension of perfect fields of characteristic $p$, then $L/K$ is always formally \'etale but it is not necessarily algebraic or separably generated. Nevertheless, one has the following classical characterization.

\begin{theorem}\cite[Theorem~26.7]{MatsumuraCommutativeRingTheory}
    \label{thm:formally-etale-fields}
    Let $L/K$ be an extension of fields of characteristic $p > 0$. The following are equivalent:
    \begin{enumerate}
        \item $L/K$ is formally \'etale.
        \item Any absolute $p$-basis of $K$ is also an absolute $p$-basis of $L$.
        \item The canonical map $K \otimes_{K^p} L^p \to L$ is an isomorphism.
    \end{enumerate}
\end{theorem}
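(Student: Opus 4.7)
The plan is to take (3) as the central condition. I will prove $(3) \Leftrightarrow (2)$ by a direct linear-algebraic manipulation of absolute $p$-bases, and $(3) \Leftrightarrow (1)$ by combining the canonical Frobenius lift of square-zero extensions with standard $p$-basis theory for K\"ahler differentials.

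The equivalence $(3) \Leftrightarrow (2)$ reduces to the definition of an absolute $p$-basis: $B \subset K$ is one precisely when the monomials $\prod_b b^{e_b}$ with $0 \le e_b < p$ form a $K^p$-basis of $K$, and analogously for $L$. Given (3), transporting a $K^p$-basis of $K$ across the isomorphism $L \cong K \otimes_{K^p} L^p$ yields an $L^p$-basis of $L$. Conversely, if $B$ is a $p$-basis of both $K$ and $L$, the multiplication map carries a free $L^p$-basis of $K \otimes_{K^p} L^p$ (obtained from the $K^p$-basis of $K$ by base change) to a free $L^p$-basis of $L$, and so must be an isomorphism.

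For $(3) \Rightarrow (1)$, exploit that in characteristic $p$ any square-zero extension $(A, I)$ admits a canonical ring homomorphism $\phi \colon A/I \to A$ defined by $\phi(\bar a) = \tilde a^p$ for any lift $\tilde a$ of $\bar a$; this is well-defined because $(\tilde a + i)^p = \tilde a^p$ when $I^2 = 0$, additive since $(\tilde a + \tilde b)^p = \tilde a^p + \tilde b^p$ in characteristic $p$, and multiplicative by Frobenius. Given a $K$-algebra $A$ with structure map $\iota \colon K \to A$ and a $K$-algebra map $f \colon L \to A/I$, define $\tilde f \colon L \cong K \otimes_{K^p} L^p \to A$ by $\tilde f(k \otimes l^p) = \iota(k)\,\phi(f(l))$. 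Well-definedness reduces to checking $\phi(f(k'')) = \iota(k'')^p$ for $k'' \in K$, which is immediate because $\iota(k'')$ itself lifts $f(k'')$; that $\tilde f$ is a $K$-algebra lift of $f$ is then routine to verify. Uniqueness follows because any $K$-algebra lift of $f$ must agree with $\iota$ on $K$ and, by raising to $p$-th powers, with $\phi \circ f$ on $L^p$, hence coincide with $\tilde f$ on $K \cdot L^p = L$.

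The main obstacle I foresee is $(1) \Rightarrow (3)$. Formal \'etaleness splits into formally unramified plus formally smooth. The formally unramified part gives $\Omega_{L/K} = 0$, and the $p$-basis description of $\Omega_{L/K}$, whose basis is given by $db$ as $b$ runs over a $p$-basis of $L/K$, forces $L = K \cdot L^p$, i.e., surjectivity in (3). For injectivity, I plan to use that for a field extension in characteristic $p$ formal smoothness is equivalent to MacLane separability, namely that $L$ and $K^{1/p}$ are linearly disjoint over $K$; a standard $p$-th root base change then converts this into linear disjointness of $K$ and $L^p$ over $K^p$ inside $L$, which is exactly injectivity of $K \otimes_{K^p} L^p \to L$. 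Combining surjectivity and injectivity yields the isomorphism asserted in (3).
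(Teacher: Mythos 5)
Your proposal is essentially correct, but it is worth comparing it with the sources, since the paper does not prove \autoref{thm:formally-etale-fields} itself but cites \cite[Theorem~26.7]{MatsumuraCommutativeRingTheory}, and generalizes part of it later. Your $(3)\Leftrightarrow(2)$ is the same free-module bookkeeping as \autoref{lem:p-basis-another-characterization}, and your $(3)\Rightarrow(1)$, built on the canonical ring map $A/I \to A$, $\bar a \mapsto \tilde a^{\,p}$ for a square-zero ideal, is exactly the mechanism the paper isolates in \autoref{lem:lifting-lemma-Frobenius} (there for $I^{[p]}=0$) and uses in \autoref{thm:relative-Frobenius-iso-b-nil formally-etale} and \autoref{cor:relative-Frobenius-iso-implies-formally-etale}; so these parts coincide with the paper's and Matsumura's approach. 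Where you genuinely diverge is $(1)\Rightarrow(3)$. Your surjectivity step ($\Omega_{L/K}=0$ forces an empty $p$-basis of $L$ over $K$, hence $L=KL^p$) is the same argument that underlies \autoref{lem:formally-unramified-field-extension-trivial} and is fine. For injectivity you invoke Cartier's theorem (formal smoothness of a field extension implies separability, \cite[Theorem~26.9]{MatsumuraCommutativeRingTheory}) plus MacLane's criterion and a Frobenius twist: formally smooth $\Rightarrow$ $L$ and $K^{1/p}$ linearly disjoint over $K$ $\Rightarrow$ (applying $x\mapsto x^p$) $K$ and $L^p$ linearly disjoint over $K^p$, which is precisely injectivity of $K\otimes_{K^p}L^p\to L$. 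This is valid, with one caveat: in Matsumura's development Theorem~26.9 is proved \emph{after} 26.7 (its ``separable $\Rightarrow$ $0$-smooth'' half rests on 26.8, which rests on 26.7), so to avoid circularity you must make sure the direction you actually use --- formally smooth $\Rightarrow$ separable --- is quoted with a proof independent of 26.7; such proofs exist (vanishing of the imperfection module, i.e.\ of $H^{-1}$ of the naive cotangent complex, or the split exactness of the first fundamental sequence). Note also that this last fact gives a more economical route, which is essentially the classical one: formal smoothness of $L/K$ makes $0\to\Omega_{K/\mathbf{F}_p}\otimes_K L\to\Omega_{L/\mathbf{F}_p}\to\Omega_{L/K}\to 0$ split exact, so an absolute $p$-basis $B$ of $K$ stays $p$-independent in $L$, while $\Omega_{L/K}=0$ makes $\{db\}_{b\in B}$ generate $\Omega_{L/\mathbf{F}_p}$, i.e.\ $L=L^p[B]$; this yields $(2)$, hence $(3)$, in one stroke, whereas your Cartier--MacLane detour proves the same thing with a heavier classical input.
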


The map $K \otimes_{K^p} L^p \to L$ can be identified with the relative Frobenius of $L/K$. More generally, if $\varphi \colon R \to S$ is a homomorphism of $\mathbf{F}_p$-algebras, then the \emph{relative Frobenius of $\varphi$}, denoted $F_\varphi$, is the unique ring map $F_*R \otimes_R S \to F_*S$ such that the following diagram commutes:
 \[
   \begin{tikzcd}[cramped]
	R && S \\
	{F_*R} && {F_*R\otimes_RS} \\
	&&& {F_*S}
	\arrow["\varphi", from=1-1, to=1-3]
	\arrow["{F_R}"', from=1-1, to=2-1]
	\arrow["{F_R\otimes_R\id_S}", from=1-3, to=2-3]
	\arrow["{F_S}", curve={height=-24pt}, from=1-3, to=3-4]
	\arrow["{\id_{F_*R}\otimes_R\varphi}"', from=2-1, to=2-3]
	\arrow["{F_*\varphi}"', curve={height=24pt}, from=2-1, to=3-4]
	\arrow["{\exists! F_\varphi}"{description}, color={rgb,255:red,214;green,92;blue,92}, from=2-3, to=3-4].
\end{tikzcd}
\]
 Here $F_R$ (resp. $F_S$) is the absolute Frobenius ($p$-th power) endomorphism of $R$ (resp. $S$), and $F_*R$ is the ring $R$ viewed as an $R$-module by restriction of scalars along $F_R$ (and similarly for $F_*S$).

 Inspired by \autoref{thm:formally-etale-fields} one can ask the following natural question (c.f. \cite[Question~3.9]{Bhattp-adic}):

 \begin{question}
     If $\varphi \colon R \to S$ is a map of $\mathbf{F}_p$-algebras, does $F_\varphi$ being an isomorphism characterize $\varphi$ being formally \'etale?
 \end{question}

If $F_\varphi$ is an isomorphism, then $\varphi$ is formally \'etale; see \autoref{thm:relative-Frobenius-iso-b-nil formally-etale} and \autoref{cor:relative-Frobenius-iso-implies-formally-etale} for an elementary proof along the lines of the one given in \cite{MatsumuraCommutativeRingTheory} for field extensions\footnote{$F_\varphi$ being an isomorphism at the level of rings does not imply the cotangent complex $L_{\varphi}$ is trivial (\autoref{eg:formally-etale-not-b-nil formally-etale}~\autoref{eg:formally-etale-not-b-nil formally-etale.c}), although $H^0(L_\varphi) = H^{-1}(L_\varphi) = 0$ by formal \'etaleness \cite[\href{https://stacks.math.columbia.edu/tag/0D0M}{Tag 0D0M}, \href{https://stacks.math.columbia.edu/tag/08RB}{Tag 08RB}]{stacks-project}.}.  However, we give an example  of a formally \'etale map of semi-perfect rings (i.e. rings with surjective absolute Frobenius) for which the relative Frobenius is not an isomorphism; see \autoref{eg:formally-etale-not-b-nil formally-etale}~\autoref{eg:formally-etale-not-b-nil formally-etale.b}. This raises the natural question of whether isomorphism of the relative Frobenius corresponds to a lifting property stronger than the one used to define formally \'etale maps. We show that such a strengthening indeed exists, and we call the corresponding notion \emph{b-nil formally \'etale}. 

\begin{theorem}(\autoref{thm:relative-Frobenius-iso-b-nil formally-etale})
    \label{thm:main-thm-intro}
    Let $\varphi \colon R \to S$ be a map of $\mathbf{F}
    _p$-algebras. The following are equivalent:
    \begin{enumerate}
        \item $\varphi$ is b-nil formally \'etale.
        \item The relative Frobenius $F_{\varphi}$ is b-nil formally \'etale.
        \item The relative Frobenius $F_{\varphi}$ is b-nil formally smooth.
        \item The relative Frobenius $F_{\varphi}$ is an isomorphism.
    \end{enumerate} 
\end{theorem}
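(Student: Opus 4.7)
The plan is to close the cycle $(4) \Rightarrow (1) \Rightarrow (2) \Rightarrow (3) \Rightarrow (4)$, in which only $(2) \Rightarrow (3)$ is immediate.

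For $(4) \Rightarrow (1)$, I first observe that if $F_\varphi$ is an isomorphism then so is the iterated relative Frobenius $F_\varphi^{(k)} \colon F_*^k R \otimes_R S \to F_*^k S$ for every $k \geq 1$. Given a b-nil lifting problem $(A,I)$ with $I^n = 0$, choose $k$ with $p^k \geq n$ so that $I^{p^k} = 0$. The $p^k$-power map on $A$ then factors through the projection $\pi \colon A \to A/I$, producing a ring map $\sigma_k \colon A/I \to A$. For any $R$-algebra map $f \colon S \to A/I$, the composite $\sigma_k \circ f$ is a ring map satisfying $(\sigma_k \circ f) \circ \varphi = F_A^k \circ \rho$ (where $\rho \colon R \to A$ is the structure map), so by the bijection induced by the iso $F_\varphi^{(k)}$ it corresponds to a unique $R$-algebra map $\widetilde f \colon S \to A$ with $F_A^k \circ \widetilde f = \sigma_k \circ f$. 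To verify $\pi \circ \widetilde f = f$, note both are $R$-algebra maps agreeing after applying $F_{A/I}^k$, i.e.\ on $p^k$-th powers; since surjectivity of $F_\varphi^{(k)}$ implies that $\varphi(R)$ together with $S^{p^k}$ generate $S$ as an $R$-algebra, the two must coincide. Uniqueness of the lift is analogous: any two lifts of $f$ both satisfy $F_A^k \circ \widetilde f = \sigma_k \circ f$ (using $I^{p^k} = 0$), so they coincide by injectivity of $F_\varphi^{(k)}$.

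For $(1) \Rightarrow (2)$, I would invoke closure of b-nil formal \'etaleness under base change and an appropriate cancellation property, presumably established alongside the structural theory developed earlier in the paper. Base-changing $\varphi$ along $F_R$ produces a b-nil formally \'etale map $F_*R \to F_*R \otimes_R S$. The ring map $F_*\varphi \colon F_*R \to F_*S$ coincides with $\varphi$ as an underlying ring homomorphism, so it is also b-nil formally \'etale. Since $F_*\varphi$ factors as the composite $F_*R \to F_*R \otimes_R S \xrightarrow{F_\varphi} F_*S$, cancellation forces $F_\varphi$ to be b-nil formally \'etale.

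The implication $(2) \Rightarrow (3)$ is immediate from the definitions. The hardest step is $(3) \Rightarrow (4)$, since b-nil formal smoothness alone does not force isomorphism for a general ring map; the argument must exploit the specific shape of the relative Frobenius, whose image is the $\varphi(R)$-subalgebra of $S$ generated by $p$-th powers. My plan is first to apply the b-nil formal smoothness of $F_\varphi$ to produce a section (establishing surjectivity), and then to use another application of the lifting property, combined with structural results from the paper on b-nil formally smooth algebras (such as those asserting reducedness over $F$-pure rings mentioned in the abstract), to conclude that the kernel of $F_\varphi$ vanishes. The central technical difficulty is engineering the lifting problems so that the relevant ideals are genuinely b-nilpotent, which is where the preceding structural theory of b-nil formal smoothness intervenes.
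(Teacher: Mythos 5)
Three of your four arrows are in good shape and essentially follow the paper's route. Your $(4)\Rightarrow(1)$ is the same mechanism as the paper's: the lift is built from the $p^k$-power map of $A$, which factors through $A/I$, composed with the inverse of the (iterated) relative Frobenius, and uniqueness comes from the fact that two lifts agree on $\varphi(R)$ and on $p^k$-th powers, which generate $S$ when the relative Frobenius is surjective. One notational caution: a bounded nil ideal satisfies $I^{[n]}=0$ (elementwise $n$-th powers vanish), not $I^{n}=0$; non-finitely generated b-nil ideals need not be nilpotent. Your argument only ever uses that $p^k$-th powers of \emph{elements} of $I$ vanish, so it survives once stated with the bracket power. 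Your $(1)\Rightarrow(2)$ via base change plus cancellation is exactly the paper's argument, and both auxiliary properties are indeed proved there; $(2)\Rightarrow(3)$ is immediate, as you say.

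The genuine gap is $(3)\Rightarrow(4)$: what you give is a plan, not a proof, and the plan's division of labor is wrong. Applying b-nil formal smoothness of $F_\varphi$ to the surjection $S^{(p)}[T_i]/(T_i^p-(1\otimes f_i))\twoheadrightarrow S$ (whose kernel is b-nil) produces a section of that surjection and hence exhibits $S$ as a direct summand of a free $S^{(p)}$-module; this gives \emph{projectivity} (and faithful flatness) of $S$ over $S^{(p)}$ (\autoref{rem:relative-Frob-b-nil-smooth-projective}), but it does not establish surjectivity of $F_\varphi$, which is the much stronger assertion $S=\varphi(R)[S^p]$. Likewise, the reducedness result (\autoref{cor-geometricallyreduced}) is not used to kill $\ker F_\varphi$ directly; in the paper it enters through the fibers: for every map $S^{(p)}\to k$ with $k$ a perfect field, the base change $k\to S\otimes_{S^{(p)}}k$ is b-nil formally smooth with one-point spectrum, hence reduced, hence a purely inseparable field extension of the perfect field $k$, hence an isomorphism. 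The actual content of the implication is the endgame combining these two facts: $S$ is a projective $S^{(p)}$-module all of whose fibers over perfect fields are one-dimensional, so after localizing it is free of rank one, and a map of free rank-one modules that is an isomorphism modulo the maximal ideal (after passing to the algebraic closure of the residue field) is an isomorphism; therefore $F_\varphi$ is an isomorphism. None of this appears in your sketch, and injectivity and surjectivity of $F_\varphi$ are obtained together by this module-theoretic argument rather than in the two separate stages you propose; your own closing sentence concedes that the required lifting problems have not been engineered. As written, the step $(3)\Rightarrow(4)$ is therefore unproved.
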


 We learned of the b-nil formally \'etale property from notes by Morrow; see \cite[Project~C]{Morrow2019THH}. The term `b-nil' is an abbreviation of \emph{bounded nil}; see \autoref{def:nil-ideal}. In prime characteristic $p > 0$, $R \to S$ is \emph{b-nil formally \'etale} precisely when for all pairs $(A,I)$, where $A$ is an $R$-algebra and $I \subset A$ is an ideal such that $I^{[p]} = 0$, and for every solid commutative square
\[
\begin{tikzcd}[cramped]
	{A/I} & S \\
	A & R
	\arrow[from=1-2, to=1-1]
	\arrow[color={rgb,255:red,214;green,92;blue,92}, dashed, from=1-2, to=2-1]
	\arrow[two heads, from=2-1, to=1-1]
	\arrow[from=2-2, to=1-2]
	\arrow[from=2-2, to=2-1],
\end{tikzcd}
\]
there is a unique dashed arrow $S \to A$ that makes the triangles commute. Here $I^{[p]}$ is the ideal generated by the $p$-th powers of elements of $I$, i.e., $I^{[p]}$ is the expansion of $I$ along the absolute Frobenius of $A$. The notions of \emph{b-nil formally unramified} and \emph{b-nil formally smooth} ring maps are defined analogously. Note that the example of a formally \'etale map that does not have isomorphic relative Frobenius (\autoref{eg:formally-etale-not-b-nil formally-etale}~\autoref{eg:formally-etale-not-b-nil formally-etale.b}) gives an example of a formally \'etale map that is \emph{not} b-nil formally \'etale.  

To prove \autoref{thm:main-thm-intro}, we initiate a study of b-nil formal smoothness drawing inspiration from the work of Gabber \cite{Gabber.tStruc}. We obtain the following characterization.

\begin{theorem}(\autoref{prop:b-nil-formally-smooth-injective-relative-Frobenius-projective}, \autoref{thm:characterizing-b-nil-formal-smoothness})
    \label{thm:intro-b-nil-formal-smoothness}
    Let $\varphi \colon R \to S$ be a map of $\mathbf{F}_p$-algebras. Let $S^{(p)} \coloneqq F_*R \otimes_R S$. Then the following are equivalent:
    \begin{enumerate}
        \item $\varphi$ is b-nil formally smooth.
        \item There exists $\{f_i\}_{i \in I} \subset S$ such that the canonical $S^{(p)}$-algebra map 
        \[\frac{S^{(p)}[T_i \colon i \in I]}{(T_i^p - (1\otimes f_i) \colon i \in I)} \to S\] 
        that acts on $S^{(p)}$ via $F_\varphi$ and sends the class of $T_i$ to $f_i$ admits an $S^{(p)}$-algebra right-inverse. 
    \end{enumerate}
    In particular, if $R \to S$ is b-nil formally smooth, then 
    $S$ is a projective $S^{(p)}-$module via $F_{\varphi}$ and so $F_{\varphi}$ is faithfully flat and has an $S^{(p)}$-linear left inverse.
\end{theorem}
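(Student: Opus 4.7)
The plan is to prove the two directions separately. Direction (2)$\Rightarrow$(1) will be a direct construction using the given section and the Frobenius factorization available in characteristic $p$ when $I^{[p]} = 0$; direction (1)$\Rightarrow$(2) will carry the main technical content.

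For (2)$\Rightarrow$(1), given a lifting datum $(A, I, f\colon S \to A/I)$ with $I^{[p]} = 0$, I exploit that the absolute Frobenius $F_A$ descends to a ring map $\overline F\colon A/I \to A$, $\overline a \mapsto a^p$, which is well-defined precisely because $I^{[p]} = 0$. I would first construct an $R$-algebra map $g\colon S^{(p)} \to A$ by $g(r \otimes s) := \varphi_A(r) \cdot \overline F(f(s))$, then extend to an $R$-algebra map $h\colon B \to A$ by sending $T_i$ to any lift of $f(f_i)$ in $A$ (well-defined since the relation $T_i^p = 1 \otimes f_i$ maps correctly), and finally set $\widetilde f := h \circ \sigma$. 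The $S^{(p)}$-linearity of $\sigma$ gives $\widetilde f(\varphi(r)) = h(\sigma(F_\varphi(r \otimes 1))) = h(r \otimes 1) = \varphi_A(r)$, so $\widetilde f$ is an $R$-algebra map, and $\pi_A \circ \widetilde f = f$ by checking on generators of $B$.

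For (1)$\Rightarrow$(2), I take $\{f_i\}_{i \in I} := S$, so that $\pi\colon B \to S$ is surjective with kernel $J$. The crucial structural observation is that, since $T_s^p = 1 \otimes s \in S^{(p)}$ and Frobenius is additive in characteristic $p$, every $p$-th power in $B$ lies in $S^{(p)} \subset B$; thus $b \in J$ forces $b^p \in S^{(p)} \cap J = \ker F_\varphi$, whence $J^{[p]} \subset (\ker F_\varphi) \cdot B$ as ideals of $B$. Passing to $\overline B := B/(\ker F_\varphi) \cdot B$, the induced surjection $\overline\pi\colon \overline B \twoheadrightarrow S$ has kernel $\overline J$ satisfying $\overline J^{[p]} = 0$, which puts us in position to apply b-nil formal smoothness of $\varphi$.

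The main obstacle is extracting an honest $S^{(p)}$-algebra section of $\pi\colon B \to S$ from this setup. The first difficulty is that $\overline B \to S$ is not an $R$-algebra map for the standard $R$-structure on $S$: composing $R \to S^{(p)} \to \overline B \to S$ gives the Frobenius-twist $r \mapsto \varphi(r)^p$, so $\overline B$ really surjects onto $F_*S$ as $R$-algebras. My plan is to apply b-nil formal smoothness to lift the $R$-algebra Frobenius $F_S\colon S \to F_*S$ to a ring map $\widetilde F\colon S \to \overline B$, and then, using the freeness of $\overline B$ over $S^{(p)}/\ker F_\varphi$ on the basis $\{T^\alpha : 0 \le \alpha_i < p\}$ together with $\overline J^{[p]} = 0$, to modify $\widetilde F$ into a genuine $S^{(p)}$-algebra section $\overline\sigma\colon S \to \overline B$. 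The second difficulty is bridging from $\overline B$ back to $B$: a careful analysis, iterating b-nil formal smoothness through the decreasing chain of ideals $(\ker F_\varphi)^{[p^n]} \cdot B$ (at each stage the successive quotient has vanishing $[p]$-th power, since $(J^{[p^n]})^{[p]} \subset J^{[p^{n+1}]}$), should both produce a lift $\sigma\colon S \to B$ and force $\ker F_\varphi = 0$. Once $\sigma$ exists, $S$ is a direct summand of the free $S^{(p)}$-module $B$, so $S$ is projective over $S^{(p)}$, $F_\varphi$ is faithfully flat, and projection of $B$ onto its $T^0 = 1$-summand gives the $S^{(p)}$-linear left inverse of $F_\varphi$.
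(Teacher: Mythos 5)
Your direction (2)$\Rightarrow$(1) is correct and is essentially the paper's argument: your map $g(r\otimes s)=\varphi_A(r)\cdot\overline F(f(s))$ is exactly the auxiliary map $\tau$ of \autoref{lem:lifting-lemma-Frobenius}, and composing the resulting extension $h$ with the given section is how \autoref{thm:characterizing-b-nil-formal-smoothness} is proved (modulo the routine d\'evissage from $I^{[p]}=0$ to general bounded nil ideals, \autoref{lemma-standard}~\autoref{lem:lemma-standard.7}).

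Direction (1)$\Rightarrow$(2) has genuine gaps. First, the ``Frobenius-twist'' obstruction you flag is an artifact of your choice of structure map: if you give $B=S^{(p)}[T_s]/(T_s^p-(1\otimes s))$ the $R$-algebra structure through the \emph{first} tensor factor, $r\mapsto F_*r\otimes 1$, then $F_\varphi(F_*r\otimes 1)=\varphi(r)$, so $\pi\colon B\to S$ \emph{is} an $R$-algebra surjection onto $S$ with its standard structure; moreover its kernel already satisfies $J^{[p]}=0$ (it is the kernel of the relative Frobenius of $S[T_s]/(T_s^p-s)$, cf.\ \autoref{lemma-pthpowerofkernelofrelfrob} and \autoref{lem:dominating-bounded-nil}), so the passage to $\overline B$ is unnecessary and b-nil formal smoothness can be applied directly to the identity $S\to S$ to produce an $R$-algebra section $g\colon S\to B$. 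Having missed this, your substitute step carries no content: a lift of $F_S$ along $\overline B\twoheadrightarrow F_*S$ exists for \emph{every} $\varphi$ (take $s\mapsto 1\otimes s$, i.e.\ $F_R\otimes_R\id_S$), so applying the hypothesis there extracts nothing, and no purely formal ``modification'' using freeness and $\overline J^{[p]}=0$ can turn a Frobenius lift into a section --- otherwise every map would be b-nil formally smooth. Second, even with an $R$-algebra section $g$ in hand, statement (2) demands an $S^{(p)}$-\emph{algebra} section, and the paper's proof of this (the Claim in \autoref{prop:b-nil-formally-smooth-injective-relative-Frobenius-projective}) hinges on the injectivity of $F_\varphi$, which is itself a separate nontrivial step proved by iterating the $S'$-construction of \autoref{lem:dominating-bounded-nil} (using its property that $\ker F_{S'/R}$ equals the kernel of $S'^{(p)}\to S^{(p)}$) and applying the inverse-limit form of the lifting property. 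Your proposal never establishes injectivity: iterating lifts through the chain $(\ker F_\varphi)^{[p^n]}B$ only produces a map into the completion $\lim_n B/(\ker F_\varphi)^{[p^n]}B$, not into $B$, and you give no mechanism that would ``force $\ker F_\varphi=0$.'' These two missing ingredients --- a genuine source for an $S^{(p)}$-algebra section and a proof of injectivity of $F_\varphi$ --- are exactly where the content of (1)$\Rightarrow$(2) lies.
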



\noindent This result is reminiscent of the well-known fact that a formally smooth map of Noetherian rings in characteristic $p$ has faithfully flat relative Frobenius (see \autoref{thm:Radu-Andre}). \autoref{thm:intro-b-nil-formal-smoothness} also indicates that b-nil formal smoothness of $\varphi \colon R \to S$ is `close' to injectivity of $F_\varphi$ and $S$ having an $p$-basis over $R$. In fact, by a straightforward application of \autoref{thm:intro-b-nil-formal-smoothness}, we deduce that these last two conditions imply b-nil formal smoothness of $\varphi$ (\autoref{cor:p-basis-implies-bnil-formally-smooth}). This improves \cite[$0_{\text{IV}}$,~Th\'eor\`eme~21.2.7]{EGAIV_I}, which shows that $\varphi$ is formally smooth assuming injectivity of $F_\varphi$ and the existence of a $p$-basis of $S$ over $R$. Furthermore, \autoref{thm:intro-b-nil-formal-smoothness} allows us to identify a large class of maps for which the formally smooth and b-nil formally smooth properties coincide.

\begin{theorem}(\autoref{thm:b-nil-formally-smooth-fp-relFrob})
\label{thm:intro-b-nil-formally-smooth-fp-relFrob}
    Let $\varphi \colon R \to S$ be a map of $\mathbf{F}_p$-algebras such that $F_\varphi$ is finitely presented (for e.g., if $F_R$ is finite and $F_S$ is finitely presented). Then $\varphi$ is b-nil formally smooth if and only if $\varphi$ is formally smooth.
\end{theorem}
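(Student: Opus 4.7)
The plan is to prove the biconditional by handling the two implications separately. The direction b-nil formally smooth $\Rightarrow$ formally smooth is immediate: for any ideal $I$ with $I^2 = 0$ one has $I^{[p]} \subseteq I^p \subseteq I^2 = 0$, so the b-nil lifting property specializes to the square-zero lifting property.

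For the converse, assume $\varphi$ is formally smooth and $F_\varphi$ is finitely presented. By \autoref{thm:intro-b-nil-formal-smoothness}, it suffices to produce $\{f_i\} \subset S$ and an $S^{(p)}$-algebra section of the canonical surjection $B := S^{(p)}[T_i]/(T_i^p - 1 \otimes f_i) \twoheadrightarrow S$. By the Radu-Andr\'e theorem (\autoref{thm:Radu-Andre}), $F_\varphi$ is faithfully flat, hence injective. Finite presentation lets us choose finitely many $f_1, \ldots, f_n \in S$ generating $F_*S$ as an $S^{(p)}$-algebra; since each $f_i$ satisfies $f_i^p \in S^{(p)}$, the algebra $F_*S$ is integral and finite over $S^{(p)}$, and in fact finitely presented as an $S^{(p)}$-module. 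The resulting surjection $B \twoheadrightarrow S$ expresses $B$ as a free $S^{(p)}$-module of rank $p^n$ with basis $\{T^\alpha : 0 \le \alpha_i < p\}$, so the kernel $K$ of $B \twoheadrightarrow S$ is a finitely generated $S^{(p)}$-module, and hence a finitely generated ideal of $B$.

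A key observation is that the relation $T_i^p = 1 \otimes f_i$ forces any $p$-th power in $B$ to lie in $S^{(p)}$. For $k \in K$, one has $F_\varphi(k^p) = (k \bmod K)^p = 0$, so injectivity of $F_\varphi$ gives $k^p = 0$ and thus $K^{[p]} = 0$. A pigeonhole argument (if $K = (k_1, \ldots, k_N)$ with $k_i^p = 0$, any product of $N(p-1)+1$ of the generators repeats some $k_i$ at least $p$ times) shows $K^{N(p-1)+1} = 0$, so $K$ is nilpotent.

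Now the substantive use of formal smoothness: iteratively lifting along the filtration $K \supseteq K^2 \supseteq \cdots$ (each successive quotient being square-zero) extends $\id_S$ to an $R$-algebra section $\sigma \colon S \to B$. It remains to verify $\sigma$ is automatically $S^{(p)}$-linear, which is the subtle point. For any $s \in S$, $\sigma(s)^p \in S^{(p)}$ and $F_\varphi(\sigma(s)^p) = s^p = F_\varphi(1 \otimes s)$, so by injectivity $\sigma(s)^p = 1 \otimes s$ in $B$. For $c = \sum r_j \otimes s_j \in S^{(p)}$, the computation
\[
\sigma(F_\varphi(c)) = \sum \sigma(\varphi(r_j))\, \sigma(s_j)^p = \sum (r_j \otimes 1)(1 \otimes s_j) = c \quad \text{in } B
\]
shows $\sigma$ commutes with the structure maps $S^{(p)} \to S$ (namely $F_\varphi$) and $S^{(p)} \to B$ (the natural inclusion), confirming $S^{(p)}$-linearity. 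The main obstacle I anticipate is verifying that $K$ is finitely generated in sufficient generality, which is precisely where the finite presentation hypothesis on $F_\varphi$ enters.
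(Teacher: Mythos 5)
There is a genuine gap at the crux of your argument: the injectivity of $F_\varphi$. You deduce it from ``the Radu--Andr\'e theorem (\autoref{thm:Radu-Andre}), $F_\varphi$ is faithfully flat,'' but that theorem is a statement about maps of \emph{Noetherian} $\mathbf{F}_p$-algebras, and no Noetherian hypothesis is available here --- the statement only assumes $F_\varphi$ is finitely presented, and $R$, $S$ may be arbitrary (e.g.\ huge perfect rings). Nor can injectivity be extracted from formal smoothness alone: \autoref{eg:formally-etale-not-b-nil formally-etale}~\autoref{eg:formally-etale-not-b-nil formally-etale.b} exhibits a formally \'etale (hence formally smooth) map whose relative Frobenius is not injective, so the finite presentation hypothesis must genuinely enter the proof of injectivity. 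The paper supplies exactly this missing step in \autoref{prop:formal-smoothness-fp-relFrob-injective-projective}: one builds the tower of surjections $\cdots \to A_{n+1}\to A_n \to \cdots \to A_0=S$ from the finitely presented case of \autoref{lem:dominating-bounded-nil}, where the kernels are \emph{nilpotent} (this is where finite presentation is used), so that formal smoothness lets one lift to the inverse limit, and injectivity of $F_\varphi$ then follows from property \autoref{lem:dominating-bounded-nil.c} of that lemma. Without injectivity your later steps collapse: the identity $\sigma(s)^p = 1\otimes s$, and hence the $S^{(p)}$-linearity of your section $\sigma$, is proved precisely by cancelling $F_\varphi$, so the application of \autoref{thm:characterizing-b-nil-formal-smoothness} is not justified.

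A secondary, repairable point: you also invoke injectivity of $F_\varphi$ to get $K^{[p]}=0$ for the kernel $K$ of $B=S^{(p)}[T_i]/(T_i^p-1\otimes f_i)\twoheadrightarrow S$, but this holds unconditionally, since that surjection is identified with (the corestriction of) the relative Frobenius of $R \to S[T_i]/(T_i^p-f_i)$ and kernels of relative Frobenii are killed by $[p]$ (\autoref{lemma-pthpowerofkernelofrelfrob}, as in the proof of \autoref{lem:dominating-bounded-nil}). Apart from the injectivity issue, the remainder of your argument --- finitely generated nilpotent kernel, iterated square-zero lifting to get an $R$-algebra section, verification that the section is an $S^{(p)}$-algebra map, and the appeal to the characterization theorem --- follows the paper's route (\autoref{prop:b-nil-formally-smooth-injective-relative-Frobenius-projective} and \autoref{thm:characterizing-b-nil-formal-smoothness}) correctly; what is missing is the paper's tower argument establishing injectivity, which is the substantive content of the finitely presented hypothesis.
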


Another consequence of \autoref{thm:intro-b-nil-formal-smoothness} is that if $R$ has universally injective absolute Frobenius (i.e. $R$ is \emph{$F$-pure}) and $R \to S$ is b-nil formally smooth, then $S$ is reduced (\autoref{cor-geometricallyreduced}~\autoref{cor-geometricallyreduced.a}). Since the b-nil formally smooth property is preserved under base change, this shows that any b-nil formally smooth algebra over a field of prime characteristic is geometrically reduced (\autoref{cor-geometricallyreduced}~\autoref{cor-geometricallyreduced.b}). This is in stark contrast with Bhatt's example of an $\mathbf{F}_p$-algebra with trivial cotangent complex, hence formally \'etale over $\mathbf{F}_p$, that is \emph{not} reduced \cite{Bhatt_imperfect_trivial_cc}, exhibiting a further difference between the formally \'etale and b-nil formally \'etale properties. Our reducedness result combined with Bhatt's example shows that the triviality of the cotangent complex of a map of $\mathbf{F}_p$-algebras does not imply its b-nil formal \'etaleness. Furthermore, b-nil formally \'etale maps, like formally \'etale ones, need not have trivial cotangent complex; see \autoref{eg:formally-etale-not-b-nil formally-etale}~\autoref{eg:formally-etale-not-b-nil formally-etale.c}. In \autoref{sec:b-nil-formally-smooth-flat-Frobenius} we also examine the relationship between b-nil formally smooth maps in prime characteristic and some enhancements of rings with flat absolute Frobenius that appear in the theories of $F$-singularities and tight closure \cite{HochsterHunekeFRegularityTestElementsBaseChange, KatzmanParameterTestIdealOfCMRings, KatzmanLyubeznikZhangOnDiscretenessAndRationality, SharpAnExcellentFPureRing, SharpBigTestElements, DESTate}. \autoref{thm:intro-b-nil-formally-smooth-fp-relFrob} allows us to improve results from \cite{DESTate} on when these enhancements of flat Frobenius ascend along `nice' maps.  

The divergence of the formally \'etale and b-nil formally \'etale properties naturally leads us to examine situations where they agree in prime characteristic. Several positive results have long been known, mainly following from the fact that the relative Frobenius of a weakly \'etale map of $\mathbf{F}_p$-algebras is an isomorphism; see \autoref{sec:Section-3} for more details. We record two additional cases that may be of interest. The first is the analog of \autoref{thm:intro-b-nil-formally-smooth-fp-relFrob} for the (b-nil) formally \'etale property.

 \begin{theorem}(\autoref{thm:formally-etale-F-finite})
     \label{thm:F-finite-case}
     Let $\varphi \colon R \to S$ be a map of $\mathbf{F}_p$-algebras such that $F_\varphi$ is finitely presented. Then $\varphi$ is b-nil formally \'etale if and only if $\varphi$ is formally \'etale.
 \end{theorem}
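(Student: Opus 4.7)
The plan is to reduce the theorem to \autoref{thm:main-thm-intro}, which characterizes b-nil formal \'etaleness as the isomorphism of the relative Frobenius $F_\varphi$. The implication b-nil formally \'etale $\Rightarrow$ formally \'etale is formal and requires no finite presentation hypothesis: any square-zero ideal $I$ in an $\mathbf{F}_p$-algebra satisfies $I^{[p]} = 0$, so b-nil formal \'etaleness is a strictly stronger lifting property. So the work lies in the reverse implication: assuming $\varphi$ is formally \'etale with $F_\varphi$ finitely presented, I need to show $F_\varphi$ is an isomorphism.

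First, I would leverage that $\varphi$ is formally smooth. Since $F_\varphi$ is finitely presented, \autoref{thm:intro-b-nil-formally-smooth-fp-relFrob} upgrades this to b-nil formal smoothness, and then \autoref{thm:intro-b-nil-formal-smoothness} tells me $F_\varphi$ is faithfully flat, in particular injective. Next, formal unramifiedness of $\varphi$ gives $\Omega_{S/R} = 0$; the standard cotangent right-exact sequence for the composition $R \to S^{(p)} \xrightarrow{F_\varphi} S$ exhibits $\Omega_{S/S^{(p)}}$ as a quotient of $\Omega_{S/R}$, so $\Omega_{S/S^{(p)}} = 0$, i.e., $F_\varphi$ is formally unramified. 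Combining flatness, formal unramifiedness, and finite presentation, $F_\varphi$ is an \'etale ring map.

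The step I expect to require the most care is going from ``\'etale'' to ``isomorphism'', since faithfully flat \'etale maps are far from being isomorphisms in general. My strategy is to apply the classical fact --- noted in \autoref{sec:Section-3} as the root of most unconditional positive comparisons --- that the relative Frobenius of a weakly \'etale map of $\mathbf{F}_p$-algebras is an isomorphism. Applied to the \'etale (hence weakly \'etale) map $F_\varphi$ itself, this yields that $F_{F_\varphi} \colon F_*S^{(p)} \otimes_{S^{(p)}} S \to F_*S$ is an isomorphism.

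Finally, I would exploit that $F_\varphi$ is already a relative Frobenius to turn this into surjectivity of $F_\varphi$. Given $s \in S$, surjectivity of $F_{F_\varphi}$ yields an expression $s = \sum F_\varphi(a_i)\, b_i^p$ with $a_i \in S^{(p)}$ and $b_i \in S$. Each $F_\varphi(a_i)$ lies in $\im F_\varphi$ by definition, while $b_i^p = F_\varphi(1 \otimes b_i) \in \im F_\varphi$ because $1 \otimes b_i \in S^{(p)} = F_*R \otimes_R S$. Since $\im F_\varphi$ is a subring of $S$, it follows that $s \in \im F_\varphi$, so $F_\varphi$ is surjective. Combined with injectivity from faithful flatness, $F_\varphi$ is an isomorphism, and \autoref{thm:main-thm-intro} concludes that $\varphi$ is b-nil formally \'etale.
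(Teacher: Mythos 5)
Your proof is correct, but it ends differently from the paper's. The paper's proof of \autoref{thm:formally-etale-F-finite} gets \'etaleness of $F_\varphi$ very cheaply (formal \'etaleness of $F_\varphi$ follows from the cancellation $F_*\varphi = F_\varphi \circ (\id_{F_*R}\otimes\varphi)$, and finite presentation then gives \'etale), and concludes geometrically: $\Spec(F_\varphi)$ is a universal homeomorphism, hence universally injective, and an \'etale radicial morphism is an open immersion, so $F_\varphi$ is an isomorphism --- injectivity and surjectivity come out simultaneously. You instead assemble \'etaleness from flatness (via \autoref{thm:b-nil-formally-smooth-fp-relFrob} and \autoref{prop:b-nil-formally-smooth-injective-relative-Frobenius-projective}, which is heavier than needed but valid) plus formal unramifiedness from the cotangent sequence, and then finish algebraically: you apply the known fact that a weakly \'etale map of $\mathbf{F}_p$-algebras has isomorphic relative Frobenius to $F_\varphi$ itself, and bootstrap surjectivity of $F_\varphi$ from surjectivity of $F_{F_\varphi}$ using that $\im(F_\varphi)$ is a subring containing $\varphi(R)$ and all $p$-th powers $b^p = F_\varphi(1\otimes b)$; injectivity you must supply separately, via faithful flatness. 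Both endgames rest on an external standard result (the paper on the radicial-implies-open-immersion tag, you on the SGA5/weakly-\'etale Frobenius isomorphism, which the paper quotes in Section 4); your route is a nice ``Frobenius one level up'' trick that avoids the open-immersion argument, at the cost of invoking a result whose classical proof is essentially that same radicial argument, and of using the paper's structural b-nil formal smoothness machinery just to get injectivity.
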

 \autoref{rem:following-bnil-formally-etale-Ffinite} shows that one cannot weaken the hypothesis of finite presentation of $F_\varphi$ to finite type in both \autoref{thm:intro-b-nil-formally-smooth-fp-relFrob} and \autoref{thm:F-finite-case}. The proof of \autoref{thm:F-finite-case} is similar to the classical proof that for a  finitely presented ring map, isomorphism of the relative Frobenius characterizes the \'etale property \cite[Expos\'e~XV, $n^0~2$, Proposition~2~c)]{SGA5}. Indeed, the hypotheses of \autoref{thm:F-finite-case} imply that $F_\varphi$ is \'etale. The usual reasoning that an \'etale radicial morphism of schemes is an open immersion then implies $\Spec(F_\varphi)$ is an isomorphism of affine schemes, i.e., $F_\varphi$ is an isomorphism of rings.

 Our second result, in the Noetherian setting, is the following unconditional equivalence.

\begin{theorem}(\autoref{thm:formally-etale-Noetherian-relative-Frobenius-iso})
\label{thm:Noetherian-main-thm-intro}
    Let $\varphi \colon R \to S$ be a map of Noetherian $\mathbf{F}_p$-algebras. Then $\varphi$ is b-nil formally \'etale if and only if $\varphi$ is formally \'etale.
\end{theorem}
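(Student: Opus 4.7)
The forward direction is immediate: for $p\geq 2$, $I^2 = 0$ implies $I^{[p]}\subseteq I^2 = 0$, so the b-nil lifting property is strictly stronger than the ordinary one, and b-nil formally \'etale implies formally \'etale.

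For the converse, my plan is to invoke \autoref{thm:main-thm-intro}, which reduces the statement ``$\varphi$ is b-nil formally \'etale'' to ``$F_\varphi\colon S^{(p)}\to F_*S$ is an isomorphism.'' The goal thus becomes: assuming $\varphi$ is formally \'etale with $R,S$ Noetherian, show $F_\varphi$ is an isomorphism. Since $\varphi$ is formally smooth, the Radu-Andr\'e theorem (\autoref{thm:Radu-Andre}) applies in the Noetherian setting and yields that $F_\varphi$ is faithfully flat, so in particular injective. It therefore remains to show $F_\varphi$ is surjective.

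For surjectivity, consider the multiplication map $\mu\colon F_*S\otimes_{S^{(p)}}F_*S\to F_*S$ and its kernel $J$. From formal unramifiedness, $J/J^2 = \Omega^1_{F_*S/S^{(p)}} = \Omega^1_{S/R} = 0$, where the first identification uses that $S^{(p)}$-derivations of $F_*S$ coincide with $R$-derivations of $S$ (since $d(s^p) = 0$ automatically in characteristic $p$, the condition of killing $1\otimes s\in S^{(p)}$ adds no constraint beyond killing $R$). Hence $J = J^2$. Moreover $J$ is generated as an ideal by the elements $s\otimes 1 - 1\otimes s$ for $s\in S$, and each such generator has $p$-th power zero in $F_*S\otimes_{S^{(p)}}F_*S$: indeed $(s\otimes 1)^p = s^p\otimes 1 = 1\otimes s^p = (1\otimes s)^p$, using that $s^p$ lies in the image of $1\otimes s\in S^{(p)}$ under $F_\varphi$. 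Since Frobenius is additive in characteristic $p$, this forces $J^{[p]} = 0$.

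The main obstacle is to deduce $J = 0$ from the conjunction of $J = J^2$, $J^{[p]} = 0$, and faithful flatness of $F_\varphi$; this is where Noetherianity of $R$ and $S$ should crucially enter (one cannot use a naive Nakayama argument directly, since $F_*S\otimes_{S^{(p)}}F_*S$ need not be Noetherian without an F-finiteness hypothesis). I anticipate this step via reduction to the local case: localizing $\varphi$ at a prime $\mathfrak{q}\subset S$ produces a formally \'etale map of Noetherian local rings $R_\mathfrak{p}\to S_\mathfrak{q}$ whose fiber over $k(\mathfrak{p})$ must be a formally \'etale field extension of $k(\mathfrak{p})$ (using that Noetherian local FE maps have unramified fibers, hence fiber a field); by \autoref{thm:formally-etale-fields} this fiber extension has isomorphic relative Frobenius, and a Nakayama-plus-faithful-flatness argument should patch these fiberwise isomorphisms of $F_\varphi$ to a global isomorphism. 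Once $F_\varphi$ is established as an isomorphism, \autoref{thm:main-thm-intro} delivers the b-nil formally \'etale property.
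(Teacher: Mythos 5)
Your skeleton matches the paper up to the point where the real work starts: the forward direction is the trivial one, the reduction of ``b-nil formally \'etale'' to ``$F_\varphi$ is an isomorphism'' via \autoref{thm:main-thm-intro} is exactly the paper's reduction, and injectivity of $F_\varphi$ via Grothendieck (formally smooth $\Rightarrow$ regular) plus Radu--Andr\'e is also the paper's argument. The gap is surjectivity, which is the actual content of the theorem, and your proposal does not close it. Your observations that the kernel $J$ of $F_*S\otimes_{S^{(p)}}F_*S\to F_*S$ satisfies $J=J^2$ and $J^{[p]}=0$ are correct, but nothing abstract can finish from there: the paper's own \autoref{eg:formally-etale-not-b-nil formally-etale}~\autoref{eg:formally-etale-not-b-nil formally-etale.b} exhibits a nonzero ideal with $I=I^2$ and $I^{[p]}=0$, and, as you yourself note, $F_*S\otimes_{S^{(p)}}F_*S$ need not be Noetherian, so no Nakayama-type argument applies to $J$. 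Your fallback sketch then rests on two unproved steps. First, ``formally unramified fiber over $k(\mathfrak p)$, hence a field'' is not a standard fact outside the finite-type setting; vanishing of $\Omega$ over a field does not obviously force a Noetherian local algebra to be a field (the paper only proves the much more special \autoref{lem:formally-unramified-field-extension-trivial}, for field extensions $L/K$ with $L^p\subset K$, and the related surjectivity statement it cites in \autoref{rem:concluding-remarks-main-thm-Noetherian} is a nontrivial theorem of Andr\'e and Ma--Polstra). Second, the ``patching'' step fails as stated: knowing $F_\varphi\otimes_R k(\mathfrak p)$ is an isomorphism for every prime $\mathfrak p$ only tells you the cokernel $C$ of $F_\varphi$ has $C\otimes_R k(\mathfrak p)=0$ for all $\mathfrak p$, which does not imply $C=0$ without finite generation (e.g. $C=\mathbf{F}_p(t)/\mathbf{F}_p[t]$ over $R=\mathbf{F}_p[t]$ has all fibers zero); Nakayama needs exactly the $F$-finiteness you do not have.

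For comparison, the paper gets surjectivity by a different two-step route. For reduced $S$ it does not look at all fibers over $\operatorname{Spec}(R)$ but at the finitely many minimal primes of $S$: localizing the purely inseparable, faithfully flat, formally \'etale inclusion $R[S^p]\hookrightarrow S$ there gives formally unramified field extensions with $p$-th powers contained in the subfield, which are trivial by \autoref{lem:formally-unramified-field-extension-trivial}; hence the two rings have the same total ring of fractions, and \autoref{lem:cyclically-pure-birational-extension} (using $IS\cap R[S^p]=I$ from faithful flatness) upgrades this to $R[S^p]=S$. The general case is then a deformation argument: kill the nilradical of $R$ (nilpotent, since $R$ is Noetherian), apply the reduced case to $\varphi_{\red}$, and conclude surjectivity of $F_\varphi$ by the nilpotent Nakayama lemma, which, unlike the fiberwise patching you propose, requires no finiteness of the cokernel. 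If you want to salvage your plan, you need either a proof that $J=J^2$, $J^{[p]}=0$ and Noetherianity of $R,S$ force $J=0$, or a replacement for the patching step along the lines of the paper's reduction to minimal primes plus deformation.
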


\noindent Given \autoref{thm:main-thm-intro} and the fact that b-nil formally \'etale implies formally \'etale, the main assertion of \autoref{thm:Noetherian-main-thm-intro} is that a formally \'etale map of Noetherian rings has isomorphic relative Frobenius. We were unable to locate an elementary proof of this result in the literature, although it is presumably well-known to experts\footnote{After the first version of our paper was posted to arXiv, we learned that Javier Carvajal-Rojas and Axel St\"abler had independently proved this result \cite{carvajalrojas2025kunztypetheoremformalunramification} using a result of Andr\'e and Ma--Polstra on surjectivity of formally unramified maps of Noetherian $\mathbf{F}_p$-algebras that are invertible up to a power of Frobenius \cite[Theorem~11.2]{MaPolstraFSing}; see \autoref{rem:concluding-remarks-main-thm-Noetherian}.}.  In \cite[Proposition 3.5.6]{LurieEllipticII}, Lurie uses the machinery of \cite{LurieSpectralAlgebraicGeometry} to show that if the $\mathbb{E}_\infty$-cotangent complex of a map of Noetherian $\mathbf{F}_p$-algebras is trivial, then the relative Frobenius is an isomorphism. 
Additionally, formally \'etale implies b-nil formally \'etale for Noetherian $F$-finite rings by \autoref{thm:F-finite-case}, or by recent work of Fink \cite[Theorem~5.3]{FinkRelativelyPerfect}\footnote{Fink studies the relative Frobenius of maps of animated $\mathbf{F}_p$-algebras, and his proof that a formally \'etale map of classical $F$-finite Noetherian $\mathbf{F}_p$-algebras has isomorphic relative Frobenius appears to use this derived perspective.}. 

The first ingredient in our proof of \autoref{thm:Noetherian-main-thm-intro} is that a formally smooth map of Noetherian rings is flat with geometrically regular fibers, i.e., a \emph{regular} map. This result is due to Grothendieck \cite{EGAIV_I}. The second ingredient, due to Radu \cite{RaduUneClasseDAnneaux} and Andr\'e \cite{AndreHomomorphismsRegulariers}, is that a map of Noetherian rings of characteristic $p$ is regular if and only if its relative Frobenius is faithfully flat. This already implies the injectivity of $F_\varphi$ assuming that $\varphi$ is formally \'etale. We prove surjectivity first in the case where $R$ or $S$ is reduced (\autoref{prop:formally-etale-Noetherian-reduced}) and then get the general result by a deformation argument. 

It would be interesting to give a proof of \autoref{thm:Noetherian-main-thm-intro} that uses just the Noetherian property to show that formally \'etale maps are b-nil formally \'etale directly via the definitions of these notions. We have been unsuccessful in finding an argument that works for arbitrary Noetherian rings. However, when the Noetherian rings are additionally $F$-finite, see \autoref{prop:equivalences-b-nil-F-finite-case} which offers another perspective on why the various formal properties coincide.  Finally, in \autoref{sec:formal-smooth-completion} we study the relationship between (b-nil) formal smoothness/\'etaleness and regularity of the completion map of a Noetherian ring with respect to an arbitrary ideal. If $(R,\m)$ is an excellent local ring, then the completion map $R \to \widehat{R}$ is regular, or equivalently, ind-smooth by N\'eron-Popescu desingularization. However, $R \to \widehat{R}$ fails to be formally smooth even in very simple examples; see \autoref{rem:very-rare-char0} for a characteristic $0$ example and \autoref{rem:infinite-field-char-p-not-formally-smooth} for a characteristic $p$ one. This also clarifies a common misconception that ind-smooth implies formally smooth (ind-formally \'etale implies formally \'etale which is perhaps why this misconception exists). However, in prime characteristic we prove the somewhat surprising fact that ideal adic completion being a regular map is equivalent to its (b-nil) formal smoothness/\'etaleness with an appropriate $F$-finiteness assumption that is common in geometric situations. Our result, which appears below, is just one among many others that illustrate the special nature of $F$-finite Noetherian rings. Some salient examples are Kunz's result that any $F$-finite Noetherian ring is excellent of finite Krull dimension \cite{KunzOnNoetherianRingsOfCharP}, and Gabber's result that any $F$-finite Noetherian ring admits a dualizing complex because it is a homomorphic image of an $F$-finite regular ring \cite{Gabber.tStruc}.

\begin{proposition}(\autoref{prop:completionforF-finiteGring})
\label{prop:intro-completionforF-finiteGring}
    Let $R$ be a Noetherian ring of prime characteristic $p > 0$. Let $I$ be an ideal of $R$ such that $R/I$ is $F$-finite. Let $\widehat{R}$ denote the $I$-adic completion of $R$. Then we have the following:
    \begin{enumerate}
    \item $F_{\widehat{R}/R}$ is surjective, and hence, $R \to \widehat{R}$ is b-nil formally unramified.
    \item  The following are equivalent:
        \begin{enumerate}[(i)]
        \item $R \to \widehat{R}$ is a regular map.
        \item $R \to \widehat{R}$ is b-nil formally \'etale.
        \item $R \to \widehat{R}$ is b-nil formally smooth.
        \item $R \to \widehat{R}$ is formally smooth.
        \item $R \to \widehat{R}$ is formally \'etale.
        \end{enumerate}
    \end{enumerate}
\end{proposition}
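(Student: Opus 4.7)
The plan is to prove (1) first as the main technical step, and then deduce (2) by combining it with \autoref{thm:main-thm-intro}, \autoref{thm:Noetherian-main-thm-intro}, and the Radu--Andr\'e criterion (\autoref{thm:Radu-Andre}). For (1), I aim to show that $F_{\widehat{R}/R}$ is surjective. Fix lifts $x_1, \ldots, x_k \in R$ of $F$-generators of $R/I$. A preliminary observation is that $R/I^n$ is $F$-finite with the same generators for every $n \geq 1$: a Nakayama argument applied to the nilpotent ideal $I/I^n \subset R/I^n$ lifts $F$-finiteness from $R/I$ along the surjection $R/I^n \twoheadrightarrow R/I$. Consequently, for any $\widehat{r} \in \widehat{R}$ and any $n$, one can find $a_i^{(n)} \in R$ with $\widehat{r} \equiv \sum_i x_i (a_i^{(n)})^p \pmod{I^n \widehat{R}}$.

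The main obstacle is assembling these approximations into a Cauchy sequence so that $\widehat{a}_i := \lim_n a_i^{(n)}$ exists in $\widehat{R}$ with $\widehat{r} = \sum_i x_i \widehat{a}_i^p$. I plan to proceed by successive approximation, updating $a_i^{(n+1)} = a_i^{(n)} + \delta_i^{(n)}$ with $\delta_i^{(n)}$ in a suitable power of $I$. The characteristic-$p$ identity $(a+b)^p = a^p + b^p$ ensures that the update modifies $\sum x_i (a_i^{(n)})^p$ by exactly $\sum x_i (\delta_i^{(n)})^p$, so by arranging the $I$-adic depths carefully one corrects the defect $\widehat{r} - \sum x_i (a_i^{(n)})^p \in I^n \widehat{R}$ to the next filtration level while keeping $\delta_i^{(n)}$ small enough for $I$-adic convergence. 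Equivalently, the whole argument amounts to verifying that $\widehat{R}$ is $F$-finite with generators $x_1, \ldots, x_k \in R$, an $I$-adic lift of $F$-finiteness from the $R/I^n$'s; this is the technical heart of the proposition. Once $F_{\widehat{R}/R}$ is surjective, the b-nil formally unramified property follows quickly: two $R$-algebra lifts $f, g \colon \widehat{R} \to A$ of the same reduction modulo $I$ with $I^{[p]} = 0$ satisfy $f(s^p) - g(s^p) = (f(s) - g(s))^p \in I^{[p]} = 0$ for every $s \in \widehat{R}$, so $f$ and $g$ agree on $\widehat{R}^p$; combined with $f|_{\varphi(R)} = g|_{\varphi(R)}$ and the fact that $\varphi(R) \cdot \widehat{R}^p = \widehat{R}$, this forces $f = g$.

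For (2), I chain the equivalences using the b-nil formally unramified property from (1). The definitions immediately give (ii) $\iff$ (iii) and (iv) $\iff$ (v) once $\varphi$ is known to be formally unramified, which follows from b-nil formal unramifiedness since $I^2 = 0 \Rightarrow I^{[p]} = 0$. Next, \autoref{thm:Noetherian-main-thm-intro} gives (ii) $\iff$ (v). Finally, Radu--Andr\'e identifies (i) with faithful flatness of $F_{\widehat{R}/R}$; combining this with the surjectivity from (1), $F_{\widehat{R}/R}$ becomes an isomorphism, which by \autoref{thm:main-thm-intro} is precisely (ii). Conversely, (ii) forces $F_{\widehat{R}/R}$ to be an isomorphism via \autoref{thm:main-thm-intro}, hence faithfully flat, yielding (i) by Radu--Andr\'e.
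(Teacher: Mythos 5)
Your part (2) and your deduction of b-nil formal unramifiedness from surjectivity of $F_{\widehat{R}/R}$ are fine (they run essentially parallel to the paper, which routes the equivalences through \autoref{prop:equivalence-completions-arbitrary}, \autoref{thm:formally-etale-Noetherian-relative-Frobenius-iso} and \autoref{thm:Radu-Andre}). The genuine gap is in the technical heart of part (1): the ``preliminary observation'' that $R/I^n$ is $F$-finite \emph{with the same generators} as $R/I$, and the resulting congruence $\widehat{r} \equiv \sum_i x_i (a_i^{(n)})^p \pmod{I^n\widehat{R}}$, are false. The point is that in the Nakayama step the ideal $I$ acts on $F_*(R/I^n)$ through $I^{[p]}$, so the relevant reduction of $F_*(R/I^n)$ is $F_*\bigl(R/(I^{[p]}+I^n)\bigr)$, \emph{not} $F_*(R/I)$; generators of the smaller quotient $F_*(R/I)$ need not lift to generators. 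Concretely, take $R = k[t]$ with $k$ perfect and $I = (t)$: then $F_*(R/I) = F_*k$ is generated over $k$ by $1$, so your recipe gives $x_1 = 1$, but $t$ is not congruent to any $p$-th power modulo $t^2$, so already the case $n=2$ of your claimed congruence fails (and the inductive correction step gets stuck for the same reason: a defect in $I\widehat{R}$ cannot in general be matched by $\sum_i x_i\delta_i^p$ modulo $I^2\widehat{R}$).

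The fix is to replace your generating set by lifts $y_1,\dots,y_m \in R$ of module generators of $F_*(R/I^{[p]})$ over $R/I$ (in the example: $1,t,\dots,t^{p-1}$). That this module is finite is itself a point you must address: it holds because $R/I^{[p]}$ has the same reduction as $R/I$, hence is $F$-finite, and its Frobenius factors through $R/I$ --- this is exactly the argument in the paper's proof of \autoref{prop:completionforF-finiteGring}~(1) (the $F$-finiteness of $\widehat{R}$). With these generators, your successive approximation is precisely the topological Nakayama argument (\cite[\href{https://stacks.math.columbia.edu/tag/031D}{Tag 031D}]{stacks-project}) applied to the $\widehat{R}$-module $F_*\widehat{R}$, using that $F_*\widehat{R}$ is $I$-adically complete and separated; since the $y_j$ lie in $R$, the resulting expression $\widehat{r} = \sum_j y_j \widehat{a}_j^{\,p}$ exhibits surjectivity of $F_{\widehat{R}/R}$ directly. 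Note this is a genuinely more self-contained route than the paper's, which instead deduces $F$-finiteness of $\widehat{R}$, identifies $F_{\widehat{R}/R}$ with the completion map of $\widehat{R}^{(p)}$ (\autoref{lem:relative-Frobenius-completion}), and then cites the fact that a finite completion map along a finitely generated ideal is surjective. So your strategy is salvageable and even attractive, but as written the key congruence is wrong and the finiteness of $F_*(R/I^{[p]})$ over $R/I$ is never established.
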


We also establish a non-Noetherian analog of \autoref{prop:intro-completionforF-finiteGring} for completions of arbitrary rings with respect to finitely generated ideals and show that for prime characteristic Noetherian rings $R$, even when $R/I$ is \emph{not} $F$-finite, the properties b-nil formally smooth, b-nil formally \'etale, formally smooth and formally \'etale coincide for the $I$-adic completion map $R \to \widehat{R}$; see \autoref{prop:equivalence-completions-arbitrary}. However, the point is that without the $F$-finiteness of $R/I$, $R \to \widehat{R}$ being regular need not imply these nice formal properties.


\section{Preliminaries}
When we use the terms formally smooth/\'etale/unramified we always mean with respect to the discrete topology. This is what Matsumura calls $0$-smooth/\'etale/unramified in \cite{MatsumuraCommutativeRingTheory}.

Fix a prime number $p$. For an $\mathbf{F}_p$-algebra $R$ we have the \emph{absolute Frobenius endomorphism} of $R$
\begin{align*}
    F_R \colon R &\to R\\
    r &\mapsto r^p
\end{align*}
and its iterates $F^e_R$, for any integer $e > 0$. One often considers the target copy of $R$ as an $R$-module via restriction of scalars along $F^e_R$, in which case it is common to denote the resulting $R$-algebra by $F^e_*R$ (some authors choose $\prescript{(e)}{}{R}$ or even $R^{1/p^e}$). An $\mathbf{F}_p$-algebra $R$ is \emph{$F$-pure} if $F_R$ is a universally injective ring map, it is \emph{$F$-finite} if $F_R$ is finite, it is \emph{semi-perfect} if $F_R$ is surjective, and it is \emph{perfect} if $F_R$ is an isomorphism.

If $\varphi \colon R \to S$ is a map of $\mathbf{F}_p$-algebras, then one also has a relative version of the $e$-th absolute Frobenius called the \emph{$e$-th relative Frobenius}, denoted $F^e_\varphi$, which is the unique ring map $F^e_*R \otimes_R S \to F^e_*S$ that makes the following diagram commute:
 \[
   \begin{tikzcd}[cramped]
	R && S \\
	{F^e_*R} && {F^e_*R\otimes_RS} \\
	&&& {F^e_*S}
	\arrow["\varphi", from=1-1, to=1-3]
	\arrow["{F^e_R}"', from=1-1, to=2-1]
	\arrow["{F^e_R\otimes_R\id_S}", from=1-3, to=2-3]
	\arrow["{F^e_S}", curve={height=-24pt}, from=1-3, to=3-4]
	\arrow["{\id_{F^e_*R}\otimes_R\varphi}"', from=2-1, to=2-3]
	\arrow["{F^e_*\varphi}"', curve={height=24pt}, from=2-1, to=3-4]
	\arrow["{\exists! F_\varphi}"{description}, color={rgb,255:red,214;green,92;blue,92}, from=2-3, to=3-4]
\end{tikzcd}
\]
The ring $F^e_*R\otimes_R S$ is often also denoted as $S^{(p^e)}$ (or $S^{(e)}$ by some authors) and $F^e_\varphi$ is considered as a ring homomorphism $S^{(p^e)} \to S$, i.e., one typically suppresses the $F^e_*$ notation. Note that the $R$-algebra structure on $S^{(p^e)}$ is the $F^e_*R$-algebra structure. In this paper, we will switch between dropping and using the $F_*$'s when discussing the absolute/relative Frobenii. We hope this will not cause any confusion for the reader.

One advantage of working with the relative Frobenius is that it commutes with base change whereas the absolute Frobenius does not. Namely, if $\varphi \colon R \to S$ and $\phi \colon R \to T$ are $\mathbf{F}_p$-algebras, then $\id_T \otimes_R F_\varphi$ can be identified with $F_\phi$; see \cite[Expose~XV, $n^0 2$, Proposition~1 ~b)]{SGA5}. Another advantage is that the relative Frobenius of a map of Noetherian $\mathbf{F}_p$-algebras controls the singularities of the fibers of the map, as summarized in the result below.

\begin{theorem}
    \label{thm:Radu-Andre}
    Let $\varphi \colon R \to S$ be a map of Noetherian $\mathbf{F}_p$-algebras. Then:
    \begin{enumerate}
        \item $\varphi$ is flat with geometrically regular fibers (i.e., $\varphi$ is a regular map) if and only if $F_\varphi$ is faithfully flat.\label{thm:Radu-Andre.a}
        \item $\varphi$ is flat with geometrically reduced fibers if and only if $\varphi$ is flat and $F_\varphi$ is universally injective as a map of $R$-modules (i.e. $F_*R$-modules).\label{thm:Radu-Andre.b}
        \item If $\varphi$ is formally smooth, then $\varphi$ is a regular map. Hence $F_\varphi$ is a faithfully flat map of Noetherian rings.\label{thm:Radu-Andre.c}
    \end{enumerate}
\end{theorem}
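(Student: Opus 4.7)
The plan is to treat the three parts in order, with (1) being the main substantive claim (the Radu--André theorem), (2) a parallel statement of Radu, and (3) a formal consequence of (1) combined with a theorem of Grothendieck.

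For (1), I would first reduce to the case where $R = k$ is a field. The key enabling observation, already recorded in the preliminaries, is that the formation of $F_\varphi$ commutes with base change along $R \to T$: identifying $\id_T \otimes_R F_\varphi$ with $F_{\varphi \otimes_R T}$ turns assertions about $F_\varphi$ over $R$ into assertions about fiber-wise relative Frobenii. By the local criterion for flatness for Noetherian maps, faithful flatness of $F_\varphi$ is equivalent to flatness of $\varphi$ together with faithful flatness of the relative Frobenius of each fiber $k(\frp) \to S \otimes_R k(\frp)$; on the geometric side, $\varphi$ is regular iff it is flat and each fiber is geometrically regular over $k(\frp)$. Thus (1) reduces to: a Noetherian $k$-algebra $S$ is geometrically regular over $k$ iff $F_{S/k}\colon F_*k\otimes_k S \to F_*S$ is faithfully flat.

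The field case is then handled via $p$-bases and Kunz's theorem. When $k$ is perfect, $F_*k\otimes_k S = S$ in a canonical way and $F_{S/k}$ becomes the absolute Frobenius $F_S$; Kunz's theorem says $F_S$ is flat iff $S$ is regular, which over a perfect field coincides with geometric regularity, so this case is immediate. When $k$ is imperfect, choose a $p$-basis $\{t_i\}$ of $k$ and use that any finite purely inseparable extension $K/k$ is a quotient of a polynomial ring over $k^{1/p^e}$ by a regular sequence built from the $t_i^{1/p^e}$; geometric regularity of $S/k$ can then be tested against $S \otimes_k k^{1/p}$ (or a finite truncation), and this ring, in turn, is computed in terms of $F_*k \otimes_k S$. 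The faithful flatness of $F_{S/k}$ translates, via the Grothendieck--Serre-type relations between $F_*k\otimes_k S$, $S$, and the extension by a $p$-basis, into regularity of all the relevant base changes. The main obstacle is this last translation: matching the hypothesis on $p$-bases and the splitting of $F_{S/k}$ into conditions that exactly recover geometric regularity of every residue ring $S \otimes_k K$ for finite purely inseparable $K/k$.

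For (2), the strategy is the same fiber-by-fiber reduction, replacing Kunz's theorem by the observation that a Noetherian $k$-algebra $S$ has $S \otimes_k k^{1/p^\infty}$ reduced iff $F_{S/k}$ is universally injective as a map of $F_*k$-modules; passing back up via flat base change handles the relative case. Finally (3) is essentially a citation: by \cite[$0_{\text{IV}}$, Th\'eor\`eme~22.5.8]{EGAIV_I} a formally smooth map of Noetherian rings is flat with geometrically regular fibers, so $\varphi$ is regular, and then (1) upgrades this to faithful flatness of $F_\varphi$. I expect that the bulk of the work, and the main obstacle, lies in the imperfect-residue-field case of (1); everything else is either base change bookkeeping or invocation of a classical result.
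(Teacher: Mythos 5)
The paper does not actually prove this theorem; it is a collection of references. Part~\autoref{thm:Radu-Andre.a} is credited to Radu and Andr\'e (with Ma--Polstra~\cite[Theorem~10.1]{MaPolstraFSing} offered as an accessible modern source), part~\autoref{thm:Radu-Andre.b} to \emph{Dumitrescu}~\cite{DumitrescuReduceness} (not Radu, as you state), and part~\autoref{thm:Radu-Andre.c} to Grothendieck~\cite[$0_{IV}$, Corollaire~19.6.5, Th\`eor\'eme~19.7.1]{EGAIV_I}. The only original content in the paper's proof is a short remark for part~\autoref{thm:Radu-Andre.c}: that $S^{(p)}$ is itself Noetherian because $F_\varphi$ is faithfully flat, $S$ is Noetherian, and Noetherianity descends along (cyclically) pure maps. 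Your proposal omits this observation.

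By contrast, you attempt to prove part~\autoref{thm:Radu-Andre.a} from scratch via reduction to fibers and the case of a field, with Kunz's theorem handling the perfect base field. This is a reasonable outline of the actual proof strategy of Radu--Andr\'e, but it is not what the paper does, and the sketch has genuine gaps even as a sketch. In particular, the pivotal reduction step --- that faithful flatness of $F_\varphi$ is equivalent to flatness of $\varphi$ together with faithful flatness of the relative Frobenius of each fiber $k(\mathfrak{p}) \to S \otimes_R k(\mathfrak{p})$ --- is not an immediate application of ``the local criterion for flatness'': the standard fiberwise flatness criterion needs finiteness hypotheses (local Noetherianity or finite presentation of the middle ring over the base), and $S^{(p)} = F_*R\otimes_R S$ is not a priori Noetherian when $R$ is not $F$-finite. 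Establishing its Noetherianity, or circumventing it, is itself a substantive part of the Radu--Andr\'e argument. You also acknowledge a ``main obstacle'' in the imperfect-residue-field translation, so this is offered as a research plan rather than a proof. Since the theorem is attributed and cited in the paper rather than proved, no reconstruction was expected; citing Radu, Andr\'e, Dumitrescu, and Grothendieck, plus the descent remark about $S^{(p)}$, is the intended proof.
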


\begin{proof}
    \autoref{thm:Radu-Andre.a} is due to Radu \cite{RaduUneClasseDAnneaux} and Andr\'e \cite{AndreHomomorphismsRegulariers}. See also \cite[Theorem~10.1]{MaPolstraFSing} for an accessible proof of \autoref{thm:Radu-Andre.a}. \autoref{thm:Radu-Andre.b} is due to Dumitrescu \cite{DumitrescuReduceness}. The fact that a formally smooth map of Noetherian rings is regular is a deep result of Grothendieck \cite[$0_{IV}$, Corollaire~19.6.5, Th\`eor\'eme~19.7.1]{EGAIV_I}. An alternate reference is \cite[\href{https://stacks.math.columbia.edu/tag/07NQ}{Tag 07NQ}]{stacks-project} (note formal smoothness with respect to linear topologies is implied by formal smoothness \cite[\href{https://stacks.math.columbia.edu/tag/07EC}{Tag 07EC}]{stacks-project}). The faithful flatness in the last assertion of \autoref{thm:Radu-Andre.c} follows from \autoref{thm:Radu-Andre.a}. Note $S^{(p)}$ is Noetherian as a result of the faithful flatness of $F_\varphi$ because the target $S$ is Noetherian, and the property of being Noetherian satisfies faithfully flat (in fact, even cyclically pure) descent.
\end{proof}

\begin{remark}
    Another proof of \autoref{thm:Radu-Andre}~\autoref{thm:Radu-Andre.c} is given in \cite[Theorem~2.11]{BlickleFinkFfinite} using deep rigidity properties of the cotangent complex of a map of Noetherian rings due to Briggs-Iyengar \cite{BriggsIyengarCotangentRigidity}, Avramov \cite{AvramovLCI} and Andr\'e \cite{AndreHomologyCommutative}.
\end{remark}

We recall the following notion from \cite[Chapitre~0,~$\mathsection$~21]{EGAIV_I}.

\begin{definition}
    \label{def:p-basis}
    Let $R \to S$ be a map of $\mathbf{F}_p$-algebras and let $R[S^p]$ denote the image of the relative Frobenius $F_{S/R}$. Then $\{s_i\} \subset S$ is
    \begin{enumerate}
        \item \emph{$p$-independent over $R$} if the family of monomials 
        \[\left\{\prod s_i^{n_i} \colon 0 \leq n_i < p, n_i = 0 \text{ for all but finitely many } i \right\}\] is linearly independent over $R[S^p]$.
        \item  a \emph{$p$-basis of $S$ over $R$} if the family of monomials above is a basis of $S$ over $R[S^p]$.
    \end{enumerate}
\end{definition}

The following is an alternative characterization of a $p$-basis.

\begin{lemma}
    \label{lem:p-basis-another-characterization}
    Let $R \to S$ be a map of $\mathbf{F}_p$-algebras. Let $\{s_i\} \subset S$. Let $\{T_i\}_{i \in I}$ be a family of indeterminates indexed by $I$. The following are equivalent:
    \begin{enumerate}
        \item $\{s_i\}_{i \in I}$ is a $p$-basis of $S$ over $R$.
        \item The canonical $R[S^p]$-algebra map
        \begin{align*}
            S' \coloneqq \frac{R[S^p][T_i \colon i \in I]}{(T_i^p - s_i^p \colon i \in I)} &\longrightarrow S\\
            \overline{T_i} &\longmapsto s_i
        \end{align*}
        is an isomorphism.
    \end{enumerate}
\end{lemma}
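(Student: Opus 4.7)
The plan is to establish the equivalence by showing that both conditions boil down to the same statement about a certain set of monomials being an $R[S^p]$-basis of $S$.

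First I would analyze the structure of $S'$ as an $R[S^p]$-module. The key observation is that $S'$ is free as an $R[S^p]$-module with basis given by the monomials
\[
\left\{ \prod_{i \in I} \overline{T_i}^{n_i} \,:\, 0 \leq n_i < p,\ n_i = 0 \text{ for all but finitely many } i \right\}.
\]
To see this, I would argue in stages. For a single indeterminate, $R[S^p][T]/(T^p - s^p)$ is free of rank $p$ over $R[S^p]$ with basis $1, \overline{T}, \ldots, \overline{T}^{p-1}$, since $T^p - s^p$ is monic of degree $p$. Iterating this over any finite subset $J \subset I$ shows that $R[S^p][T_i : i \in J]/(T_i^p - s_i^p : i \in J)$ is free over $R[S^p]$ with the expected basis of monomials in the $\overline{T_i}$ with exponents in $\{0, 1, \ldots, p-1\}$. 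Then $S'$ is the filtered colimit over finite $J \subset I$ of these finite constructions, and a filtered colimit of free modules with compatible bases is free on the colimit basis, giving the description above.

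Next I would observe that the canonical $R[S^p]$-algebra map $S' \to S$ sends the basis monomial $\prod \overline{T_i}^{n_i}$ to $\prod s_i^{n_i}$. Therefore this map is a morphism of free $R[S^p]$-modules (on the source side) that sends a distinguished basis to the family of monomials appearing in \autoref{def:p-basis}.

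From here both implications are immediate. If $\{s_i\}$ is a $p$-basis of $S$ over $R$, then the target family $\{\prod s_i^{n_i}\}$ is an $R[S^p]$-basis of $S$, so the map $S' \to S$ carries a basis to a basis and is thus an isomorphism of $R[S^p]$-modules, hence of rings. Conversely, if the map is an isomorphism, then it carries the basis of $S'$ to a basis of $S$, which is precisely the statement that $\{s_i\}$ is a $p$-basis.

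The only mildly technical step is the justification that $S'$ really is free on the restricted-exponent monomials when $I$ is infinite; handling this cleanly via a filtered colimit argument (or equivalently, by a direct multi-index argument using that each relation $T_i^p - s_i^p$ is monic in its own variable) is the main thing to get right, but there is no genuine obstacle.
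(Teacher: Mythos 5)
Your proposal is correct and follows essentially the same route as the paper: the paper also notes that the restricted-exponent monomials form an $R[S^p]$-basis of $S'$ (stating this as "clear" where you spell out the monic/filtered-colimit justification) and then concludes via the observation that a linear map of free modules sending a basis to a basis is an isomorphism, which combined with the definition of a $p$-basis gives both directions.
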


\begin{proof}
    The family $\{\prod \overline{T_i}^{n_i} \colon 0 \leq n_i < p, n_i = 0 \text{ for all but finitely many } i\}$ is clearly a basis of $S'$ over $R[S^p]$. The result now follows by the definition of a $p$-basis because a linear map of free $R[S^p]$-modules that sends a basis to a basis has to be an isomorphism.
\end{proof}

\section{A characteristic \emph{p} analog of formally unramified, smooth and \'etale maps}

For an ideal $I$ of a ring $A$ and an integer $m \geq 0$, we will use $I^{[m]}$ to denote the ideal $(f^m \colon f \in I)$.

\begin{example}
   Let $R$ be a ring of prime characteristic $p > 0$ and $I$ an ideal. For any integer $e \geq 0$, if $F^e$ denotes the $e$-th iterate of the absolute Frobenius of $R$, then $F^e(I)R = I^{[p^e]}$.
\end{example}

\begin{definition}\cite[Project C]{Morrow2019THH} 
    \label{def:nil-ideal}
    An ideal $I$ of a ring $A$ is \emph{nil of bounded index} or \emph{bounded nil} (abbrv. \emph{b-nil}) if there exists $m \geq 0$ such that $I^{[m]} = 0$.
\end{definition}

Recall also that $I \subset A$ is \emph{nilpotent} if there is $m \geq 0$ so that $I^m = 0$, and $I$ is \emph{locally nilpotent} if every element of $I$ is nilpotent. 

\begin{example}
    If $I$ is a nilpotent ideal of a ring $A$, 
    then $I$ is bounded nil because $I^{[n]} \subset I^n$. A bounded nil ideal is clearly locally nilpotent. Furthermore a finitely generated bounded nil ideal is nilpotent. Indeed, if $I$ is generated by $n$ elements and $I^{[m]} = 0$, then $I^{nm} = 0$.
\end{example}

\begin{definition}\cite[Project C]{Morrow2019THH}
\label{def:b-nil formally-etale}
    A ring map $R \to S$ is called \emph{b-nil formally smooth} (resp. \emph{b-nil formally unramified}, resp. \emph{b-nil formally \'etale}) \footnote{Morrow uses the shorter term n-formally smooth (resp. unramified, resp. \'etale).} if for every bounded nil ideal $I$ of a ring $A$ and every solid commutative diagram 
    \begin{equation}
    \label{equn-thediagram}
    \begin{tikzcd}[cramped]
	{A/I} & S \\
	A & R
	\arrow[from=1-2, to=1-1]
	\arrow[color={rgb,255:red,214;green,92;blue,92}, dashed, from=1-2, to=2-1]
	\arrow[two heads, from=2-1, to=1-1]
	\arrow[from=2-2, to=1-2]
	\arrow[from=2-2, to=2-1]
\end{tikzcd}
    \end{equation}
    of rings, there exists at least one (resp. at most one, resp. exactly one) dashed arrow making the diagram commute. 
\end{definition}

\begin{lemma}
\label{lemma-standard}
Let $\varphi \colon R \to S$ be a ring map.
\begin{enumerate}[(1)]
    \item If $\varphi$ is b-nil formally smooth (resp. unramified, resp. \'etale), then $\varphi$ is formally smooth (resp. unramified, resp. \'etale).\label{lem:lemma-standard.1}
    
    \item A composition of  b-nil formally smooth (resp. unramified, resp. \'etale) maps is  b-nil formally smooth (resp. unramified, resp. \'etale).\label{lem:lemma-standard.2}
    
    \item A base change of a  b-nil formally smooth (resp. unramified, resp. \'etale) map is  b-nil formally smooth (resp. unramified, resp. \'etale).\label{lem:lemma-standard.3}
    
    \item If $R \to S$ and $S \to T$ are ring maps and both $R \to S$ and the composition $R \to S \to T$ are b-nil formally \'etale, then so is $S \to T$. \label{lem:lemma-standard.4}
    
    \item If $R \to S$ and $S \to T$ are ring maps such that the composition $R \to S \to T$ is b-nil formally unramified, then $S \to T$ is b-nil formally unramified.\label{lem:lemma-standard.5}
    
    \item A ring map $R \to S$ is  b-nil formally smooth (resp. unramified, resp. \'etale) if and only if for all surjective ring maps $A \to A/I$ which can be factored
    $$
    A \coloneqq \operatorname{lim}_n A_n \to \cdots \to A_{n+1} \to A_n \to \cdots \to A_0 = A/I
    $$
    in which each $A_{n+1} \twoheadrightarrow A_n$ is a surjective ring map whose kernel $I_{n+1}$ is bounded nil and all solid commutative diagrams \autoref{equn-thediagram}, there exists at least one (resp. at most one, resp. exactly one) dashed arrow $S\to A$ making the diagram \autoref{equn-thediagram} commute. \label{lem:lemma-standard.6}
    
    \item If $R \to S$ is a map of $\mathbf{F}_p$-algebras, then $R \to S$ is  b-nil formally smooth (resp. unramified, resp. \'etale) if and only if for all solid commutative diagrams \autoref{equn-thediagram} for pairs $(A,I)$ where $A$ is an $R$-algbera and $I$ is an ideal of $A$ such that $I^{[p]} = 0$, there exists at least one (resp. at most one, resp. exactly one) dashed arrow making the diagram commute. \label{lem:lemma-standard.7}
    
    \item If $\varphi \colon R \to S$ is a map of $\mathbf{F}_p$-algebras that is b-nil formally \'etale, then the relative Frobenius $F_\varphi$ is b-nil formally \'etale.\label{lem:lemma-standard.8}
    
    \item If $\varphi \colon R \to S$ is a map of $\mathbf{F}_p$-algebras that is b-nil formally unramified, then the relative Frobenius $F_\varphi$ is b-nil formally unramified. \label{lem:lemma-standard.9}
    
    \item If $R$ is an $\mathbf{F}_p$-algebra and $\mathbf{F}_p \to R$ is b-nil formally \'etale, then the absolute Frobenius $F_R \colon R \to R$ of $R$ is b-nil formally \'etale.\label{lem:lemma-standard.10}
    
    \item If $\varphi \colon R \to S$ is smooth, then $\varphi$ is b-nil formally smooth.\label{lem:lemma-standard.11}
    
    \item If $\varphi \colon R \to S$ is weakly \'etale, then $\varphi$ is b-nil formally \'etale.\label{lem:lemma-standard.12}
    
    \item If $\varphi \colon R \to S$ is \'etale, then $\varphi$ is b-nil formally \'etale.\label{lem:lemma-standard.13}
    
    \item If $\{X_\alpha \colon \alpha \in A\}$ is a family of indeterminates, then the polynomial extension $R \to R[X_\alpha \colon \alpha \in A]$ is b-nil formally smooth.\label{lem:lemma-standard.14}
    
    \item If $W \subset R$ is multiplicative, then $R \to W^{-1}R$ is b-nil formally \'etale.\label{lem:lemma-standard.15}
    
    \item A filtered colimit of b-nil formally \'etale $R$-algebras is a b-nil formally \'etale $R$-algebra.\label{lem:lemma-standard.16}
\end{enumerate}
\end{lemma}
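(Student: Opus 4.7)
The overall plan is to mirror the standard proofs of the analogous statements for formally smooth/unramified/\'etale maps, with the twist that we work against bounded nil kernels rather than square-zero ones. The crucial piece of arithmetic powering the whole lemma in characteristic $p$ is that if $I^{[m]} = 0$ and $p^e \geq m$, then the descending filtration $I \supseteq I^{[p]} \supseteq \cdots \supseteq I^{[p^e]} = 0$ has successive quotients $I^{[p^i]}/I^{[p^{i+1}]}$ killed by $p$-th powers; this is because additivity of Frobenius in characteristic $p$ gives $(\sum r_j g_j^{p^i})^p = \sum r_j^p g_j^{p^{i+1}} \in I^{[p^{i+1}]}$.

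Parts (1)--(5) are standard diagram chases: (1) uses that $I^2 = 0$ implies $I^{[2]} = 0$; (2)--(5) are verbatim from the standard case since they never reference the nature of the kernel. Part (6) is an induction along the tower $\{A_n\}$ using the lifting property at each step. Part (7) then follows from (6) together with the characteristic $p$ observation above: given $I$ with $I^{[m]} = 0$, the finite tower $A \to A/I^{[p^{e-1}]} \to \cdots \to A/I^{[p]} \to A/I$ has each surjection's kernel killed by $p$-th powers, so iterated lifts against such surjections yield a lift against $A \to A/I$.

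Parts (8) and (9) use the factorization $F_*R \xrightarrow{\iota} S^{(p)} \xrightarrow{F_\varphi} F_*S$, where $\iota$ is the base change of $\varphi$ along $F_R \colon R \to F_*R$ and hence b-nil formally \'etale (resp. unramified) by (3); the composite $F_\varphi \circ \iota = F_*\varphi$ has the same underlying ring map as $\varphi$ and so is b-nil formally \'etale (resp. unramified) by hypothesis; cancellation (4) (resp. (5)) yields the property for $F_\varphi$. Part (10) is (8) applied to $\varphi = (\mathbf{F}_p \to R)$, after noting that $S^{(p)}$ collapses to $R$ and $F_\varphi$ becomes $F_R$ when the base is $\mathbf{F}_p$. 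For (11), finite presentation of $S$ lets us reduce a lift against a bounded nil $I$ to a lift against the finitely generated, hence nilpotent, sub-ideal $J \subseteq I$ generated by the obstructions $f_j(a_i)$ from a choice of preimages of the variables; formal smoothness along the nilpotent $J$ then completes the lift. Part (13) is analogous, adding formal unramifiedness by the same finite-presentation argument; alternatively, \'etale maps are weakly \'etale so (13) follows from (12). Part (12) follows from the Bhatt--Scholze equivalence between weakly \'etale and ind-\'etale maps combined with (13) and (16), or from the classical fact that weakly \'etale maps of $\mathbf{F}_p$-algebras have isomorphic relative Frobenius together with the forthcoming \autoref{thm:main-thm-intro}.

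Parts (14)--(16) are routine: (14) defines lifts by freely choosing values on variables; (15) uses that a nil ideal sits inside the Jacobson radical of $A$, so each element of $W$ becomes a unit in $A$, giving a unique lift; (16) invokes the universal property of filtered colimits together with uniqueness of lifts at each stage to assemble a compatible system. The main obstacle I expect is part (7), where the characteristic $p$ filtration argument must be set up cleanly using the additivity of Frobenius; once (7) is in hand, the remaining items---especially the transfer of properties to the relative Frobenius in (8)--(10)---follow smoothly from (1)--(6).
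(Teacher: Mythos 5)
Most of your outline is correct and, for parts (4), (7)--(10), essentially the paper's argument: the Frobenius-additivity computation $(\sum r_j g_j^{p^i})^p = \sum r_j^p g_j^{p^{i+1}}$ and the finite tower $A = A/I^{[p^e]} \to \cdots \to A/I^{[p]} \to A/I$ are exactly how the paper proves (7) (a small caveat: the direction of (6) you would invoke there is not the one you need; what actually does the work is the finite iteration you then describe, which is fine), and (8)--(10) are the same factorization-plus-cancellation argument via $F_*\varphi = F_\varphi \circ (\id_{F_*R}\otimes_R \varphi)$. Your proofs of (11) and (13) take a genuinely different and more elementary route than the paper: the paper observes that a bounded nil ideal is locally nilpotent, so $(A,I)$ is a Henselian pair, and then quotes the lifting of smooth (resp.\ weakly \'etale) maps along Henselian pairs; your reduction to the finitely generated, hence nilpotent, obstruction ideal $J=(f_j(a_i))\subset I$ using finite presentation is correct and avoids that machinery (and the analogous finite-type argument for uniqueness in (13) also works).

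The genuine gap is part (12). Your first route misquotes Bhatt--Scholze: weakly \'etale maps are \emph{not} ind-\'etale in general; the theorem only provides a faithfully flat ind-\'etale map $B \to C$ such that $A \to C$ is ind-\'etale, and knowing that $A \to C$ and $B \to C$ are b-nil formally \'etale does not formally give the property for $A \to B$ --- the cancellation in (4) goes the wrong way, and the descent-type statement you would need is not among (1)--(16) and is not supplied. Your fallback via isomorphism of the relative Frobenius together with \autoref{thm:main-thm-intro} only covers maps of $\mathbf{F}_p$-algebras, whereas (12) is stated for arbitrary ring maps (bounded nil ideals make sense in any characteristic); it also leans on a theorem proved later in the paper (not actually circular, since that proof does not use (12), but this would have to be checked and said). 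The missing idea is the paper's: by the de Jong--Olander characterization \cite{deJongOlander}, a weakly \'etale map satisfies the \emph{unique} lifting property along all Henselian pairs, and a bounded nil ideal is locally nilpotent, so $(A,I)$ is a Henselian pair; this yields (12) in full generality, and then (13) also follows since \'etale maps are weakly \'etale.
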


\begin{proof}
We prove \autoref{lem:lemma-standard.4}, \autoref{lem:lemma-standard.7}, \autoref{lem:lemma-standard.8}, \autoref{lem:lemma-standard.10}, \autoref{lem:lemma-standard.11}, \autoref{lem:lemma-standard.12} and \autoref{lem:lemma-standard.13}. The rest of the properties are standard checks. We begin with \autoref{lem:lemma-standard.4}. Consider a solid diagram
\[
\begin{tikzcd}[cramped]
	{A/I} & T \\
	A & S
	\arrow[from=1-2, to=1-1]
	\arrow[two heads, from=2-1, to=1-1]
	\arrow[from=2-2, to=1-2]
	\arrow[from=2-2, to=2-1]
\end{tikzcd}
\]
where $I \subset A$ is a bounded nil ideal. Extending to the commutative diagram
\[\begin{tikzcd}[cramped]
	{A/I} & T \\
	A & S \\
	A & R
	\arrow[from=1-2, to=1-1]
	\arrow[two heads, from=2-1, to=1-1]
	\arrow[from=2-2, to=1-2]
	\arrow[from=2-2, to=2-1]
	\arrow["{\id_A}", from=3-1, to=2-1]
	\arrow[from=3-2, to=2-2]
	\arrow[from=3-2, to=3-1]
\end{tikzcd}\]
by the b-nil formally \'etale property of $R \to S \to T$, there exist a unique dashed arrow $T \to A$ such that the following diagram commutes:
\begin{equation}
\label{eq:expanded-square}
\begin{tikzcd}[cramped]
	{A/I} & T \\
	A & S \\
	A & R
	\arrow[from=1-2, to=1-1]
	\arrow["\exists!", color={rgb,255:red,214;green,92;blue,92}, dashed, from=1-2, to=3-1]
	\arrow[from=2-1, to=1-1]
	\arrow[from=2-2, to=1-2]
	\arrow["{\id_A}", from=3-1, to=2-1]
	\arrow[from=3-2, to=2-2]
	\arrow[from=3-2, to=3-1].
\end{tikzcd}
\end{equation}
Since the solid arrow $S \to A$ is the unique (because $R \to S$ is b-nil formally \'etale) one that makes 
\[\begin{tikzcd}[cramped]
	{A/I} \\
	A & S \\
	A & R
	\arrow[from=2-1, to=1-1]
	\arrow[from=2-2, to=1-1]
	\arrow[from=2-2, to=2-1]
	\arrow["{\id_A}", from=3-1, to=2-1]
	\arrow[from=3-2, to=2-2]
	\arrow[from=3-2, to=3-1]
\end{tikzcd}\]
commute (here $S \to A/I$ is the composition $S \to T \to A/I$), pre-composing the dashed arrow $T \to A$ with the map $S \to T$ must equal the solid arrow $S \to A$ by uniqueness, that is, the dashed arrow $T \to A$ makes the desired initial diagram 
\[\begin{tikzcd}[cramped]
	{A/I} & T \\
	A & S
	\arrow[from=1-2, to=1-1]
	\arrow[color={rgb,255:red,214;green,92;blue,92}, dashed, from=1-2, to=2-1]
	\arrow[from=2-1, to=1-1]
	\arrow[from=2-2, to=1-2]
	\arrow[from=2-2, to=2-1]
\end{tikzcd}\]
commute. The uniqueness of $T \to A$ for the last diagram follows from the uniqueness of $T \to A$ in \autoref{eq:expanded-square}.

    \autoref{lem:lemma-standard.7} If $I \subset A$ is an ideal in an $\mathbf{F}_p$-algebra such that there exists an integer $m \geq 0$ such that $I^{[m]} = 0$, then certainly we may take $m = p^e$ for some $e \gg 0$. Then use that $A \to A/I$ factors as
    $$
    A = A/I^{[p^e]} \to A/I^{[p^{e-1}]} \to \cdots \to A/I^{[p]} \to A/I
    $$
    in which each map is a quotient of an $\mathbf{F}_p$-algebra by an ideal $J$ satisfying $J^{[p]} = 0$ to get the desired unique lift. 

    \autoref{lem:lemma-standard.8} $F_\varphi$ makes the following diagram commute:
     \[
   \begin{tikzcd}[cramped]
	R && S \\
	{F_*R} && {F_*R\otimes_RS} \\
	&&& {F_*S}
	\arrow["\varphi", from=1-1, to=1-3]
	\arrow["{F_R}"', from=1-1, to=2-1]
	\arrow["{F_R\otimes_R\id_S}", from=1-3, to=2-3]
	\arrow["{F_S}", curve={height=-24pt}, from=1-3, to=3-4]
	\arrow["{\id_{F_*R}\otimes_R\varphi}"', from=2-1, to=2-3]
	\arrow["{F_*\varphi}"', curve={height=24pt}, from=2-1, to=3-4]
	\arrow["{\exists! F_\varphi}"{description}, color={rgb,255:red,214;green,92;blue,92}, from=2-3, to=3-4]
\end{tikzcd}
\]
Looking at the bottom triangle, $F_*\varphi$ is b-nil formally \'etale since it is just the map $\varphi$ while $\id_{F_*R}\otimes_R\varphi$ is b-nil formally \'etale by base change. Then $F_\varphi$ is b-nil formally \'etale by \autoref{lem:lemma-standard.4}.

\autoref{lem:lemma-standard.10} follows from \autoref{lem:lemma-standard.8} because the relative Frobenius of $\mathbf{F}_p \to R$ can be identified with the absolute Frobenius $F_R$ of $R$ because the absolute Frobenius of $\mathbf{F}_p$ is just the identity map.

\autoref{lem:lemma-standard.11} If $A$ is an $R$-algebra and $I \subset A$ is an ideal such that $I$ is bounded nil, then $I$ is locally nilpotent. Thus, the pair $(A,I)$ is Henselian by \cite[\href{https://stacks.math.columbia.edu/tag/0ALI}{Tag 0ALI}]{stacks-project}. A smooth map satisfies the lifting property with respect to all Henselian pairs by \cite[\href{https://stacks.math.columbia.edu/tag/0H74}{Tag 0H74}]{stacks-project}.

The proof of \autoref{lem:lemma-standard.12} is similar to \autoref{lem:lemma-standard.11} courtesy \cite[Theorem~1]{deJongOlander}. Namely, a map $R \to S$ is weakly \'etale if and only if it satisfies the unique lifting property for all Henselian pairs $(A,I)$, where $A$ is an $R$-algebra. As above, bounded nil ideals are locally nilpotent, and hence give rise to Henselian pairs.

Finally, \autoref{lem:lemma-standard.13} follows from \autoref{lem:lemma-standard.12} because an \'etale map is weakly \'etale by \cite[\href{https://stacks.math.columbia.edu/tag/092N}{Tag 092N}]{stacks-project}.
\end{proof}
%


\begin{remark}
    The converse of \autoref{lemma-standard}~\autoref{lem:lemma-standard.10} holds as well, that is, if $R$ is an $\mathbf{F}_p$-algebra such that the absolute Frobenius $F_R$ of $R$ is b-nil formally \'etale, then $\mathbf{F}_p \to R$ is b-nil formally \'etale. This follows from \autoref{thm:relative-Frobenius-iso-b-nil formally-etale} because $F_R$ can be identified with the relative Frobenius of $R/\mathbf{F}_p$.
\end{remark}

\subsection{B-nil formally smooth maps in prime characteristic}
Our goal in this section is to initiate a study of the b-nil formal smoothness property in prime characteristic. For example, we obtain an alternate characterization of b-nil formally smooth maps $R \to S$ of $\mathbf{F}_p$-algebras which demonstrates that such maps are close to satisfying that $S$ has a $p$-basis over $R$; see \autoref{thm:characterizing-b-nil-formal-smoothness} and \autoref{cor:p-basis-implies-bnil-formally-smooth}. The necessary and sufficient characterization of b-nil formal smoothness in \autoref{thm:characterizing-b-nil-formal-smoothness} is itself a consequence of the fact that for a b-nil formally smooth map $\varphi \colon R \to S$ of $\mathbf{F}_p$-algebras, the relative Frobenius $F_{\varphi}$ gives $S$ the structure of a projective $S^{(p)}$-module (\autoref{prop:b-nil-formally-smooth-injective-relative-Frobenius-projective}). 

To prove \autoref{prop:b-nil-formally-smooth-injective-relative-Frobenius-projective} and \autoref{thm:characterizing-b-nil-formal-smoothness} we need the following preparatory lemmas.

\begin{lemma}
\label{lemma-pthpowerofkernelofrelfrob}
    Let $\varphi \colon R \to S$ be a map of $\mathbf{F}_p$-algebras. Let $I \subset S^{(p)}$ be the kernel of $F_{\varphi}$. Then $I^{[p]} = 0$.
\end{lemma}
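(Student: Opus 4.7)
The plan is to prove the stronger pointwise statement that $x^p = 0$ in $S^{(p)}$ for every $x \in I$, from which $I^{[p]} = 0$ follows immediately by definition.

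First, I would unwind the definitions. An arbitrary element of $S^{(p)} = F_*R \otimes_R S$ may be written as a finite sum $x = \sum_i (F_* a_i) \otimes s_i$ with $a_i \in R$ and $s_i \in S$, and the characterization of $F_\varphi$ gives $F_\varphi(x) = F_*\bigl(\sum_i \varphi(a_i) s_i^p\bigr)$. So $x \in I$ translates to the identity $\sum_i \varphi(a_i) s_i^p = 0$ in $S$.

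The core of the argument is a one-line computation exploiting that $S^{(p)}$ is an $\mathbf{F}_p$-algebra, so $(y_1 + y_2 + \cdots)^p = y_1^p + y_2^p + \cdots$. Applied to $x$ this gives
\[
x^p \;=\; \sum_i \bigl(F_* a_i \otimes s_i\bigr)^p \;=\; \sum_i F_*(a_i^p) \otimes s_i^p.
\]
The key observation is that the $R$-module structure on $F_*R$ is via $F_R$, so the tensor product relation reads $F_*(r^p a) \otimes s = F_*(a) \otimes \varphi(r) s$ for any $r \in R$, $a \in F_*R$, $s \in S$. Taking $r = a_i$ and $a = 1$ transfers the $p$-th power across the tensor: $F_*(a_i^p) \otimes s_i^p = F_*(1) \otimes \varphi(a_i) s_i^p$. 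Summing yields
\[
x^p \;=\; F_*(1) \otimes \sum_i \varphi(a_i) s_i^p \;=\; F_*(1) \otimes 0 \;=\; 0,
\]
where the middle equality uses $x \in \ker F_\varphi$. Hence every element of $I$ has $p$-th power zero, and consequently $I^{[p]} = (f^p : f \in I) = 0$.

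I do not anticipate any genuine obstacle: the content of the lemma is that the tensor product relation in $F_*R \otimes_R S$ is exactly what is needed to recognize $x^p$ as $F_*(1) \otimes F_\varphi(x)$ up to the sign-preserving identification above. The only place to be careful is to keep straight that the $R$-module structure on $F_*R$ (which drives the tensor relation) is via Frobenius, while the ring structure on $F_*R$ is the unchanged ring structure on $R$; conflating the two would obscure the simple computation.
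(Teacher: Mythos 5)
Your proposal is correct and is essentially the paper's own argument: the paper likewise writes $x = \sum r_i \otimes s_i$, notes $F_\varphi(x) = \sum r_i s_i^p = 0$, and computes $x^p = \sum r_i^p \otimes s_i^p = \sum 1 \otimes r_i s_i^p = 1 \otimes \bigl(\sum r_i s_i^p\bigr) = 0$, using exactly the tensor relation over the Frobenius-twisted module structure that you isolate. Your version merely makes the $F_*$ and $\varphi$ bookkeeping explicit where the paper suppresses it.
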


\begin{proof}
    Let $x = \sum r_i \otimes s_i \in I$. Then $F_{\varphi}(x) = \sum r_is_i^p  = 0$ and so
    \[
    x^p = \sum r_i^p \otimes s_i^p = \sum 1 \otimes r_is_i^p = 1 \otimes (\sum r_i s_i^p)= 0. \qedhere
    \]
\end{proof}

\begin{lemma}
\label{lem:dominating-bounded-nil}
    Let $R$ be a ring of prime characteristic $p>0$. Let $S$ be an $R$-algebra. There is a map of $R$-algebras $S' \to S$ with the following properties:
\begin{enumerate}
    \item $S' \to S$ is surjective.  \label{lem:dominating-bounded-nil.a}
    \item The kernel $J$ of $S' \to S$ is bounded nil. In fact $J^{[p]}=0$. \label{lem:dominating-bounded-nil.b}
    \item In the commutative diagram
\begin{equation}
\label{eq:rel-Frob-functorial}
\begin{tikzcd}
	{S'^{(p)}} & {S'} \\
	{S^{(p)}} & S
	\arrow["{F_{S'/R}}", from=1-1, to=1-2]
	\arrow[from=1-1, to=2-1]
	\arrow[from=1-2, to=2-2]
	\arrow["{F_{S/R}}", from=2-1, to=2-2]
\end{tikzcd}
\end{equation}
obtained from naturality of the relative Frobenius, the kernel of $F_{S'/R}$ is equal to the kernel of the left vertical map $S'^{(p)} \to S^{(p)}$.\label{lem:dominating-bounded-nil.c}
\end{enumerate}
Moreover, if $F_{S/R}$ is of finite presentation, then $S'$ can be chosen to be a finitely presented $S^{(p)}$-algebra such that $F_{S'/R}$ is also finitely presented and the kernel $J$ of $S' \twoheadrightarrow S$ is nilpotent.
\end{lemma}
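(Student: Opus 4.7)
I will construct $S'$ by choosing a surjection of $R$-algebras $\pi_P \colon P \twoheadrightarrow S$ from a polynomial $R$-algebra $P = R[X_\alpha : \alpha \in A]$ with kernel $I$, and setting
\[
S' \coloneqq P/I^{[p]},
\]
with $\pi \colon S' \twoheadrightarrow S$ the induced surjection. Property \autoref{lem:dominating-bounded-nil.a} is immediate from $I^{[p]} \subseteq I$. For \autoref{lem:dominating-bounded-nil.b}, writing $J = I/I^{[p]}$, any lift $j \in I$ of an element of $J$ satisfies $j^p \in I^{[p]}$ by definition of the bracket power, so $J^{[p]} = 0$.

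The heart of the argument is \autoref{lem:dominating-bounded-nil.c}. Identify $P^{(p)} = F_*R \otimes_R P$ with the polynomial ring $F_*R[Y_\alpha]$ via $Y_\alpha \coloneqq 1 \otimes X_\alpha$; under this identification, $F_{P/R} \colon P^{(p)} \to P$ is the $F_*R$-algebra map $Y_\alpha \mapsto X_\alpha^p$, which is injective with image the subring $R[X_\alpha^p] \subseteq P$. I will prove the preimage identity
\[
F_{P/R}^{-1}(I^{[p]}) \ = \ I \cdot P^{(p)}
\]
in $P^{(p)}$; since $I^{[p]} \cdot P^{(p)} \subseteq I \cdot P^{(p)}$, quotienting by $I^{[p]} \cdot P^{(p)}$ turns this into the desired equality $\ker F_{S'/R} = \ker \alpha$ in $S'^{(p)} = P^{(p)}/(I^{[p]} \cdot P^{(p)})$, giving \autoref{lem:dominating-bounded-nil.c}.

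The technical input is the \emph{freshman's dream} in characteristic $p$: for $f = \sum r_{\underline n} X^{\underline n} \in I$, the power $f^p = \sum r_{\underline n}^p X^{p \underline n}$ already lies in $R[X_\alpha^p]$, and the collection $\{f^p : f \in I\}$ generates $I^{[p]}$ as an ideal of $P$. Using the free $R[X_\alpha^p]$-basis $\{X^{\underline m} : 0 \leq m_\alpha < p\}$ of $P$ to extract the $\underline m = 0$ component of a general element of $I^{[p]}$, one obtains the intersection identity
\[
I^{[p]} \cap R[X_\alpha^p] \ = \ \bigl(\text{ideal of } R[X_\alpha^p] \text{ generated by } \{f^p : f \in I\}\bigr).
\]
Since $F_{P/R}$ restricts to an isomorphism $P^{(p)} \xrightarrow{\sim} R[X_\alpha^p]$, and under this isomorphism the generator $1 \otimes f \in I \cdot P^{(p)}$ (computed from the relation $1 \otimes (r X^{\underline n}) = r^p Y^{\underline n}$ in $P^{(p)}$) maps to $f^p$, the preimage identity follows.

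For the moreover, if $F_{S/R}$ is finitely presented with presentation $S = S^{(p)}[T_1, \dots, T_n]/(g_1, \dots, g_m)$, take
\[
S' \coloneqq S^{(p)}[T_1, \dots, T_n]/(g_1^p, \dots, g_m^p).
\]
Another freshman's-dream computation gives $(g_1, \dots, g_m)^{[p]} = (g_1^p, \dots, g_m^p)$, so this realises the same recipe with $P = S^{(p)}[T]$, and properties \autoref{lem:dominating-bounded-nil.a}, \autoref{lem:dominating-bounded-nil.b} hold as above. Finite presentation of $S'$ as an $S^{(p)}$-algebra is manifest; the kernel $J = (g_1, \dots, g_m) S'$ is generated by $m$ elements whose $p$-th powers vanish, so by pigeonhole $J^{m(p-1)+1} = 0$, giving nilpotence of $J$; and finite presentation of $F_{S'/R}$ follows from stability under base change. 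Property \autoref{lem:dominating-bounded-nil.c} is verified by an analogous freshman's-dream argument, or by reducing to the main case through a polynomial $R$-algebra $\tilde P = R[X_\alpha, T_i]$ surjecting onto $S$ through $P = S^{(p)}[T]$. The main obstacle throughout is the intersection identity: the $\supseteq$ direction is trivial, but $\subseteq$ requires decomposing elements of $I^{[p]}$ in the free $R[X_\alpha^p]$-basis of $P$ and arguing that only the $\underline m = 0$ component can survive when the element itself already lies in $R[X_\alpha^p]$.
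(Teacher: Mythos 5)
Your construction in the general case is correct, and it takes a genuinely different route from the paper. You take a polynomial presentation $P = R[X_\alpha] \twoheadrightarrow S$ with kernel $I$ and set $S' = P/I^{[p]}$; the injectivity of $F_{P/R}$, the identification of its image with $R[X_\alpha^p]$, and the freeness of $P$ over $R[X_\alpha^p]$ on the reduced monomials make your intersection and preimage identities correct, and together with right-exactness of base change they give exactly the equality of kernels in \autoref{lem:dominating-bounded-nil.c}. The paper instead takes $S' = S^{(p)}[T_i]/(T_i^p - (1\otimes f_i))$ for a set of generators $\{f_i\}$ of $S$ over $S^{(p)}$, obtains \autoref{lem:dominating-bounded-nil.b} from \autoref{lemma-pthpowerofkernelofrelfrob} (the kernel of any relative Frobenius has vanishing $p$-th bracket power), and obtains \autoref{lem:dominating-bounded-nil.c} by noting that $F_{S'/R}$ lands in the subring $S^{(p)} \subseteq S'$, where it is identified with the left vertical map of \autoref{eq:rel-Frob-functorial}. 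The paper's recipe has the advantage that the finitely presented case is literally the same construction with a finite index set (and its explicit form is reused later, e.g. in \autoref{prop:b-nil-formally-smooth-injective-relative-Frobenius-projective}), whereas your recipe is uniform in $S$ but has to change shape for the ``moreover'' clause, and that is where the problems occur.

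The genuine gap is the verification of \autoref{lem:dominating-bounded-nil.c} for your finitely presented choice $S' = S^{(p)}[T_1,\dots,T_n]/(g_1^p,\dots,g_m^p)$. Your main-case mechanism does not transfer to $P = S^{(p)}[T]$: there $F_{P/R}$ acts by $F_{S^{(p)}/R}$ on coefficients and $T_i \mapsto T_i^p$, and $F_{S^{(p)}/R}$ is in general not injective (nor is $P$ free over the image of $F_{P/R}$), so ``an analogous freshman's-dream argument'' hides the real difficulty: the naive computation only shows that an element of $\ker F_{S'/R}$ has image in $S^{(p)}$ lying in $\ker F_{S/R}$, not that it is zero. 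The suggested reduction via $\tilde P = R[X_\alpha, T_i]$ also goes the wrong way: the main-case ring $\tilde P/\tilde I^{[p]}$ surjects onto your $S'$, and property \autoref{lem:dominating-bounded-nil.c} does not formally descend along a surjection over $S$ (a lift of an element of $\ker F_{S'/R}$ need not lie in $\ker F_{\tilde S'/R}$). The claim is in fact true, but it requires a further input; for instance, $(g_1,\dots,g_m)^{[p]} \subseteq (T_i^p - (1\otimes f_i))$, because modulo the latter ideal the kernel over $S$ has vanishing bracket power, so your $S'$ surjects onto the ring $S^{(p)}[T]/(T_i^p - (1\otimes f_i))$ used in the paper and one can chase kernels through it; alternatively, one can check that the base change $\id_{F_*R}\otimes_R \pi$ of the presentation $\pi$ equals the composite of $F_{P/R}$ with the evaluation $S^{(p)}[T_1^p,\dots,T_n^p] \to S^{(p)}$, $T_i^p \mapsto 1\otimes f_i$, and that this evaluation kills $g^p$ for every $g \in \ker \pi$. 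A second, smaller gap: finite presentation of $F_{S'/R}$ does not ``follow from stability under base change,'' since $F_{S'/R}$ is not a base change of $F_{S/R}$; as in the paper one should factor $S^{(p^2)} \to S'^{(p)} \to S'$ and invoke \cite[\href{https://stacks.math.columbia.edu/tag/00F4}{Tag 00F4}(4)]{stacks-project}. The remaining moreover claims (finite presentation of $S'$ over $S^{(p)}$, the identity $(g_1,\dots,g_m)^{[p]} = (g_1^p,\dots,g_m^p)$, and nilpotence of $J$ by pigeonhole) are fine.
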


We will repeatedly use that if $S$ is a finitely presented $R$-algebra and $T$ is a finitely generated $R$-algebra with an $R$-algebra surjection $T \twoheadrightarrow S$, then $\ker(T \twoheadrightarrow S)$ is finitely generated. This is a straightforward consequence of the fact that if one has maps $A \twoheadrightarrow B \to C$, then $\ker(A \to C)$ maps surjectively onto $\ker(B \to C)$. Thus, finite generation of $\ker(A \to C)$ implies that of $\ker(B \to C)$.

\begin{proof}
    Let $\{f_i\}_{i \in I}$ be a set of generators of $S$ as an $S^{(p)}$-algebra and take 
    \[S' = S^{(p)}[T_i \colon i \in I]/(T_i^p-(1\otimes f_i) \colon i \in I).\] 
    Let $S' \to S$ be given by the relative Frobenius $S^{(p)} \to S$ on $S^{(p)}$ and take $T_i \mapsto f_i$. Then $S' \to S$ is clearly surjective, so \autoref{lem:dominating-bounded-nil.a} holds. Consider the $R$-algebra $S[T_i: i \in I]/(T_i^p-f_i \colon i \in I)$. Its relative Frobenius is the map 
    $$
    S' = S^{(p)}[T_i: i \in I]/(T_i^p-(1 \otimes f_i) \colon i \in I) \to S[T_i: i \in I]/(T_i^p-f_i \colon i \in I)
    $$
    which is equal to $F_{S/R}$ on $S^{(p)}$ and takes $T_i$ to $T_i ^p = f_i$. This map has image contained in the subring $S \subset S[T_i: i \in I]/(T_i^p-f_i)$ and is identified with the surjection $S' \twoheadrightarrow S$. Thus, the kernel $J$ of $S' \twoheadrightarrow S$ is equal to the kernel of the relative Frobenius of $S[T_i]/(T_i^p - f_i)$, and so, $J^{[p]} = 0$ by \autoref{lemma-pthpowerofkernelofrelfrob}. This proves \autoref{lem:dominating-bounded-nil.b}.
   %
    That leaves \autoref{lem:dominating-bounded-nil.c}. Note that $S^{(p)}$ is an $R$-algebra via the map $R \to S^{(p)}$ that sends $R \ni r \mapsto r \otimes 1$. Thus, the relative Frobenius of $R \to S^{(p)}$ can be identified with the map $S^{(p^2)} \to S^{(p)}$ that sends $S^{(p^2)} \ni r \otimes s \mapsto r \otimes s^p$. We then have
    $$
    S'^{(p)} = S^{(p^2)}[T_i\colon i \in I]/(T_i^p-(1\otimes f_i) \colon i \in I)
    $$
    and the relative Frobenius $F_{S'/R} \colon S'^{(p)} \to S'$ of $R \to S'$ is the relative Frobenius $S^{(p^2)} \to S^{(p)}$ of $R \to S^{(p)}$ on $S^{(p^2)}$ and takes $T_i$ to $T_i^p =  1 \otimes f_i$ in $S'$. Thus, $S'^{(p)} \to S'$ (i.e. the top horizontal map in \autoref{eq:rel-Frob-functorial}) has image contained in the subring $S^{(p)}.$ On the other hand, the left vertical map $S'^{(p)} \to S^{(p)}$ in \autoref{eq:rel-Frob-functorial} is the base change of $S' \to S$ along the absolute Frobenius of $R$ and hence is equal to the relative Frobenius $S^{(p^2)} \to S^{(p)}$ of $R \to S^{(p)}$ on $S^{(p^2)}$ and sends $T_i \mapsto 1 \otimes f_i$. Thus, after restricting the co-domain of $F_{S'/R} \colon S'^{(p)} \to S'$ to $S^{(p)}$, the resulting map $S'^{(p)} \to S^{(p)}$ can then be identified with the left vertical arrow in \autoref{eq:rel-Frob-functorial}, so \autoref{lem:dominating-bounded-nil.c} follows. 

    Now assume $S$ is a finitely presented $S^{(p)}$-algebra. Then the set of generators of $S$ over $S^{(p)}$ can be chosen to be finite, that is, the index set $I$ above is finite. Then $S'$ is clearly a finitely presented $S^{(p)}$-algebra. Since $S' \twoheadrightarrow S$ is then a surjection of finitely presented $S^{(p)}$-algebras, it follows that $J = \ker(S' \twoheadrightarrow S)$ is a finitely generated ideal. But $J^{[p]} = 0$ by above, and a finitely generated bounded nil ideal is clearly nilpotent, that is, $J^n = 0$ for some $n \gg 0$. It remains to show in this case that $S'$ is also a finitely presented $S'^{(p)}$-algebra. Note that $R \to S^{(p)}$ is the base change of $R \to S$ along $F_R \colon R \to R$. Since the relative Frobenius is compatible with base change and since the base change of a finitely presented map is finitely presented, it follows that $F_{S^{(p)}/R}$ is finitely presented as well, that is, $S^{(p)}$ is a finitely presented $S^{(p^2)}$-algebra. Then $S' = S^{(p)}[T_i \colon i \in I]/(T_i^p - (1\otimes f_i) \colon i \in I)$ is a finitely presented $S^{(p^2)}$-algebra (note $I$ is a finite set). Since $S^{(p^2)} \to S'$ factors as $S^{(p^2)} \hookrightarrow S'^{(p)} \xrightarrow{F_{S'/R}} S'$ by the observation in the previous paragraph, and since $S'^{(p)}$ is a finitely presented $S^{(p^2)}$-algebra because $I$ is finite, it follows by \cite[\href{https://stacks.math.columbia.edu/tag/00F4}{Tag 00F4}(4)]{stacks-project} that $F_{S'/R}$ is finitely presented as well, i.e., $S'$ is a finitely presented $S'^{(p)}$-algebra.
\end{proof}

\begin{proposition}
\label{prop:b-nil-formally-smooth-injective-relative-Frobenius-projective}
    Let $\varphi: R \to S$ be a b-nil formally smooth map of $\mathbf{F}_p$-algebras. Then $S$ is a projective $S^{(p)}$-module via $F_\varphi$, and hence, $F_\varphi$ is faithfully flat and has an $S^{(p)}$-linear left inverse. In fact, if $\{f_i\}_{i \in I} \subset S$ is a set of generators of $S$ over $S^{(p)}$, then the canonical $S^{(p)}$-algebra surjection 
        \begin{align*}
            \frac{S^{(p)}[T_i \colon i \in I]}{(T_i^p - (1 \otimes f_i) \colon i \in I)} &\twoheadrightarrow S\\
            \overline{T_i} &\mapsto f_i
        \end{align*}
        has an $S^{(p)}$-algebra right inverse.
\end{proposition}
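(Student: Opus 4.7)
The strategy is to apply the b-nil formal smoothness of $\varphi$ to the surjection $\pi \colon S' \twoheadrightarrow S$ furnished by \autoref{lem:dominating-bounded-nil}, and then to show that any resulting $R$-algebra section is automatically $S^{(p)}$-linear via a direct characteristic $p$ calculation.

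More concretely, with $S' = S^{(p)}[T_i : i \in I]/(T_i^p - (1 \otimes f_i))$, $\pi \colon S' \twoheadrightarrow S$ sending $T_i \mapsto f_i$, and $J = \ker\pi$ satisfying $J^{[p]} = 0$ (by \autoref{lem:dominating-bounded-nil}), I would equip $S'$ with the $R$-algebra structure $\jmath \colon R \to S'$, $r \mapsto r \otimes 1$, which is a ring map by bilinearity of the tensor product. Since $\pi(r \otimes 1) = F_\varphi(r \otimes 1) = r = \varphi(r)$, the map $\pi$ becomes an $R$-algebra map, and the square with top row $\id_S$, left arrow $\pi$, right arrow $\varphi$, and bottom row $\jmath$ commutes. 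Invoking the b-nil formal smoothness of $R \to S$ for the pair $(S', J)$ then produces an $R$-algebra map $\sigma \colon S \to S'$ with $\pi \circ \sigma = \id_S$ and $\sigma(\varphi(r)) = r \otimes 1$ for $r \in R$.

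The main step, and the principal obstacle, is to prove that this $\sigma$ is automatically $S^{(p)}$-linear, i.e.\ $\sigma \circ F_\varphi = \iota$ for the canonical inclusion $\iota \colon S^{(p)} \hookrightarrow S'$. Since $S^{(p)}$ is generated as a ring by $\{r \otimes 1 : r \in R\}$ and $\{1 \otimes s : s \in S\}$, and equality on the first family is immediate from $R$-linearity of $\sigma$, it suffices to verify $\sigma(s^p) = 1 \otimes s$ for every $s \in S$. The key observation is that $S'$ is free as an $S^{(p)}$-module on the monomials $\{T^\alpha : 0 \le \alpha_i < p\}$, so one may write $\sigma(s) = \sum_\alpha c_\alpha T^\alpha$ with $c_\alpha \in S^{(p)}$. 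Using $(\sum a_j)^p = \sum a_j^p$ in characteristic $p$ together with the defining relations $T_i^p = 1 \otimes f_i$, the computation yields
\[
\sigma(s^p) \;=\; \sigma(s)^p \;=\; \sum_\alpha c_\alpha^p\, T^{p\alpha} \;=\; \sum_\alpha c_\alpha^p \cdot (1 \otimes f^\alpha).
\]
On the other hand, $\pi(\sigma(s)) = s$ unpacks to $s = \sum_\alpha F_\varphi(c_\alpha)\, f^\alpha$; applying the ring map $\eta \colon S \to S^{(p)}$, $s \mapsto 1 \otimes s$, and using the elementary identity $1 \otimes F_\varphi(x) = x^p$ in $S^{(p)}$ (easily checked on pure tensors $x = r \otimes t$) gives $1 \otimes s = \sum_\alpha c_\alpha^p \cdot (1 \otimes f^\alpha)$. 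Comparing the two expressions yields $\sigma(s^p) = 1 \otimes s$, which is precisely $S^{(p)}$-linearity.

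Once $\sigma$ is recognised as an $S^{(p)}$-algebra right inverse of $\pi$, the remaining assertions follow quickly: $S$ is an $S^{(p)}$-module direct summand of the free module $S'$, hence projective; post-composing $\sigma$ with the $S^{(p)}$-linear projection $S' \to S^{(p)}$ onto the $T^0$-coefficient produces an $S^{(p)}$-linear left inverse of $F_\varphi = \pi \circ \iota$; and since $F_\varphi$ is then split injective with $S$ flat over $S^{(p)}$, faithful flatness follows because $\mathfrak{m} S \cap S^{(p)} = \mathfrak{m}$ for every maximal ideal $\mathfrak{m}$ of $S^{(p)}$.
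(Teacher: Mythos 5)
Your proof is correct, but it takes a genuinely different route through the key step. The paper also applies the lifting property to the surjection $S' = S^{(p)}[T_i]/(T_i^p - (1\otimes f_i)) \twoheadrightarrow S$, but to show the resulting $R$-algebra section is $S^{(p)}$-linear it first needs the injectivity of $F_\varphi$, which it establishes separately by iterating the construction of \autoref{lem:dominating-bounded-nil} into an inverse system and invoking the limit form of the lifting property (\autoref{lemma-standard}~\autoref{lem:lemma-standard.6}), using part \autoref{lem:dominating-bounded-nil.c} of that lemma; only then does it deduce the paper's Claim, and it obtains the $S^{(p)}$-linear left inverse afterwards via the trace-ideal argument of \autoref{lem:projective-implies-split}. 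You instead prove $S^{(p)}$-linearity of $\sigma$ directly: expand $\sigma(s)$ in the monomial basis of the free $S^{(p)}$-module $S'$, use $T_i^p = 1\otimes f_i$ and the identity $x^p = 1\otimes F_\varphi(x)$ in $S^{(p)}$ (equivalently, $(F_R\otimes_R\id_S)\circ F_\varphi = F_{S^{(p)}}$, the same mechanism as in \autoref{lemma-pthpowerofkernelofrelfrob} and \autoref{lem:lifting-lemma-Frobenius}), and compare with the relation $s=\sum_\alpha F_\varphi(c_\alpha)f^\alpha$ coming from $\pi\circ\sigma=\id$. This bypasses the injectivity input entirely: a single application of the lifting property suffices, no inverse limit is needed, injectivity of $F_\varphi$ drops out of the splitting rather than feeding into it, and the left inverse is exhibited explicitly as the $T^0$-coefficient projection composed with $\sigma$ (your purity argument for faithful flatness is also fine, and parallel to the paper's surjectivity-on-spectra argument). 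What your argument loses is only the by-product that the paper's limit construction reappears almost verbatim in \autoref{prop:formal-smoothness-fp-relFrob-injective-projective}, where one may lift only against nilpotent ideals; but note that your linearity computation would apply there too once a section exists, so your route is a genuine simplification of the paper's proof.
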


\begin{proof}
    We first show $F_\varphi$ is injective. Inductively build a sequence of $R$-algebra maps
    $$
    \cdots \to A_{n+1} \to A_n \to \cdots \to A_0 = S
    $$
    where each $A_{n+1} \to A_n$ satisfies conditions \autoref{lem:dominating-bounded-nil.a}-\autoref{lem:dominating-bounded-nil.c} of \autoref{lem:dominating-bounded-nil}. Since $R \to S$ is b-nil formally smooth, if $A \coloneqq \operatorname{lim}_n A_n$ then there is a map $S \to A$ of $R$-algebras which is a right inverse of $A \to A_0 = S$ (\autoref{lemma-standard}~\autoref{lem:lemma-standard.6}). We can think of the map $S \to A$ as a commutative diagram of maps
\[\begin{tikzcd}
	\cdots & {A_3} & {A_2} & {A_1} & {A_0 = S} \\
	&&&& S
	\arrow[from=1-1, to=1-2]
	\arrow[from=1-2, to=1-3]
	\arrow[from=1-3, to=1-4]
	\arrow[from=1-4, to=1-5]
	\arrow["\cdots"{description}, draw=none, from=2-5, to=1-1]
	\arrow[from=2-5, to=1-2]
	\arrow[from=2-5, to=1-3]
	\arrow[from=2-5, to=1-4]
	\arrow[from=2-5, to=1-5]
\end{tikzcd}\]
in the category of $R$-algebras. Base changing along the absolute Frobenius of $R$, we obtain a similar system where every term is replaced by its Frobenius twist, and hence a map $S^{(p)} \to \operatorname{lim}_nA_n^{(p)}$ which is right inverse to $\operatorname{lim}_nA_n^{(p)} \to A_0^{(p)} = S^{(p)}$ and fits into a diagram of $R$-algebras
\[\begin{tikzcd}
	S & \operatorname{lim}_n A_n & S \\
	{S^{(p)}} & {\operatorname{lim}_n A^{(p)}_n} & {S^{(p)}}
	\arrow[from=1-1, to=1-2]
	\arrow[from=1-2, to=1-3]
	\arrow["{F_{\varphi}}"', from=2-1, to=1-1]
	\arrow[from=2-1, to=2-2]
	\arrow[from=2-2, to=1-2]
	\arrow[from=2-2, to=2-3]
	\arrow["{F_{\varphi}}"', from=2-3, to=1-3]
\end{tikzcd}\]
where the horizontal arrows compose to the identity and the middle vertical arrow is given component-wise by relative Frobenius. It suffices to show the middle vertical arrow is injective. If $(x_n)_{n \geq 0} \in \operatorname{lim}_nA_n^{(p)}$ is an element which maps to zero, that is $F_{A_n/R}(x_n) = 0$ for all $n$, then by condition \autoref{lem:dominating-bounded-nil.c} of \autoref{lem:dominating-bounded-nil} we see that $x_{n-1} = 0$ for all $n\geq 1$ and so $(x_n)_{n \geq 0} = 0$, as needed. 

Let $\{f_i\}_{i \in I}$ be a set of generators of $S$ as an $S^{(p)}$-algebra. Consider the ring map $A = S^{(p)}[T_i \colon i \in I]/(T_i^{p}-(1 \otimes f_i) \colon i \in I) \to S$ which is $F_{\varphi}$ on $S^{(p)}$ and sends $T_i \to f_i$. This is a surjection of $R$-algebras with bounded nilpotent kernel, as shown in the proof of \autoref{lem:dominating-bounded-nil}. Since $R \to S$ is b-nil formally smooth, there is an $R$-algebra map $g: S \to A$ such that the composition $S \to A \to S$ is the identity.

\begin{claim}
\label{claim:retract}
The diagram
   \[\begin{tikzcd}
	S & S \\
	A = {S^{(p)}[T_i \colon i \in I]/(T_i^{p}-(1 \otimes f_i) \colon i \in I)} & {S^{(p)}}
	\arrow["{\operatorname{id}_S}"', from=1-2, to=1-1]
	\arrow[from=2-1, to=1-1]
    \arrow["g", from=1-2, to=2-1]
	\arrow["{F_{\varphi}}"', from=2-2, to=1-2]
	\arrow[from=2-2, to=2-1]
\end{tikzcd}\]
commutes.
\end{claim}

    The top triangle commutes by construction so the claim says that
    $g: S \to A$ is actually a map of $S^{(p)}$-algebras where $S$ is an $S^{(p)}$-algebra via $F_\varphi$. By the universal property of the tensor product it suffices to show that the composition $g \circ F_S$ is equal to the composition $S \to S^{(p)} \to A$ where $S \to S^{(p)}$ is $F_R \otimes_R \id_S$.
    We have that $\im(g \circ F_S)$ is contained in the subring of $A^p \subset A$, and thus  $\im(g \circ F_S) \subset S^{(p)}$. We may then view $g \circ F_S$ as a map $h: S \to S^{(p)}$ and since $A \to S$ is equal to $F_{\varphi}$ on the subring $S^{(p)}$, we see that $F_{\varphi} \circ h = F_S$ by the fact that the top triangle in the diagram commutes. However, $F_R \otimes_R \id_S \colon S \to S^{(p)}$ also satisfies that its post-composition with $F_{\varphi}$ is equal to $F_S$. Since we have seen that $F_{\varphi}$ is injective, we get $h = F_R \otimes_R \id_S$, and hence, $g \circ F_S = S \to S^{(p)} \to A$. The claim follows. 

    But now we see that $S$ is a direct summand as an $S^{(p)}$-module of the free $S^{(p)}$-module $A$, thus $S$ is a projective $S^{(p)}$-module. Since projective modules are flat and $F_\varphi$ is surjective on spectra, we get $F_\varphi$ is faithfully flat. Now the fact that $F_\varphi$ also splits as a map of $S^{(p)}$-modules follows by the following more general observation.
\end{proof}

\begin{lemma}
    \label{lem:projective-implies-split}
    Let $\varphi \colon R \to S$ be a ring map such that $S$ is a projective $R$-module and such that for all maximal ideals $\m$ of $R$, $\m S \neq S$ (for e.g., if $\Spec(S) \twoheadrightarrow \Spec(R)$). Then $\varphi$ has an $R$-linear left-inverse.
\end{lemma}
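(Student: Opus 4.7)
The plan is to invoke the dual basis lemma for projective modules. Since $S$ is projective as an $R$-module, there exist families $\{s_i\}_{i \in I} \subset S$ and $\{f_i\}_{i \in I} \subset \Hom_R(S,R)$ such that for every $s \in S$ one has $f_i(s) = 0$ for all but finitely many $i$, and $s = \sum_i f_i(s) \cdot s_i$. The idea is to use this identity to construct a splitting by examining what happens at the element $1_S$ and then invoking $R$-linearity to extend the splitting to all of $\varphi(R)$.

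Applying the dual basis identity to $1_S \in S$ yields $1_S = \sum_{i=1}^n a_i s_i$, where $a_i := f_i(1_S)$ and, after relabeling, $a_1,\ldots,a_n$ are the finitely many nonzero coefficients. The main step, and the point at which the hypothesis on maximal ideals enters, is to verify that the ideal $J := (a_1,\ldots,a_n) \subset R$ equals $R$. If $J$ were proper, then $J \subseteq \m$ for some maximal ideal $\m$ of $R$; but then $1_S = \sum_i a_i s_i \in \m S$, which forces $\m S = S$ and contradicts the hypothesis. This is the only genuine content of the argument.

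Therefore $1 = \sum_{i=1}^n c_i a_i$ for some $c_i \in R$, and I define $\pi := \sum_{i=1}^n c_i f_i \colon S \to R$. This is a well-defined $R$-linear map, being a \emph{finite} $R$-linear combination of elements of $\Hom_R(S,R)$, and by construction $\pi(1_S) = \sum c_i a_i = 1$. Since the $R$-module structure on $S$ is via $\varphi$, it follows that
\[
\pi(\varphi(r)) = \pi(r \cdot 1_S) = r \cdot \pi(1_S) = r
\]
for every $r \in R$, exhibiting $\pi$ as the required $R$-linear left inverse of $\varphi$.
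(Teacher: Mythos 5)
Your proof is correct. It differs from the paper's argument mainly in packaging: you invoke the dual basis lemma directly, so the coefficients $a_i = f_i(1_S)$ are by construction values of functionals on $S$ itself, and the hypothesis $\m S \neq S$ immediately forces $(a_1,\dots,a_n) = R$; a finite combination $\sum c_i f_i$ then gives the splitting. The paper instead embeds $S$ as a direct summand of a free module $F$, works with the trace ideals $\Tr_F(1_S)$ and $\Tr_S(1_S)$, and needs the extra steps that $\Tr_F(1_S) \subset \Tr_S(1_S)$ (functoriality) and $\Tr_F(1_S)S = \Tr_F(1_S)F \cap S$ (universal injectivity of the split inclusion), before using faithful flatness of $R \to S$ to conclude $\Tr_S(1_S) = R$. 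Your route is shorter and more elementary for this particular statement, since the dual basis lemma collapses the free-module detour; the paper's formulation has the advantage of being phrased in the trace-ideal language that it reuses later, where the same argument is generalized to show that projective modules are Ohm--Rush trace. Both proofs hinge on exactly the same mechanism: projectivity produces functionals whose values at $1_S$ generate an ideal that cannot lie in any maximal ideal, by the hypothesis $\m S \neq S$.
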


\begin{proof}
    The result follows by \cite[Remark~4.1.2~(f), Lemma~4.1.5~(2)]{DattaEpsteinTuckerMLmodules} but we reproduce the argument here for the reader's convenience. Our assumption implies $R \to S$ is faithfully flat. Let $S$ be a direct summand of a free $R$-module $F$ with basis $\{x_\alpha\}_{\alpha \in A}$. Let $\pi_\alpha \colon F \to R$ be projection onto $x_\alpha$. Let $a_\alpha$ be the coefficient of $x_\alpha$ when you express $1_S \in S$ in terms of the basis. Let $\Tr_S(1_S) \coloneqq \im(\Hom_R(S,R) \xrightarrow{\text{eval @ $1_S$}} R)$. Similarly, let $\Tr_F(1_S) \coloneqq \im(\Hom_R(F,R) \xrightarrow{\text{eval @ $1_S$}} R)$. Clearly, $1_S \in \Tr_F(1_S)F$ because for all $\alpha$, $a_\alpha \in \Tr_F(1_S)$. Now $S \hookrightarrow F$ implies by functoriarity that $\Tr_F(1_S) \subset \Tr_S(1_S)$. Since $S \hookrightarrow F$ is universally injective, we have $\Tr_F(1_S)S = \Tr_F(1_S)F \cap S$. Thus, $1_S \in \Tr_F(1_S)S \subset \Tr_S(1_S)S$. But $\Tr_S(1_S)$ is an ideal of $R$ and $R \to S$ is faithfully flat. So, $\Tr_S(1_S) = \Tr_S(1_S)S \cap R = R$. Since $1 \in \Tr_S(1_S)$, this precisely means there exists an $R$-linear map $S \to R$ which sends $1_S \mapsto 1_R$, that is, $\varphi$ splits.
\end{proof}

\begin{remark}
\label{rem:relative-Frob-b-nil-smooth-projective}
    If $\varphi \colon R \to S$ is a map of $\mathbf{F}_p$-algebras such that $F_\varphi$ is b-nil formally smooth\footnote{We will see in \autoref{thm:formally-etale-Noetherian-relative-Frobenius-iso} that this implies $F_\varphi$ is an isomorphism.}, then it is also the case that $S$ is a projective $S^{(p)}$-module. The proof in this case is shorter, so we provide it because we will use this fact in the proof of \autoref{thm:formally-etale-Noetherian-relative-Frobenius-iso}. As in the proof of \autoref{prop:b-nil-formally-smooth-injective-relative-Frobenius-projective}, let $\{f_i\}_{i \in I}$ be a set of generators of $S$ as an $S^{(p)}$-algebra. Then $1 \otimes f_i \in S^{(p)}$ maps to $f_i^p$ via $F_\varphi$. Consider the commutative square
\[\begin{tikzcd}
	S & S \\
	{S^{(p)}[T_i \colon i \in I]/(T_i^{p}- (1\otimes f_i) \colon i \in I)} & {S^{(p)}}
	\arrow["{\operatorname{id}_S}"', from=1-2, to=1-1]
	\arrow[from=2-1, to=1-1]
	\arrow["{F_{\varphi}}"', from=2-2, to=1-2]
	\arrow[from=2-2, to=2-1]
\end{tikzcd}\]
where the ring map $S^{(p)}[T_i \colon i \in I]/(T_i^{p}-(1\otimes f_i) \colon i \in I) \to S$ is $F_{\varphi}$ on $S^{(p)}$ and sends $T_i \to f_i$. We have seen this map has bounded nil kernel (see the proof of \autoref{lem:dominating-bounded-nil}~\autoref{lem:dominating-bounded-nil.b}). Now, since $F_{\varphi}$ is b-nil formally smooth there exists a ring map $S \to S^{(p)}[T_i \colon i \in I]/(T_i^{p}-(1\otimes f_i) \colon i \in I)$ making the above diagram commute. In particular, we see that $S$ is a direct summand of the free $S^{(p)}$-module $S^{(p)}[T_i \colon i \in I]/(T_i^{p}-(1\otimes f_i)\colon i \in I)$, which proves the claim.
\end{remark}

\begin{corollary}
\label{cor-geometricallyreduced}
Let $\varphi \colon R \to S$ be a nil-formally smooth map of $\mathbf{F}_p$-algebras. 
\begin{enumerate}
    \item If the absolute Frobenius of $R$ is universally injective (i.e. $R$ is $F$-pure), then $S$ is reduced.\label{cor-geometricallyreduced.a}

    \item If $R$ is a field, then $S$ is geometrically reduced over $R$.\label{cor-geometricallyreduced.b}
\end{enumerate}
\end{corollary}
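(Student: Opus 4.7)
For part \autoref{cor-geometricallyreduced.a}, the plan is to exploit the injectivity of the relative Frobenius $F_\varphi \colon S^{(p)} \to F_*S$, which is guaranteed by \autoref{prop:b-nil-formally-smooth-injective-relative-Frobenius-projective} since projective modules are in particular faithfully flat. Given $s \in S$ with $s^p = 0$, observe that $F_\varphi(1 \otimes s) = s^p = 0$, so injectivity of $F_\varphi$ forces $1 \otimes s = 0$ in $S^{(p)} = F_*R \otimes_R S$. Now notice that the map $S \to F_*R \otimes_R S$ sending $s \mapsto 1 \otimes s$ is the base change of the absolute Frobenius $R \to F_*R$ along $\varphi$. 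By the $F$-purity hypothesis, $R \to F_*R$ is universally injective, so its base change is injective, forcing $s = 0$. To handle a general nilpotent $s \in S$, pick the smallest $e \geq 0$ with $s^{p^e} = 0$; if $e \geq 1$, applying the argument above to $s^{p^{e-1}}$ contradicts minimality of $e$, so $s = 0$.

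For part \autoref{cor-geometricallyreduced.b}, the plan is to reduce to \autoref{cor-geometricallyreduced.a} via base change. The key observation is that every field $L$ of characteristic $p$ is $F$-pure: the absolute Frobenius $L \to F_*L$ is a nonzero map out of a field, hence injective, hence faithfully flat (since $L$ has no nonzero proper ideals), hence universally injective. For any field extension $L/R$, \autoref{lemma-standard}\autoref{lem:lemma-standard.3} tells us that the base change $L \to S \otimes_R L$ is b-nil formally smooth, and then part \autoref{cor-geometricallyreduced.a} applied over the $F$-pure ring $L$ shows that $S \otimes_R L$ is reduced. Since this holds for every field extension $L/R$, the algebra $S$ is geometrically reduced over $R$.

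No serious obstacles are anticipated: the crux is the interaction of $F$-purity with injectivity of $F_\varphi$, and the reduction of geometric reducedness to the absolute version via base change stability of the b-nil formally smooth property.
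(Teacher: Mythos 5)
Your proposal is correct and follows essentially the same route as the paper: part (1) uses the injectivity of $F_\varphi$ from \autoref{prop:b-nil-formally-smooth-injective-relative-Frobenius-projective} together with $F$-purity of $R$ (the paper phrases this as injectivity of the composite $F_S = F_\varphi \circ (F_R \otimes_R \id_S)$, while you argue elementwise, but it is the same argument), and part (2) is the identical reduction via base-change stability of b-nil formal smoothness and the fact that fields are $F$-pure.
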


\begin{proof}
\autoref{cor-geometricallyreduced.a} By \autoref{prop:b-nil-formally-smooth-injective-relative-Frobenius-projective}, the relative Frobenius $F_\varphi$ is injective. Since $F_R$ is universally injective, so is $F_R \otimes_R \id_S$. Then $F_S = F_\varphi \circ (F_R \otimes_R \id_S)$ is injective, and hence, $S$ is reduced.

\autoref{cor-geometricallyreduced.b} Since the property of being b-nil formally smooth is preserved under arbitrary base change, it suffices to show that if $R$ is a field and $R \to S$ is b-nil formally smooth, then $S$ is reduced. But this follows from \autoref{cor-geometricallyreduced.a} because fields have universally injective absolute Frobenius.
\end{proof}

We also obtain an analog of \autoref{prop:b-nil-formally-smooth-injective-relative-Frobenius-projective} for formal smoothness assuming the relative Frobenius is finitely presented.

\begin{proposition}
    \label{prop:formal-smoothness-fp-relFrob-injective-projective}
    Let $\varphi \colon R \to S$ be a map of $\mathbf{F}_p$-algebras such that $F_\varphi$ is a finitely presented ring map. Suppose $\varphi$ is formally smooth. Then $S$ is a projective $S^{(p)}$-module via $F_\varphi$, and so, $F_\varphi$ is faithfully flat and has an $S^{(p)}$-linear left inverse. In fact, if $f_1,\dots,f_n \in S$ generate $S$ as an $S^{(p)}$-algebra, then the canonical $S^{(p)}$-algebra surjection 
        \begin{align*}
            \frac{S^{(p)}[T_1,\dots,T_n]}{(T_1^p - (1 \otimes f_1),\dots, T_n^p - (1\otimes f_n))} &\twoheadrightarrow S\\
            \overline{T_i} &\mapsto f_i
        \end{align*}
        has an $S^{(p)}$-algebra right inverse.
\end{proposition}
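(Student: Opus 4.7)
The approach is to adapt the proof of \autoref{prop:b-nil-formally-smooth-injective-relative-Frobenius-projective} to the present setting, replacing the b-nil lifting property with ordinary formal smoothness. The main mechanism is the ``moreover'' clause of \autoref{lem:dominating-bounded-nil}: whenever $F_\varphi$ is finitely presented, the auxiliary algebra $S' = S^{(p)}[T_i]/(T_i^p - (1 \otimes f_i))$ can be taken to be finitely presented over $S^{(p)}$ with $F_{S'/R}$ still finitely presented, and the kernel of $S' \twoheadrightarrow S$ is not merely bounded nil but genuinely nilpotent. This allows the classical (square-zero) notion of formal smoothness to produce lifts, exactly where b-nil formal smoothness was used before.

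The first step is to establish injectivity of $F_\varphi$. Inductively build an inverse system
\[
\cdots \to A_{n+1} \to A_n \to \cdots \to A_0 = S
\]
of $R$-algebras by iterating \autoref{lem:dominating-bounded-nil}: at each stage, $A_{n+1} \twoheadrightarrow A_n$ is a surjection with nilpotent kernel and $F_{A_{n+1}/R}$ remains finitely presented, so the construction can continue. Using formal smoothness of $\varphi$, filter each nilpotent kernel into square-zero thickenings and successively lift the identity $S \to A_0 = S$ to obtain a compatible system of sections $S \to A_n$, and hence a map $S \to A \coloneqq \lim_n A_n$ splitting the projection $A \to A_0$. Base changing along $F_R$ yields a corresponding splitting $S^{(p)} \to \lim_n A_n^{(p)}$, and property \autoref{lem:dominating-bounded-nil.c} of \autoref{lem:dominating-bounded-nil} forces the componentwise relative Frobenius $\lim_n A_n^{(p)} \to \lim_n A_n$ to be injective. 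Chasing the resulting diagram gives that $F_\varphi$ is injective.

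Having injectivity, the rest proceeds as in the b-nil case. Since $F_\varphi$ is of finite type, pick finite generators $f_1, \dots, f_n$ of $S$ as an $S^{(p)}$-algebra and set
\[
A = \frac{S^{(p)}[T_1, \dots, T_n]}{(T_i^p - (1 \otimes f_i) \colon 1 \leq i \leq n)}.
\]
The surjection $A \twoheadrightarrow S$ sending $\overline{T_i} \mapsto f_i$ and acting by $F_\varphi$ on $S^{(p)}$ has nilpotent kernel by the moreover clause of \autoref{lem:dominating-bounded-nil}, so formal smoothness of $\varphi$ produces an $R$-algebra section $g \colon S \to A$. The argument of \autoref{claim:retract} now shows $g$ is $S^{(p)}$-linear: $g \circ F_S$ factors through $A^p \subset S^{(p)}$ as a map $h \colon S \to S^{(p)}$ satisfying $F_\varphi \circ h = F_S = F_\varphi \circ (F_R \otimes_R \id_S)$, and injectivity of $F_\varphi$ forces $h = F_R \otimes_R \id_S$. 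Thus $S$ is a direct summand as an $S^{(p)}$-module of the free $S^{(p)}$-module $A$, hence projective; faithful flatness of $F_\varphi$ follows since $F_\varphi$ is flat and its composition with $F_R \otimes_R \id_S$ equals the absolute Frobenius $F_S$, which is surjective on spectra. The $S^{(p)}$-linear left inverse then comes from \autoref{lem:projective-implies-split}. The main obstacle is step one: without the b-nil lifting property, recovering injectivity of $F_\varphi$ requires the finite-presentation hypothesis to iterate \autoref{lem:dominating-bounded-nil} while keeping all kernels nilpotent, so that the classical formal smoothness property is enough to build the compatible system of sections in the inverse limit.
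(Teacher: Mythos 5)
Your proposal is correct and follows essentially the same route as the paper: it adapts \autoref{prop:b-nil-formally-smooth-injective-relative-Frobenius-projective}, invoking the finitely presented clause of \autoref{lem:dominating-bounded-nil} so that all the kernels in the inverse-system and retraction constructions are nilpotent, at which point ordinary formal smoothness supplies the lifts and the remainder of the argument (injectivity of $F_\varphi$, the retraction claim, projectivity, and the splitting via \autoref{lem:projective-implies-split}) goes through verbatim. This matches the paper's proof, which the student has merely spelled out in slightly more detail.
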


\begin{proof}
    This is a straightforward adaptation of the argument given for b-nil formal smoothness in \autoref{prop:b-nil-formally-smooth-injective-relative-Frobenius-projective}. For the injectivity of $F_\varphi$ the only difference is that one uses the part of \autoref{lem:dominating-bounded-nil} for finitely presented relative Frobenius to get surjections $A_{n+1} \twoheadrightarrow A_n$ whose kernel is nilpotent and one then uses the lifting property of formal smoothness for surjections by nilpotent ideals. Next, the set of generators of $S$ over $S^{(p)}$ can be chosen to be finite, say $\{f_1,\dots,f_n\}$. Then the kernel of the $S^{(p)}$-algebra surjection 
    \begin{align*}
            \frac{S^{(p)}[T_1,\dots,T_n]}{(T_1^p - (1 \otimes f_1),\dots, T_n^p - (1\otimes f_n))} &\twoheadrightarrow S\\
            \overline{T_i} &\mapsto f_i
        \end{align*}
    is finitely generated since $S$ is a finitely presented $S^{(p)}$-algebra. This kernel is nilpotent by the proof of the finitely presented case of  \autoref{lem:dominating-bounded-nil}~\autoref{lem:dominating-bounded-nil.b}. The rest of the argument of \autoref{prop:b-nil-formally-smooth-injective-relative-Frobenius-projective} can now be repeated verbatim.
\end{proof}

\begin{remark}
    \label{rem:when-rel-Frob-fp}
    The relative Frobenius $F_\varphi$ of $\varphi \colon R \to S$ is finitely presented for instance when $F_R$ is finite and $F_S$ is finitely presented. Indeed, since $F_S = F_\varphi \circ (F_R \otimes_R \id_S)$ and $F_R \otimes_R \id_S$ is finite by base change, this is again an application of \cite[\href{https://stacks.math.columbia.edu/tag/00F4}{Tag 00F4}(4)]{stacks-project}.
\end{remark}


We now get the following necessary and sufficient characterization of b-nil formal smoothness.

\begin{theorem}
    \label{thm:characterizing-b-nil-formal-smoothness}
    Let $\varphi \colon R \to S$ be a map of $\mathbf{F}_p$-algebras. Then the following are equivalent:
    \begin{enumerate}
        \item\label{thm:characterizing-b-nil-formal-smoothness.a} $\varphi$ is b-nil formally smooth.
        \item\label{thm:characterizing-b-nil-formal-smoothness.b} There exists $\{f_i\}_{i \in I} \subset S$ such that the canonical $S^{(p)}$-algebra map 
        \[\frac{S^{(p)}[T_i \colon i \in I]}{(T_i^p - (1\otimes f_i) \colon i \in I)} \to S\] 
        that acts on $S^{(p)}$ via $F_\varphi$ and sends the class of $T_i$ to $f_i$ admits an $S^{(p)}$-algebra right-inverse. 
    \end{enumerate}
\end{theorem}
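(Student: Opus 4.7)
The plan is to prove (a)$\Rightarrow$(b) as an essentially immediate consequence of \autoref{prop:b-nil-formally-smooth-injective-relative-Frobenius-projective}, and to prove (b)$\Rightarrow$(a) by an explicit Frobenius-twist construction that exploits the assumption $I^{[p]}=0$.

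For (a)$\Rightarrow$(b), I would take $\{f_i\}_{i\in I}$ to be any generating set of $S$ as an $S^{(p)}$-algebra (for instance, indexed by $S$ itself). Then \autoref{prop:b-nil-formally-smooth-injective-relative-Frobenius-projective} directly produces the required $S^{(p)}$-algebra right inverse.

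For (b)$\Rightarrow$(a), by \autoref{lemma-standard}~\autoref{lem:lemma-standard.7} it suffices to produce a lift for pairs $(A,I)$ with $I^{[p]}=0$. The crux is that in such a situation the absolute Frobenius of $A$ factors canonically through the projection: there is a well-defined ring map
\[
\bar F\colon A/I \longrightarrow A,\qquad \bar a \longmapsto a^p,
\]
since two lifts of a class differ by an element of $I$ whose $p$-th power lies in $I^{[p]}=0$, and since $(a+b)^p = a^p + b^p$ in characteristic $p$. Using $\bar F$, I would first build an $R$-algebra map $g\colon S^{(p)}\to A$ lifting $f\circ F_\varphi$ along $A\twoheadrightarrow A/I$, defined on elementary tensors by
\[
g(r\otimes s) \;=\; r_A \cdot a_s^p,
\]
where $r_A \in A$ denotes the image of $r$ and $a_s\in A$ is any lift of $f(s)\in A/I$. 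The main routine verification is that $g$ is a well-defined $R$-algebra homomorphism: the defining relation $r^p x\otimes s = x\otimes rs$ in $F_*R\otimes_R S$ is respected because $a_{rs}$ can be taken to be $r_A a_s$, yielding $a_{rs}^p = r_A^p a_s^p$; bilinearity and multiplicativity follow similarly using $I^{[p]}=0$.

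Next, for each index $i$ appearing in hypothesis (b), I would pick any lift $a_{f_i}\in A$ of $f(f_i)$ and extend $g$ to an $R$-algebra map
\[
\widetilde g\colon B\coloneqq \frac{S^{(p)}[T_i\colon i\in I]}{\bigl(T_i^p-(1\otimes f_i)\colon i\in I\bigr)} \longrightarrow A,\qquad T_i\longmapsto a_{f_i},
\]
which is well-defined because $a_{f_i}^p = g(1\otimes f_i)$. Finally, I set $\widetilde f\coloneqq \widetilde g\circ \sigma$, where $\sigma\colon S\to B$ is the $S^{(p)}$-algebra right inverse supplied by (b). A direct check shows that $\pi\circ \widetilde g = f\circ \pi_1$, where $\pi\colon A\twoheadrightarrow A/I$ and $\pi_1\colon B\twoheadrightarrow S$ denotes the canonical surjection: on $S^{(p)}$ both sides equal $f\circ F_\varphi$, and on each $T_i$ both sides equal $f(f_i)$. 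Therefore $\pi\circ \widetilde f = f\circ \pi_1\circ \sigma = f$, exhibiting $\widetilde f$ as the desired lift; it is an $R$-algebra map because $\sigma$ is $S^{(p)}$-linear and $\widetilde g$ is $R$-linear. The main obstacle---really the only non-formal step---is identifying the correct formula for $g$ and verifying its well-definedness on $S^{(p)}$; once this Frobenius-twist lift of $f\circ F_\varphi$ is in place, the section $\sigma$ handles the descent from the Frobenius twist back to $S$ essentially for free.
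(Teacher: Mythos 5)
Your proposal is correct and follows essentially the same route as the paper: (a)$\Rightarrow$(b) is exactly the citation of \autoref{prop:b-nil-formally-smooth-injective-relative-Frobenius-projective}, and your map $g$ and its extension $\widetilde g$ to $S^{(p)}[T_i]/(T_i^p-(1\otimes f_i))$, composed with the section $\sigma$, reproduce the paper's $\tau$, $\eta$, and $\eta\circ\alpha$. The only difference is that you construct $g$ by hand on elementary tensors, whereas the paper packages this Frobenius-factorization step as \autoref{lem:lifting-lemma-Frobenius}~\autoref{lem:lifting-lemma-Frobenius.a} (so it can be reused elsewhere), which is not a substantive divergence.
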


The proof of \autoref{thm:characterizing-b-nil-formal-smoothness} (as well as \autoref{thm:relative-Frobenius-iso-b-nil formally-etale}) will utilize the following lifting lemma.

\begin{lemma}
\label{lem:lifting-lemma-Frobenius}
    Suppose we are given a commutative diagram of rings of prime characteristic $p> 0$
\[\begin{tikzcd}[cramped]
	{A/I} & S \\
	A & R
	\arrow["f"', from=1-2, to=1-1]
	\arrow[two heads, from=2-1, to=1-1]
	\arrow["\varphi"', from=2-2, to=1-2]
	\arrow[ from=2-2, to=2-1]
\end{tikzcd}\]
in which $I^{[p]} = 0$. Then:
\begin{enumerate}
    \item There is a ring map $\tau \colon S^{(p)} \to A$ which makes the diagram
\[\begin{tikzcd}
	{A/I} & S & {S^{(p)}} \\
	A & R
	\arrow["f"',from=1-2, to=1-1]
	\arrow["{F_{\varphi}}"', from=1-3, to=1-2]
	\arrow[color={rgb,255:red,214;green,92;blue,92}, "\tau"', dashed, from=1-3, to=2-1]
	\arrow[two heads, from=2-1, to=1-1]
	\arrow[ from=2-2, to=1-2]
	\arrow[from=2-2, to=1-3]
	\arrow[from=2-2, to=2-1]
\end{tikzcd}\]
commute such that if $s \in S$ and $\alpha \in A$ is \emph{any} lift of $f(s)$, then $\tau(1\otimes s) = \alpha^p$.\label{lem:lifting-lemma-Frobenius.a}
\item Given maps $g, h: S \to A$ which both make the diagram 
\[\begin{tikzcd}
	{A/I} & S \\
	A & R
	\arrow[from=1-2, to=1-1]
	\arrow[from=1-2, to=2-1]
	\arrow[two heads, from=2-1, to=1-1]
	\arrow[from=2-2, to=1-2]
	\arrow[from=2-2, to=2-1]
\end{tikzcd}\]
commute, we have $g \circ F_{\varphi} = h \circ F_{\varphi}$.\label{lem:lifting-lemma-Frobenius.b}
\end{enumerate}
\end{lemma}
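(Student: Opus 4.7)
The plan for part \autoref{lem:lifting-lemma-Frobenius.a} is to define $\tau$ by the explicit formula
\[\tau(a \otimes s) := \psi(a) \cdot \alpha_s^p,\]
where $\psi \colon R \to A$ denotes the given structural map in the diagram, and for each $s \in S$ we pick an arbitrary set-theoretic lift $\alpha_s \in A$ of $f(s) \in A/I$. The crucial observation is that $\alpha_s^p$ is canonical: if $\alpha$ and $\alpha'$ both lift $f(s)$, then $\alpha - \alpha' \in I$, and because $(x+y)^p = x^p + y^p$ in characteristic $p$ one has
\[\alpha^p - (\alpha')^p = (\alpha - \alpha')^p \in I^{[p]} = 0.\]
This immediately yields the asserted formula $\tau(1 \otimes s) = \alpha^p$ and reduces every subsequent verification to an elementary check.

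Next I would verify that this prescription descends from $R \times S$ through the Frobenius-twisted tensor product $F_*R \otimes_R S$ and defines a ring homomorphism. The balancing relation $(r^p a) \otimes s = a \otimes \varphi(r) s$ is matched by taking $\psi(r)\alpha_s$ as a lift of $f(\varphi(r)s)$, yielding the identity $\psi(r^p a)\,\alpha_s^p = \psi(a)(\psi(r)\alpha_s)^p$. Additivity in $s$ follows from the fact that $\alpha_{s_1} + \alpha_{s_2}$ lifts $f(s_1+s_2)$ combined once more with the additivity of $p$-th powers, and multiplicativity follows analogously from $\alpha_{s_1}\alpha_{s_2}$ lifting $f(s_1 s_2)$. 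Commutativity of the full diagram is then built in: reducing $\tau(a \otimes s) = \psi(a)\alpha_s^p$ modulo $I$ gives $f(\varphi(a)s^p) = f(F_\varphi(a \otimes s))$, while the composition $R \to S^{(p)} \xrightarrow{\tau} A$ sends $r \mapsto r \otimes 1 \mapsto \psi(r)$.

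For part \autoref{lem:lifting-lemma-Frobenius.b}, the argument is a one-liner in the same spirit: since $g$ and $h$ both lift $f$, the difference $g(s) - h(s)$ lies in $I$ for every $s \in S$, so
\[g(s)^p - h(s)^p = (g(s) - h(s))^p \in I^{[p]} = 0.\]
Because $g$ and $h$ are $R$-algebra maps and $F_\varphi(a \otimes s) = \varphi(a)\,s^p$, both compositions $g \circ F_\varphi$ and $h \circ F_\varphi$ send $a \otimes s$ to $\psi(a)\,g(s)^p = \psi(a)\,h(s)^p$.

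The only genuine obstacle is the ring-map verification in \autoref{lem:lifting-lemma-Frobenius.a}; each additive, multiplicative, and balancing check, however, collapses to the same repeated application of $I^{[p]} = 0$ together with the additivity of the $p$-th power map, so I do not anticipate any real difficulty beyond bookkeeping.
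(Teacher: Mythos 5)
Your proof is correct and is essentially the paper's own argument: the map you write down, $\tau(a\otimes s)=\psi(a)\,\alpha_s^p$, is exactly the paper's $\tau$, and the key observation in both parts --- that $p$-th powers of lifts are independent of the choice of lift because $I^{[p]}=0$ together with additivity of Frobenius --- is the same. The only difference is presentational: the paper avoids your hands-on balancedness/multiplicativity checks by noting that $F_A$ factors through a ring map $k\colon A/I\to A$, $a+I\mapsto a^p$, and then feeding the ring map $\psi=k\circ f$ into the universal property of $S^{(p)}=F_*R\otimes_R S$, while part (b) is argued identically in both.
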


\begin{proof}
    For \autoref{lem:lifting-lemma-Frobenius.a}, by the universal property of the tensor product, giving a ring map $\tau \colon S^{(p)} \to A$ making the diagram commute is equivalent to giving an $R$-algebra map $\psi: S \to A$ such that $\psi(s) + I = f(s)^p$ for all $s \in S$. But note that $F_A : A \to A$ factors through the map 
    \begin{align*}
        k : A/I &\to A\\
        a + I &\mapsto a^p
    \end{align*}
    since $I^{[p]} = 0$. The composition $A/I \xrightarrow{k} A \twoheadrightarrow A/I$ is the absolute Frobenius on $A/I$. Define $\psi \coloneqq k \circ f$. This is an $R$-algebra map which by construction satisfies $\psi(s) + I = f(s)^p$. If $s \in S$ and $\alpha \in A$ such that $f(s) = \alpha + I$, then $\psi(s) = k(\alpha + I) = \alpha^p$, and so, $\tau(1\otimes s) = \psi(s) = \alpha^p$.

    For \autoref{lem:lifting-lemma-Frobenius.b}, showing that $g \circ F_\varphi = h \circ F_\varphi$ is equivalent to showing $g \circ F_S = h \circ F_S$ by the universal property of the tensor product. This is true because for $s \in S$ we have $g(s)-h(s) \in I$ and so $g(s^p) - h(s^p) = (g(s)-h(s))^p  = 0$ since $I^{[p]} = 0$.
\end{proof}

\begin{remark}
\label{rem:surjective-rel-Frobenius-b-nil-formally-unramified}
    \autoref{lem:lifting-lemma-Frobenius}~\autoref{lem:lifting-lemma-Frobenius.b} readily implies that if $F_\varphi$ is an epimorphism, then $\varphi$ is b-nil formally unramified hence formally unramified. The converse also holds and will be published in future work.
\end{remark}

\begin{proof}[Proof of \autoref{thm:characterizing-b-nil-formal-smoothness}]
    We have already shown \autoref{thm:characterizing-b-nil-formal-smoothness.a}$\implies$\autoref{thm:characterizing-b-nil-formal-smoothness.b} in \autoref{prop:b-nil-formally-smooth-injective-relative-Frobenius-projective}. We now show the converse. Consider a solid commutative diagram 
    \[\begin{tikzcd}[cramped]
	{A/I} & S \\
	A & R
	\arrow["\phi"', from=1-2, to=1-1]
	\arrow[two heads, from=2-1, to=1-1]
	\arrow[from=2-2, to=1-2]
	\arrow[from=2-2, to=2-1]
\end{tikzcd}\]
where $I \subset A$ is an ideal such that $I^{[p]} = 0$. By \autoref{lem:lifting-lemma-Frobenius}~\autoref{lem:lifting-lemma-Frobenius.a}, there is an $R$-algebra map $\tau \colon S^{(p)} \to A$ such that $\im(\tau) \subset A^p$ and such that
\begin{equation}
\label{eq:diagram}
\begin{tikzcd}
	{A/I} & S & {S^{(p)}} \\
	A & R
	\arrow["\phi"', from=1-2, to=1-1]
	\arrow["{F_{\varphi}}"', from=1-3, to=1-2]
	\arrow[color={rgb,255:red,214;green,92;blue,92}, "\tau"', dashed, from=1-3, to=2-1]
	\arrow[from=2-1, to=1-1]
	\arrow[ from=2-2, to=1-2]
	\arrow[from=2-2, to=1-3]
	\arrow[from=2-2, to=2-1]
\end{tikzcd}
\end{equation}
commutes.

Let $\beta \colon S^{(p)}[T_i \colon i \in I]/(T_i^p - (1\otimes f_i) \colon i \in I) \to S$ be the $S^{(p)}$-algebra map that sends the class of $T_i$ to $f_i$. Consider the diagram
\[\begin{tikzcd}[cramped]
	{A/I} & {\frac{S^{(p)}[T_i \colon i \in I]}{(T_i^p - (1\otimes f_i) \colon i \in I)}} \\
	A & {S^{(p)}}
	\arrow["{\phi \circ \beta}"', from=1-2, to=1-1]
	\arrow[two heads, from=2-1, to=1-1]
	\arrow[hook, from=2-2, to=1-2]
	\arrow["\tau", from=2-2, to=2-1],
\end{tikzcd}\]
which commutes because of \autoref{eq:diagram} and the fact that $\beta$ is an $S^{(p)}$-algebra map.
If $g_i \in A$ such that $g_i + I = \phi \circ \beta(\overline{T_i}) = \phi(f_i)$, then $\tau(1 \otimes f_i) = g_i^p$ by \autoref{lem:lifting-lemma-Frobenius}~\autoref{lem:lifting-lemma-Frobenius.a} again. Thus,
\begin{align*}
    \eta \colon \frac{S^{(p)}[T_i \colon i \in I]}{(T_i^p - (1\otimes f_i) \colon i \in I)} &\to A\\
    \overline{T_i}&\mapsto g_i\\
    S^{(p)}\ni \lambda &\mapsto \tau(\lambda)
\end{align*}
is an $S^{(p)}$-algebra lift of $\phi \circ \beta$ along $A \twoheadrightarrow A/I$. But by hypothesis, $\beta$ has an $S^{(p)}$-algebra right-inverse $\alpha \colon S \to S^{(p)}[T_i \colon i \in I]/(T_i^p - (1\otimes f_i) \colon i \in I)$. It is then clear that $\eta \circ \alpha \colon S \to A$ is an $R$-algebra lift of $\phi \colon S \to A/I$ along $A \twoheadrightarrow A/I$. 
\end{proof}

\autoref{thm:characterizing-b-nil-formal-smoothness} allows us to improve \cite[$0_{\text{IV}}$,~Th\'eor\`eme~21.2.7]{EGAIV_I}.

\begin{corollary}
    \label{cor:p-basis-implies-bnil-formally-smooth}
    Let $\varphi \colon R \to S$ be a map of $\mathbf{F}_p$-algebras. Suppose that the following conditions are satisfied:
    \begin{enumerate}[(i)]
        \item $F_\varphi$ is injective.
        \item $S$ has a $p$-basis over $R$.
    \end{enumerate}
    Then $\varphi$ is b-nil formally smooth.
\end{corollary}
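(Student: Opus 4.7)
The plan is to apply \autoref{thm:characterizing-b-nil-formal-smoothness}, so the task reduces to producing an indexed family $\{f_i\} \subset S$ such that the canonical $S^{(p)}$-algebra surjection
\[
\frac{S^{(p)}[T_i \colon i \in I]}{(T_i^p - (1 \otimes f_i) \colon i \in I)} \longrightarrow S, \qquad \overline{T_i} \mapsto f_i,
\]
(acting on $S^{(p)}$ via $F_\varphi$) admits an $S^{(p)}$-algebra right-inverse. The natural candidate for $\{f_i\}$ is the given $p$-basis $\{s_i\}$ of $S$ over $R$.

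First I would record that $F_\varphi \colon S^{(p)} \to S$ factors as $S^{(p)} \twoheadrightarrow R[S^p] \hookrightarrow S$, where $R[S^p]$ is the image of $F_\varphi$. The surjectivity onto $R[S^p]$ is automatic from the definition of $R[S^p]$, and the injectivity is precisely hypothesis (i). Hence $F_\varphi$ induces an isomorphism of rings $S^{(p)} \xrightarrow{\sim} R[S^p]$, sending $1 \otimes s_i \mapsto s_i^p$.

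Next I would use hypothesis (ii) together with \autoref{lem:p-basis-another-characterization} to identify
\[
\frac{R[S^p][T_i \colon i \in I]}{(T_i^p - s_i^p \colon i \in I)} \xrightarrow{\;\sim\;} S, \qquad \overline{T_i} \mapsto s_i,
\]
as an isomorphism of $R[S^p]$-algebras. Transporting the $R[S^p]$-algebra structure on the source to an $S^{(p)}$-algebra structure along the isomorphism $F_\varphi \colon S^{(p)} \xrightarrow{\sim} R[S^p]$, the relation $T_i^p - s_i^p$ becomes $T_i^p - (1 \otimes s_i)$. Thus the induced map
\[
\frac{S^{(p)}[T_i \colon i \in I]}{(T_i^p - (1 \otimes s_i) \colon i \in I)} \longrightarrow S, \qquad \overline{T_i} \mapsto s_i,
\]
is an isomorphism of $S^{(p)}$-algebras; its inverse is in particular a right-inverse. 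Invoking \autoref{thm:characterizing-b-nil-formal-smoothness} with $f_i = s_i$ then yields that $\varphi$ is b-nil formally smooth.

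There is no real obstacle here: the argument is essentially a bookkeeping translation between the $p$-basis characterization of \autoref{lem:p-basis-another-characterization} (which lives over $R[S^p]$) and the right-inverse characterization of \autoref{thm:characterizing-b-nil-formal-smoothness} (which lives over $S^{(p)}$). The injectivity of $F_\varphi$ is exactly what one needs to pass freely between these two base rings. The only point that requires a moment's care is checking that, under the identification $S^{(p)} \cong R[S^p]$, the element $1 \otimes s_i$ is sent to $s_i^p$, so that the two presentations really do match.
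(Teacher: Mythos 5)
Your proposal is correct and follows essentially the same route as the paper: identify $S^{(p)} \cong R[S^p]$ via the injectivity of $F_\varphi$ (noting $1 \otimes s_i \mapsto s_i^p$), use \autoref{lem:p-basis-another-characterization} to get the $R[S^p]$-algebra isomorphism with $S$, transport it to an $S^{(p)}$-algebra isomorphism $S^{(p)}[T_i \colon i \in I]/(T_i^p - (1\otimes s_i) \colon i \in I) \cong S$, and conclude by \autoref{thm:characterizing-b-nil-formal-smoothness}. No gaps; this matches the paper's argument step for step.
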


Note that $F_\varphi$ being injective is a necessary condition for $\varphi$ to be b-nil formally smooth by \autoref{prop:b-nil-formally-smooth-injective-relative-Frobenius-projective}.

\begin{proof}
    By assumption, $S^{(p)} \cong \im(F_\varphi) = R[S^p]$. Let $\{f_i\}_{i \in I} \subset S$ be a $p$-basis over $R$. Note that $S^{(p)}\ni 1 \otimes f_i \mapsto f_i^p \in R[S^p]$ under the isomorphism. Thus, if $\{T_i \colon i \in I\}$ is a family of indeterminates, then the isomorphism $S^{(p)} \xrightarrow{\cong} R[S^p]$ induces an isomorphism 
    \begin{align*}
        \frac{S^{(p)}[T_i \colon i \in I]}{(T_i^p - (1\otimes f_i) \colon i \in I)} &\longrightarrow \frac{R[S^p][T_i \colon i \in I]}{(T_i^p - f_i^p \colon i \in I)}\\
        \overline{T_i} &\longmapsto \overline{T_i}.
    \end{align*}
    Since there is an $R[S^p]$-algebra isomorphism $R[S^p][T_i \colon i \in I]/(T_i^p - f_i^p \colon i \in I) \cong S$ by \autoref{lem:p-basis-another-characterization}, we get an $S^{(p)}$-algebra isomorphism $S^{(p)}[T_i \colon i \in I]/(T_i^p - (1\otimes f_i) \colon i \in I) \cong S$. Then $R \to S$ is b-nil formally smooth by \autoref{thm:characterizing-b-nil-formal-smoothness}.
\end{proof}

We also get the following large class of ring maps for which the b-nil formally smooth and formally smooth properties coincide. Although this result is a corollary of \autoref{thm:characterizing-b-nil-formal-smoothness}, we state is as a theorem.

\begin{theorem}
    \label{thm:b-nil-formally-smooth-fp-relFrob}
    Let $\varphi \colon R \to S$ be a map of $\mathbf{F}_p$-algebras such that $F_\varphi$ is finitely presented (for e.g., if $F_R$ is finite and $F_S$ is finitely presented). Then $\varphi$ is b-nil formally smooth if and only if $\varphi$ is formally smooth.
\end{theorem}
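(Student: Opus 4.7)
The proof should be essentially a one-line combination of two earlier results, so I would organize it as follows.

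First, the implication that b-nil formally smooth implies formally smooth is immediate from \autoref{lemma-standard}~\autoref{lem:lemma-standard.1} and requires no hypotheses on $F_\varphi$. So the content is the converse: assume $\varphi$ is formally smooth and $F_\varphi$ is finitely presented, and show $\varphi$ is b-nil formally smooth.

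The plan is to apply \autoref{prop:formal-smoothness-fp-relFrob-injective-projective} and then \autoref{thm:characterizing-b-nil-formal-smoothness}. Since $F_\varphi \colon S^{(p)} \to S$ is finitely presented, in particular $S$ is a finitely generated $S^{(p)}$-algebra, so we can pick finitely many generators $f_1, \ldots, f_n \in S$ of $S$ over $S^{(p)}$. Applying \autoref{prop:formal-smoothness-fp-relFrob-injective-projective} to $\varphi$ with this choice of generators gives that the canonical $S^{(p)}$-algebra surjection
\[
\frac{S^{(p)}[T_1, \ldots, T_n]}{(T_i^p - (1 \otimes f_i) \colon i = 1, \ldots, n)} \twoheadrightarrow S, \qquad \overline{T_i} \mapsto f_i,
\]
admits an $S^{(p)}$-algebra right-inverse. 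But this is precisely condition \autoref{thm:characterizing-b-nil-formal-smoothness.b} of \autoref{thm:characterizing-b-nil-formal-smoothness} (with finite index set $I = \{1, \ldots, n\}$), so that theorem concludes that $\varphi$ is b-nil formally smooth.

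There is no real obstacle here; the theorem is a clean corollary of the structural results already established. The only minor point worth noting is the parenthetical remark that if $F_R$ is finite and $F_S$ is finitely presented, then $F_\varphi$ is finitely presented; this is justified in \autoref{rem:when-rel-Frob-fp} via the factorization $F_S = F_\varphi \circ (F_R \otimes_R \id_S)$ and \cite[\href{https://stacks.math.columbia.edu/tag/00F4}{Tag 00F4}(4)]{stacks-project}, and can be cited directly.
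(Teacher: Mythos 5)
Your proof is correct and is essentially the paper's own argument: the paper likewise deduces the nontrivial direction by choosing finitely many algebra generators $f_1,\dots,f_n$ of $S$ over $S^{(p)}$, invoking \autoref{prop:formal-smoothness-fp-relFrob-injective-projective} to get an $S^{(p)}$-algebra right inverse of the canonical surjection, and then concluding via \autoref{thm:characterizing-b-nil-formal-smoothness}. No differences worth noting.
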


\begin{proof}
    The interesting implication again is that formal smoothness implies b-nil formal smoothness. Let $f_1,\dots,f_n \in S$ be algebra generators over $S^{(p)}$. The canonical $S^{(p)}$-algebra surjection 
        \begin{align*}
            \frac{S^{(p)}[T_1,\dots,T_n]}{(T_1^p - (1 \otimes f_1),\dots, T_n^p - (1\otimes f_n))} &\twoheadrightarrow S\\
            \overline{T_i} &\mapsto f_i
        \end{align*}
        has an $S^{(p)}$-algebra right inverse by \autoref{prop:formal-smoothness-fp-relFrob-injective-projective}. Then $\varphi$ is b-nil formally smooth by \autoref{thm:characterizing-b-nil-formal-smoothness}.
\end{proof}



\subsection{B-nil formally \'etale maps in prime characteristic} We can now prove our main result on b-nil formally \'etale maps.

\begin{theorem}
\label{thm:relative-Frobenius-iso-b-nil formally-etale}
    Let $\varphi \colon R \to S$ be a map of $\mathbf{F}_p$-algebras. The following are equivalent.
    \begin{enumerate}
        \item $\varphi$ is b-nil formally \'etale.\label{thm:relative-Frobenius-iso-b-nil formally-etale.a}
        \item The relative Frobenius $F_{\varphi}$ is b-nil formally \'etale.\label{thm:relative-Frobenius-iso-b-nil formally-etale.b}
        \item The relative Frobenius $F_{\varphi}$ is b-nil formally smooth.\label{thm:relative-Frobenius-iso-b-nil formally-etale.b'}
        \item The relative Frobenius $F_{\varphi}$ is an isomorphism.\label{thm:relative-Frobenius-iso-b-nil formally-etale.c}
    \end{enumerate} 
\end{theorem}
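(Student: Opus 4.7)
The plan is to prove the four-way equivalence by closing the cycle of implications $(1) \Rightarrow (2) \Rightarrow (3) \Rightarrow (4) \Rightarrow (1)$. The first two links are essentially immediate: $(1) \Rightarrow (2)$ is \autoref{lem:lemma-standard.8}, and $(2) \Rightarrow (3)$ holds trivially since the b-nil formally \'etale lifting property subsumes the existence condition defining b-nil formal smoothness.

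For $(4) \Rightarrow (1)$, assume $F_\varphi$ is an isomorphism and consider any lifting problem for $\varphi$ with ideal $I$ satisfying $I^{[p]} = 0$ (sufficient by \autoref{lem:lemma-standard.7}). I will invoke \autoref{lem:lifting-lemma-Frobenius.a} to produce a ring map $\tau \colon S^{(p)} \to A$ lifting the composite $f \circ F_\varphi$. Then $\widetilde{f} \coloneqq \tau \circ F_\varphi^{-1} \colon S \to A$ serves as an $R$-algebra lift of $f$; the $R$-algebra property follows from the identity $F_\varphi(r \otimes 1) = \varphi(r)$ combined with $\tau$ being an $R$-algebra map, so that $\widetilde{f}(\varphi(r)) = \tau(r \otimes 1)$ equals the image of $r$ in $A$. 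Uniqueness of $\widetilde{f}$ is furnished by \autoref{lem:lifting-lemma-Frobenius.b}: any two lifts $g, h$ must satisfy $g \circ F_\varphi = h \circ F_\varphi$, and bijectivity of $F_\varphi$ forces $g = h$.

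The heart of the proof is $(3) \Rightarrow (4)$. \autoref{rem:relative-Frob-b-nil-smooth-projective} applied to $F_\varphi$ shows that $S$ is a projective $S^{(p)}$-module via $F_\varphi$, so $F_\varphi$ is split injective and faithfully flat. For surjectivity, I analyze the multiplication $\mu \colon S \otimes_{S^{(p)}} S \twoheadrightarrow S$: its kernel $J$ satisfies $J^{[p]} = 0$, because each generator $s \otimes 1 - 1 \otimes s$ has $p$-th power $s^p \otimes 1 - 1 \otimes s^p$, which vanishes via the $S^{(p)}$-balance since $s^p = F_\varphi(1 \otimes s)$ lies in $\im(F_\varphi)$. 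The two natural $S^{(p)}$-algebra sections $e_1(s) = s \otimes 1$ and $e_2(s) = 1 \otimes s$ of $\mu$, together with any $S^{(p)}$-algebra section supplied by b-nil formal smoothness of $F_\varphi$, will be shown to coincide; the resulting equality $e_1 = e_2$ realizes $F_\varphi$ as a ring epimorphism, and faithful flatness upgrades this to an isomorphism.

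The main obstacle is extracting the equality $e_1 = e_2$ from the mere existence of a lift (as opposed to uniqueness, which would be immediate if we had b-nil formal unramifiedness of $F_\varphi$ in hand). I anticipate addressing this by applying \autoref{thm:characterizing-b-nil-formal-smoothness} to $F_\varphi$ itself, which yields an explicit structured section of $S'[T_i]/(T_i^p - 1 \otimes g_i) \twoheadrightarrow S$ (with $S' = F_*S^{(p)} \otimes_{S^{(p)}} S$), and comparing this section with $e_1, e_2$ modulo $J$ using the Frobenius identity $(x - y)^p = x^p - y^p$ in characteristic $p$ to force the desired equality.
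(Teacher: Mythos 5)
Your treatment of $(1)\Rightarrow(2)$, $(2)\Rightarrow(3)$ and $(4)\Rightarrow(1)$ is correct and coincides with the paper's proof: $(1)\Rightarrow(2)$ is the cited part of \autoref{lemma-standard}, and for $(4)\Rightarrow(1)$ the paper likewise reduces to $I^{[p]}=0$, takes the map $\tau$ from \autoref{lem:lifting-lemma-Frobenius}, uses $\tau\circ F_\varphi^{-1}$ for existence and part (b) of that lemma plus surjectivity of $F_\varphi$ for uniqueness. Your preliminary observations for $(3)\Rightarrow(4)$ are also fine: \autoref{rem:relative-Frob-b-nil-smooth-projective} gives projectivity, hence split injectivity and faithful flatness of $F_\varphi$; the kernel $J$ of $\mu\colon S\otimes_{S^{(p)}}S\to S$ does satisfy $J^{[p]}=0$ (the generators $s\otimes 1-1\otimes s$ have vanishing $p$-th powers because $s^p=F_\varphi(1\otimes s)$ balances across the tensor product, and additivity of Frobenius extends this to all of $J$); and a faithfully flat ring epimorphism is indeed an isomorphism.

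The gap is exactly at the step you flag: extracting $e_1=e_2$. For the test pair $(S\otimes_{S^{(p)}}S,\,J)$ both $e_1$ and $e_2$ are themselves lifts, so b-nil formal \emph{smoothness} of $F_\varphi$ (existence of lifts) gives no information whatsoever about this pair; what you would need is uniqueness, i.e.\ b-nil formal \emph{unramifiedness} of $F_\varphi$, which is not among your hypotheses (and whose characterization the paper explicitly defers to future work, cf.\ \autoref{rem:surjective-rel-Frobenius-b-nil-formally-unramified}). Your proposed mechanism does not supply it: reducing modulo $J$ tells you nothing, since $\mu\circ e_1=\mu\circ e_2=\operatorname{id}$ already, and the identity $(x-y)^p=x^p-y^p$ only re-proves that $e_1(s)-e_2(s)$ lies in $J$ and $J^{[p]}=0$, facts you already have; likewise \autoref{lem:lifting-lemma-Frobenius}~\autoref{lem:lifting-lemma-Frobenius.b} applied here only gives agreement on $\operatorname{im}(F_\varphi)$, which is automatic. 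Moreover, the section produced by \autoref{thm:characterizing-b-nil-formal-smoothness} applied to $F_\varphi$ lives over the ring $F_*S^{(p)}\otimes_{S^{(p)}}S$ with its $T_i$-presentation, and you give no way of transporting it into $S\otimes_{S^{(p)}}S$ to compare with $e_1,e_2$. The paper closes this step by a genuinely different input: base changing $F_\varphi$ along any map $S^{(p)}\to k$ with $k$ perfect, using \autoref{cor-geometricallyreduced} together with pure inseparability of $F_\varphi$ to see that $k\to S\otimes_{S^{(p)}}k$ is an isomorphism, and then a rank-one argument for the projective $S^{(p)}$-module $S$ (localize, invoke local freeness, detect the rank after passing to the algebraic closure of the residue field) to conclude that $F_\varphi$ is an isomorphism. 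Some argument of this fiberwise/rank-detection nature (or an actual proof that $F_\varphi$ is b-nil formally unramified) is required; as written, your epimorphism route does not close.
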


\begin{proof}
The implication \autoref{thm:relative-Frobenius-iso-b-nil formally-etale.a} $\implies$ \autoref{thm:relative-Frobenius-iso-b-nil formally-etale.b} is part \autoref{lem:lemma-standard.8} of \autoref{lemma-standard}. We now establish \autoref{thm:relative-Frobenius-iso-b-nil formally-etale.c} $\implies$ \autoref{thm:relative-Frobenius-iso-b-nil formally-etale.a}. By \autoref{lemma-standard}~\autoref{lem:lemma-standard.7} it suffices to show that for any  commutative diagram 
\[\begin{tikzcd}[cramped]
	{A/I} & S \\
	A & R
	\arrow["f"', from=1-2, to=1-1]
	\arrow[two heads, from=2-1, to=1-1]
	\arrow["\varphi"', from=2-2, to=1-2]
	\arrow["j", from=2-2, to=2-1],
\end{tikzcd}\]
where $I \subset A$ is an ideal such that $I^{[p]} = 0$, there exists a unique $R$-algebra lift $\widetilde{f} \colon S \to A$ of $f$ along $A \twoheadrightarrow A/I$. For existence, let $a: S^{(p)} \to A$ be the ring map found in \autoref{lem:lifting-lemma-Frobenius.a} of \autoref{lem:lifting-lemma-Frobenius}. Then $a \circ F_{\varphi}^{-1}$ works. For uniqueness, if $g, h$ are two $R$-algebra lifts of $f$ along $A \twoheadrightarrow A/I$, then by \autoref{lem:lifting-lemma-Frobenius.b} of \autoref{lem:lifting-lemma-Frobenius}, we have $g \circ F_\varphi = h \circ F_\varphi$ hence $g = h$ as $F_{\varphi}$ is surjective. 

Clearly \autoref{thm:relative-Frobenius-iso-b-nil formally-etale.b} $\implies$ \autoref{thm:relative-Frobenius-iso-b-nil formally-etale.b'}. It remains to show \autoref{thm:relative-Frobenius-iso-b-nil formally-etale.b'} $\implies$ \autoref{thm:relative-Frobenius-iso-b-nil formally-etale.c}. So assume $\varphi \colon R \to S$ has b-nil formally smooth relative Frobenius $F_{\varphi}$. We know that $S$ is a projective $S^{(p)}$-module via $F_{\varphi}$ by \autoref{rem:relative-Frob-b-nil-smooth-projective}, and $F_{\varphi}$ is a universal homeomorphism on spectra. Also, for any ring map $S^{(p)} \to k$ where $k$ is a perfect field, the base change $k \to S \otimes _{S^{(p)}} k$ of $F_{\varphi}$ to $k$ is b-nil formally smooth (\autoref{lemma-standard}\autoref{lem:lemma-standard.3}) and $\operatorname{Spec}(S \otimes _{S^{(p)}} k)$ is a one point space. By \autoref{cor-geometricallyreduced}, we conclude that $S \otimes _{S^{(p)}} k$ is a field since it is reduced, and it is a purely inseparable (algebraic) extension of $k$ by the fact that $F_{\varphi}$ is purely inseparable. Since $k$ is perfect we deduce that $k \to S \otimes _{S^{(p)}} k$ is an isomorphism. 

Now the result is formal. We have a ring map $B \to C$ such that $C$ is projective as a $B$-module and for every map $B \to k$ where $k$ is perfect, the base change $k \to C \otimes _B k$ is an isomorphism. We claim that it follows that $B \to C$ is an isomorphism. Indeed, we may assume $B$ is a local ring, in which case $C$ is a free $B$-module \cite[\href{https://stacks.math.columbia.edu/tag/0593}{Tag 0593}]{stacks-project} and we see that $C$ is in fact free of rank one by base changing to the algebraic closure of the residue field of $B$. Then $B \to C$ is a map of free rank one $B$-modules which induces an isomorphism after base change to the algebraic closure of the residue field of $B$, and so $B \to C$ is an isomorphism.
\end{proof}


Since b-nil formally \'etale maps are formally \'etale (\autoref{lemma-standard}~\autoref{lem:lemma-standard.1}), we then recover:

\begin{corollary}
    \label{cor:relative-Frobenius-iso-implies-formally-etale}
    Let $\varphi \colon R \to S$ be a homomorphism of rings of prime characteristic $p > 0$. If $F_\varphi$ is an isomorphism, then $\varphi$ is formally \'etale.
\end{corollary}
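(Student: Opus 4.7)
The plan is to obtain this as an essentially formal consequence of \autoref{thm:relative-Frobenius-iso-b-nil formally-etale} combined with \autoref{lemma-standard}~\autoref{lem:lemma-standard.1}. Concretely, the implication \autoref{thm:relative-Frobenius-iso-b-nil formally-etale.c}$\implies$\autoref{thm:relative-Frobenius-iso-b-nil formally-etale.a} of \autoref{thm:relative-Frobenius-iso-b-nil formally-etale} already shows that if $F_\varphi$ is an isomorphism then $\varphi$ is b-nil formally \'etale, and the first part of \autoref{lemma-standard} then forces $\varphi$ to be formally \'etale. So the entire content of the corollary is already packaged in the main theorem, and the proof reduces to invoking these two results in sequence.

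If one wishes to avoid the chain of references, a direct and equally short argument is available: formal \'etaleness requires the lifting property against square-zero ideals $I \subset A$, i.e., ideals with $I^2 = 0$. Any such $I$ automatically satisfies $I^{[p]} = 0$ when $p \geq 2$: given $f \in I$, the factorization $f^p = f \cdot f^{p-1}$ puts $f^p$ in $I \cdot I = I^2 = 0$. Hence every commutative square defining the formally \'etale condition is a special case of a commutative square defining the b-nil formally \'etale condition, and the unique lift produced by the b-nil version (supplied by \autoref{thm:relative-Frobenius-iso-b-nil formally-etale}) is exactly the lift required for formal \'etaleness.

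There is no genuine obstacle here; the substantive work has been carried out in the proof of \autoref{thm:relative-Frobenius-iso-b-nil formally-etale}, specifically in the use of \autoref{lem:lifting-lemma-Frobenius} to manufacture a lift from the surjectivity of $F_\varphi$ and to deduce uniqueness from the fact that differences of two lifts land in $I$ and are therefore killed upon raising to the $p$-th power. The corollary is simply the classical shadow of that mechanism under the coarser hypothesis $I^2 = 0$.
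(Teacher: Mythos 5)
Your proposal is correct and matches the paper's own proof: the paper likewise deduces the corollary by combining the implication ``$F_\varphi$ an isomorphism $\implies$ $\varphi$ b-nil formally \'etale'' from \autoref{thm:relative-Frobenius-iso-b-nil formally-etale} with \autoref{lemma-standard}~\autoref{lem:lemma-standard.1}. Your direct unwinding via $I^2 = 0 \implies I^{[p]} = 0$ is just the content of that lemma in this special case, so there is nothing genuinely different in the route.
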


\begin{corollary}
    \label{cor:recovering-classical-formally-etale-field-extensions}
    Let $L/K$ be an extension of fields. Then:
    \begin{enumerate}
        \item $L/K$ is formally \'etale if and only if $L/K$ is b-nil formally \'etale.\label{cor:recovering-classical-formally-etale-field-extensions.a}
        \item \label{cor:recovering-classical-formally-etale-field-extensions.b} The following are equivalent:
        \begin{enumerate}[(i)]
            \item $L/K$ is b-nil formally smooth.\label{cor:recovering-classical-formally-etale-field-extensions.b.i}
            \item $L/K$ is formally smooth.\label{cor:recovering-classical-formally-etale-field-extensions.b.ii}
            \item $L$ is geometrically reduced over $K$ (i.e. it is a separable $K$-algebra).\label{cor:recovering-classical-formally-etale-field-extensions.b.iii}
        \end{enumerate}
    \end{enumerate}
\end{corollary}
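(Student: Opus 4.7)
The plan is to deduce this corollary directly from the main structural results \autoref{thm:relative-Frobenius-iso-b-nil formally-etale} and \autoref{cor:p-basis-implies-bnil-formally-smooth} established earlier in this subsection, combined with the classical characterizations of formally \'etale and formally smooth field extensions (\autoref{thm:formally-etale-fields} and the Mac Lane--Matsumura theorem recorded in \cite[Theorem~26.9]{MatsumuraCommutativeRingTheory}). I focus on the prime characteristic case, which is the main substance; in characteristic zero, part \autoref{cor:recovering-classical-formally-etale-field-extensions.a} follows from the fact that a formally \'etale extension is separable algebraic and hence a filtered colimit of finite separable subextensions, each of which is \'etale and so b-nil formally \'etale by \autoref{lemma-standard}~\autoref{lem:lemma-standard.13}, with the colimit being b-nil formally \'etale by \autoref{lemma-standard}~\autoref{lem:lemma-standard.16}.

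For \autoref{cor:recovering-classical-formally-etale-field-extensions.a} in prime characteristic, one direction is \autoref{lemma-standard}~\autoref{lem:lemma-standard.1}. For the converse, I would invoke \autoref{thm:formally-etale-fields} to identify formal \'etaleness of $L/K$ with the canonical map $K \otimes_{K^p} L^p \to L$ being an isomorphism. Since this canonical map is precisely the relative Frobenius $F_\varphi$ of $\varphi \colon K \to L$, \autoref{thm:relative-Frobenius-iso-b-nil formally-etale} then yields b-nil formal \'etaleness of $\varphi$.

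For \autoref{cor:recovering-classical-formally-etale-field-extensions.b}, the implication \autoref{cor:recovering-classical-formally-etale-field-extensions.b.i}$\Rightarrow$\autoref{cor:recovering-classical-formally-etale-field-extensions.b.ii} is \autoref{lemma-standard}~\autoref{lem:lemma-standard.1}, and the equivalence \autoref{cor:recovering-classical-formally-etale-field-extensions.b.ii}$\Leftrightarrow$\autoref{cor:recovering-classical-formally-etale-field-extensions.b.iii} is the classical Mac Lane--Matsumura theorem \cite[Theorem~26.9]{MatsumuraCommutativeRingTheory}. The main new content is \autoref{cor:recovering-classical-formally-etale-field-extensions.b.iii}$\Rightarrow$\autoref{cor:recovering-classical-formally-etale-field-extensions.b.i}; my plan here is to apply \autoref{cor:p-basis-implies-bnil-formally-smooth}, whose two hypotheses I verify as follows. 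Geometric reducedness of $L/K$ is equivalent to the linear disjointness of $K$ and $L^p$ over $K^p$, which translates exactly to injectivity of $F_\varphi \colon K \otimes_{K^p} L^p \to L$. Moreover, a separable (equivalently, geometrically reduced) field extension always admits a $p$-basis by classical results of Matsumura \cite[Theorem~26.5]{MatsumuraCommutativeRingTheory}. With both hypotheses in place, \autoref{cor:p-basis-implies-bnil-formally-smooth} produces b-nil formal smoothness.

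There is no substantive technical obstacle: once the dictionary between the classical language (separability, linear disjointness, $p$-bases) and the relative Frobenius formalism developed in this paper is set up, the corollary is essentially a bookkeeping exercise combining \autoref{thm:relative-Frobenius-iso-b-nil formally-etale} and \autoref{cor:p-basis-implies-bnil-formally-smooth} with the classical field-theoretic characterizations.
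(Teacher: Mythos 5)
In prime characteristic your argument is correct, and it diverges from the paper's proof precisely at the key implication \autoref{cor:recovering-classical-formally-etale-field-extensions.b.iii}$\Rightarrow$\autoref{cor:recovering-classical-formally-etale-field-extensions.b.i}. The paper picks a $p$-basis $B$ of the separable extension $L/K$, quotes \cite[Theorem~26.8]{MatsumuraCommutativeRingTheory} to see that $B$ is algebraically independent over $K$ with $L/K(B)$ formally \'etale, and then composes the b-nil formally smooth extension $K \to K(B)$ with the b-nil formally \'etale extension $K(B) \to L$ (the latter via part \autoref{cor:recovering-classical-formally-etale-field-extensions.a}). You instead verify the two hypotheses of \autoref{cor:p-basis-implies-bnil-formally-smooth} directly: injectivity of $F_\varphi$ is exactly MacLane's linear-disjointness criterion for separability, and a relative $p$-basis of $L$ over $K$ exists because $L$ is purely inseparable of exponent $\leq 1$ over the field $K[L^p]$ (note this existence needs no separability at all, so your appeal is safe; just double-check the Matsumura numbering, since Theorem~26.5 is the comparison with $\Omega_{L/K}$). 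This is a legitimate and arguably more direct route, and it isolates where separability actually enters, namely in the injectivity of the relative Frobenius. Part \autoref{cor:recovering-classical-formally-etale-field-extensions.a} and the implications (i)$\Rightarrow$(ii), (ii)$\Leftrightarrow$(iii) are handled exactly as in the paper.

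There is, however, a gap: the corollary is stated for arbitrary field extensions, and you treat part \autoref{cor:recovering-classical-formally-etale-field-extensions.b} only in characteristic $p$. In characteristic $0$ conditions \autoref{cor:recovering-classical-formally-etale-field-extensions.b.ii} and \autoref{cor:recovering-classical-formally-etale-field-extensions.b.iii} hold for every extension, so the asserted equivalence forces you to prove that every extension of characteristic-zero fields is b-nil formally smooth; this is not automatic from the definition and is not covered by your relative-Frobenius/MacLane/$p$-basis machinery. The paper supplies the missing argument: such an extension is separably generated, hence the composition of a purely transcendental extension, which is b-nil formally smooth as a localization of a polynomial extension (\autoref{lemma-standard}~\autoref{lem:lemma-standard.14}, \autoref{lem:lemma-standard.15}), with a separable algebraic extension, which is b-nil formally \'etale by the same ind-\'etale argument you already use for part \autoref{cor:recovering-classical-formally-etale-field-extensions.a}; one concludes by closure under composition (\autoref{lemma-standard}~\autoref{lem:lemma-standard.2}). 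Adding this paragraph completes your proof.
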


\begin{proof}
When $\Char(K) = 0$, $L/K$ being formally \'etale is equivalent to $L/K$ being separable algebraic, hence ind-\'etale, hence ind-b-nil formally \'etale. But a filtered colimit of b-nil formally \'etale algebras is b-nil formally \'etale. In addition, any extension of fields of characteristic $0$ is separably generated, and hence, it is the composition of a purely transcendental extension which is b-nil formally smooth by \autoref{lemma-standard} (since it is a localization of a polynomial extension) and a separable algebraic extension which is b-nil formally \'etale by the previous discussion. Thus, any extension of characteristic $0$ fields is b-nil formally smooth since this property is preserved under composition. 

From now on we assume $\Char(K) = p > 0$. The formally \'etale property for field extensions is equivalent to the relative Frobenius being an isomorphism by \autoref{thm:formally-etale-fields}. Thus, \autoref{cor:recovering-classical-formally-etale-field-extensions.a} follows by \autoref{thm:relative-Frobenius-iso-b-nil formally-etale}.

    For \autoref{cor:recovering-classical-formally-etale-field-extensions.b}, clearly \autoref{cor:recovering-classical-formally-etale-field-extensions.b.i} $\implies$ \autoref{cor:recovering-classical-formally-etale-field-extensions.b.ii}. In addition, the equivalence of \autoref{cor:recovering-classical-formally-etale-field-extensions.b.ii} and \autoref{cor:recovering-classical-formally-etale-field-extensions.b.iii} is classical; see \cite[Theorem~26.9]{MatsumuraCommutativeRingTheory}. It remains to show \autoref{cor:recovering-classical-formally-etale-field-extensions.b.iii} $\implies$ \autoref{cor:recovering-classical-formally-etale-field-extensions.b.i}. Let $L$ be a geometrically reduced field extension of $K$ and let $B \subset L$ be a $p$-basis over $K$. Then $B$ is algebraically independent over $K$ and $L/K(B)$ is formally \'etale by \cite[Theorem~26.8]{MatsumuraCommutativeRingTheory}. Since purely transcendental extensions are b-nil formally smooth and formally \'etale field extensions are b-nil formally \'etale, we then get $L/K$ is b-nil formally smooth by composition.
\end{proof}

Another interesting consequence is for b-nil formally smooth maps to semi-perfect rings.

\begin{corollary}
    \label{cor:b-nil-smooth-to-semi-perfect}
    Let $\varphi \colon R \to S$ be a b-nil formally smooth map of $\mathbf{F}_p$-algebras where $S$ is semi-perfect. Then $\varphi$ is b-nil formally \'etale.
\end{corollary}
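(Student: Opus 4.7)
The plan is to reduce to showing that the relative Frobenius $F_\varphi \colon S^{(p)} \to S$ is an isomorphism, and then invoke \autoref{thm:relative-Frobenius-iso-b-nil formally-etale} to conclude b-nil formal \'etaleness. This is a short proof with no serious obstacle; the point is that semi-perfectness of $S$ is exactly what is missing from b-nil formal smoothness to upgrade it to b-nil formal \'etaleness.

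First, since $\varphi$ is b-nil formally smooth, \autoref{prop:b-nil-formally-smooth-injective-relative-Frobenius-projective} gives that $F_\varphi$ is injective (in fact, it exhibits $S$ as a projective, hence faithfully flat, $S^{(p)}$-module). So it remains to verify surjectivity of $F_\varphi$. For this I would use the defining commutative diagram of the relative Frobenius, which yields the factorization
\[
F_S \;=\; F_\varphi \circ (F_R \otimes_R \id_S) \colon S \longrightarrow S^{(p)} \longrightarrow S.
\]
Since $S$ is semi-perfect, $F_S$ is surjective, and therefore the second map $F_\varphi$ in this factorization must also be surjective.

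Combining these two steps, $F_\varphi$ is a bijective ring map, hence an isomorphism. Applying the implication \autoref{thm:relative-Frobenius-iso-b-nil formally-etale.c}$\Rightarrow$\autoref{thm:relative-Frobenius-iso-b-nil formally-etale.a} of \autoref{thm:relative-Frobenius-iso-b-nil formally-etale} finishes the proof. The only mildly subtle input is the injectivity of $F_\varphi$ for b-nil formally smooth maps, which is the content of \autoref{prop:b-nil-formally-smooth-injective-relative-Frobenius-projective}; everything else is a direct unwinding of definitions.
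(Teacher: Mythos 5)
Your proposal is correct and is essentially the paper's own argument: injectivity of $F_\varphi$ from \autoref{prop:b-nil-formally-smooth-injective-relative-Frobenius-projective}, surjectivity from semi-perfectness of $S$ via the factorization $F_S = F_\varphi \circ (F_R \otimes_R \id_S)$, and then \autoref{thm:relative-Frobenius-iso-b-nil formally-etale} to conclude.
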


\begin{proof}
    Since $S$ is semi-perfect, the relative Frobenius $F_\varphi$ is surjective, and since $\varphi$ is b-nil formally smooth, $F_\varphi$ is injective by \autoref{prop:b-nil-formally-smooth-injective-relative-Frobenius-projective}.
\end{proof}

\begin{corollary}
    \label{cor:maps-perfect-algebras-formally-etale}
    If $\varphi \colon R \to S$ is a map of perfect $\mathbf{F}_p$-algebras, then $\varphi$ is b-nil formally \'etale.
\end{corollary}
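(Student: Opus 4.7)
The plan is to reduce the claim to showing that the relative Frobenius $F_\varphi \colon S^{(p)} \to S$ is an isomorphism, and then invoke the equivalence \autoref{thm:relative-Frobenius-iso-b-nil formally-etale.c}$\iff$\autoref{thm:relative-Frobenius-iso-b-nil formally-etale.a} of \autoref{thm:relative-Frobenius-iso-b-nil formally-etale} to conclude that $\varphi$ is b-nil formally \'etale. This avoids having to verify the lifting property by hand.

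To check that $F_\varphi$ is an isomorphism, I would unwind the commutative diagram defining the relative Frobenius. Reading off the bottom triangle gives the identity
\[
F_\varphi \circ (F_R \otimes_R \id_S) \;=\; F_S
\]
of ring maps $S \to F_*S$. Since $R$ is perfect, $F_R \colon R \to F_*R$ is an isomorphism of rings, and tensoring this isomorphism with $S$ over $R$ produces an isomorphism $F_R \otimes_R \id_S \colon S \to F_*R \otimes_R S = S^{(p)}$. Since $S$ is perfect, $F_S$ is an isomorphism as well. Hence
\[
F_\varphi \;=\; F_S \circ (F_R \otimes_R \id_S)^{-1}
\]
is a composition of isomorphisms, so it is itself an isomorphism, as needed.

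There is no serious obstacle to this argument; the content has already been absorbed into \autoref{thm:relative-Frobenius-iso-b-nil formally-etale}. The only small point to verify is that the base change $F_R \otimes_R \id_S$ is an isomorphism, which is immediate from the fact that tensoring an isomorphism of $R$-algebras with any $R$-algebra yields an isomorphism.
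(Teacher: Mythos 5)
Your proposal is correct and is essentially the paper's argument: the paper's one-line proof also reduces to the fact that the relative Frobenius of a map of perfect $\mathbf{F}_p$-algebras is an isomorphism and then applies \autoref{thm:relative-Frobenius-iso-b-nil formally-etale}; you have simply spelled out the factorization $F_\varphi = F_S \circ (F_R \otimes_R \id_S)^{-1}$ that makes this evident.
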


\begin{proof}
    Indeed, the relative Frobenius of a map of perfect $\mathbf{F}_p$-algebras is  an isomorphism.
\end{proof}



\begin{examples}
    \label{eg:formally-etale-not-b-nil formally-etale}
    We give some examples of formally \'etale ring maps that are not b-nil formally \'etale and also an example of a b-nil formally \'etale map with non-trivial cotangent complex. We fix a prime $p > 0$ and only consider algebras over $\mathbf{F}_p$ in this example.
    \begin{enumerate}
        \item The first one is \cite{Bhatt_imperfect_trivial_cc}. Following a suggestion by Gabber, Bhatt constructs  a non-reduced $\mathbf{F}_p$-algebra $R$ such that the cotangent complex $L_{R/\mathbf{F}_p} \simeq 0$, and hence, $\mathbf{F}_p \to R$ is formally \'etale. However, $\mathbf{F}_p \to R$ cannot be b-nil formally \'etale since $R$ is not reduced (\autoref{cor-geometricallyreduced}). \label{eg:formally-etale-not-b-nil formally-etale.a}

        \item The next example is inspired by \cite[\href{https://stacks.math.columbia.edu/tag/060H}{060H}]{stacks-project}, which corrects \cite[0, 19.10.3(a)]{EGAIV}.  
        Suppose $I$ is an ideal of a ring $R$ such that $I = I^2$. Then \cite[\href{https://stacks.math.columbia.edu/tag/060H}{060H}]{stacks-project} shows that $R \to R/I$ is always formally \'etale. For example, if $R$ is a non-Noetherian valuation domain of rank $1$ and $I$ is the maximal ideal of $R$, then $I = I^2$, and so, $R \twoheadrightarrow R/I$ is formally \'etale but not flat. Now suppose $R$ is a ring of prime characteristic $p > 0$. Then the relative Frobenius of $R \twoheadrightarrow R/I$ can be identified with the surjection $R/I^{[p]} \twoheadrightarrow R/I$ with kernel $I/I^{[p]}$. Then $R \twoheadrightarrow R/I$ will not be b-nil formally \'etale by \autoref{thm:relative-Frobenius-iso-b-nil formally-etale} if $I \neq I^{[p]}$. In other words, we need to construct a ring $R$ of prime characteristic $p > 0$ and an idempotent ideal $I$ of $R$ such that $I^{[p]} \neq I$. Note that such an $I$ must necessarily be non-finitely generated. Let $\{x_n \colon n \in \mathbf{N}\}$ be indeterminates and consider the ring
     \[R \coloneqq \mathbf{F}_p[x_n\colon n \in \mathbf{N}]/(x_n^p, x_n - x_{n+1}x_{n+2} \colon n \in \mathbf{N}).\]
    Let $\overline{x_n}$ be the image of $x_n$ in $R$ and consider the ideal $I = (\overline{x_n} \colon n \in \mathbf{N})$ of $R$. Since $\overline{x_n} = \overline{x_{n+1}}\cdot\overline{x_{n+2}} \in I^2$, it follows that $I = I^2$. However, $I^{[p]} = 0$, so $I^{[p]} \neq I$.
    We can also make an example with $R$ semi-perfect and hence $F$-finite by taking
    \[R \coloneqq \mathbf{F}_p[x_n^{1/p^\infty} \colon n \in \mathbf{N}]/(x_n^p, x_n - x_{n+1}x_{n+2} \colon n \in \mathbf{N}).\]
    and $I = (\overline{x_n} \colon n \in \mathbf{N})$. \label{eg:formally-etale-not-b-nil formally-etale.b}

    \item The following example was suggested to us by Bhatt. Let $I$ be an ideal of a ring $R$ such that $I^{[p]} = I$. Then $R \twoheadrightarrow R/I$ is b-nil formally \'etale. Indeed, the relative Frobenius $R/I^{[p]} \twoheadrightarrow R/I$ of $R \to R/I$ is then just the identity. One can now construct a b-nil formally \'etale ring map with non-trivial cotangent complex. Let $(V,\m,\kappa)$ be a perfect valuation ring of characteristic $p > 0$ of rank $1$. Let $t \in \m$ be a non-zero element, that is, $t$ is a pseudo-uniformizer. Then $\m = (t^{1/p^\infty})$, and so, $\m^{[p]} = \m$. In particular, $\m^2 = \m$ as well. This shows that $V_0 \coloneqq V/(t) \twoheadrightarrow \kappa$ is b-nil formally \'etale by the above discussion. We claim $L_{\kappa/V_0}$ is not trivial. Our first claim is that $L_{\kappa/V} \simeq 0$. If $V_n \coloneqq V/(t^{1/p^n})$, then $\kappa = \colim_{n \in \mathbf{N}} V/(t^{1/p^n})$, and so, $L_{\kappa/V} = \colim_{n \in \mathbf{N}} L_{V_n/V}$ by \cite[\href{https://stacks.math.columbia.edu/tag/08S9}{Tag 08S9}]{stacks-project}. Since $t^{1/p^n}$ is a regular element of $V$, $L_{V_n/V} \simeq (t^{1/p^n})/(t^{2/p^n})[1]$ by \cite[\href{https://stacks.math.columbia.edu/tag/08SJ}{Tag 08SJ}]{stacks-project}. Thus, $H^{n}(L_{\kappa/V}) = 0$ for all $n \leq -2$. But $H^0(L_{\kappa/V}) = \Omega_{\kappa/V} = 0$ (since $V \twoheadrightarrow \kappa$) and $H^{-1}(L_{\kappa/V}) = \m/\m^2 = 0$ by \cite[\href{https://stacks.math.columbia.edu/tag/08RA}{Tag 08RA}]{stacks-project}, proving our claim. Now using the canonical distinguished triangle (\cite[\href{https://stacks.math.columbia.edu/tag/08QX}{Tag 08QX}]{stacks-project})
    \[L_{V_0/V} \otimes_{V_0}^L \kappa \to L_{\kappa/V} \to L_{\kappa/V_0} \to L_{V_0/V} \otimes_{V_0}^L \kappa[1]\]
    and the fact that $L_{V_0/V} = (t)/(t^2)[1]$ (\cite[\href{https://stacks.math.columbia.edu/tag/08SJ}{Tag 08SJ}]{stacks-project}), we see that $L_{\kappa/V_0} \not\simeq 0$. \label{eg:formally-etale-not-b-nil formally-etale.c}
    \end{enumerate}
\end{examples}

\subsection{Connections with enhancements of flat absolute Frobenius}
\label{sec:b-nil-formally-smooth-flat-Frobenius}
We end this section with a discussion of how the b-nil formal smoothness property interacts with certain enhancements of $\mathbf{F}_p$-algebras with flat absolute Frobenius. These notions were studied for modules over arbitrary rings by Ohm-Rush \cite{OhmRu-content} and Raynaud-Gruson \cite{rg71}, and were perhaps first utilized in the study of singularities in prime characteristic commutative algebra by Hochster-Huneke \cite{HochsterHunekeFRegularityTestElementsBaseChange} (see also \cite{KatzmanParameterTestIdealOfCMRings, KatzmanLyubeznikZhangOnDiscretenessAndRationality, SharpAnExcellentFPureRing, SharpBigTestElements, HochsterJeffriesintflatness, DattaEpsteinTuckerMLmodules, DESTate}). We state the main result, which generalizes \cite[Proposition~3.2.7]{DESTate}.

\begin{proposition}
    \label{prop:FORT-FIF-ascent-b-nil-formal-smoothness}
    Let $R \to S$ be a map of $\mathbf{F}_p$-algebras that is b-nil formally smooth.
    \begin{enumerate}
        \item If $R$ is Frobenius Ohm-Rush trace, then $S$ is Frobenius Ohm-Rush trace.
        \item If $R$ is $F$-intersection flat, then $S$ is $F$-intersection flat.
    \end{enumerate}
\end{proposition}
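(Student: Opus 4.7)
The plan is to leverage \autoref{prop:b-nil-formally-smooth-injective-relative-Frobenius-projective} to supply the splitting of the iterated relative Frobenius that is the engine of the ascent argument in \cite[Proposition~3.2.7]{DESTate}. First I would upgrade the structural proposition from $e=1$ to arbitrary $e \geq 1$: using stability of b-nil formal smoothness under base change (\autoref{lemma-standard}\autoref{lem:lemma-standard.3}), the base change $F^{e-1}_*R \to S^{(p^{e-1})}$ of $\varphi$ along $F^{e-1}_R$ is itself b-nil formally smooth, so its relative Frobenius $S^{(p^e)} \to S^{(p^{e-1})}$ is faithfully flat with an $S^{(p^e)}$-linear left inverse by \autoref{prop:b-nil-formally-smooth-injective-relative-Frobenius-projective}. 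Composing these, $F^e_\varphi \colon S^{(p^e)} \to S$ is faithfully flat, $S$ is a projective $S^{(p^e)}$-module via $F^e_\varphi$, and $F^e_\varphi$ admits an $S^{(p^e)}$-linear retraction $g_e \colon S \to S^{(p^e)}$.

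Next, $g_e$ gives a decomposition $S \cong S^{(p^e)} \oplus \ker(g_e)$ of $S^{(p^e)}$-modules. Restricting scalars along $\iota_e \colon S \to S^{(p^e)}$ (the base change of $F^e_R$ along $\varphi$, so that $F^e_S = F^e_\varphi \circ \iota_e$) turns this into a decomposition of $S$-modules $F^e_*S \cong S^{(p^e)} \oplus \ker(g_e)$. The summand $S^{(p^e)}$ carries the canonical $S$-module structure $F^e_*R \otimes_R S$ of a base change; under the Frobenius Ohm-Rush trace (resp.\ $F$-intersection flat) hypothesis, $F^e_*R$ is an Ohm-Rush trace (resp.\ intersection flat) $R$-module, and by the base change behavior of these properties established in \cite{DESTate}, $S^{(p^e)}$ is Ohm-Rush trace (resp.\ intersection flat) over $S$. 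Since these properties are inherited by arbitrary direct sums and by direct summands, and $\ker(g_e)$ is an $S^{(p^e)}$-module direct summand of a free $S^{(p^e)}$-module (by projectivity of $S$ over $S^{(p^e)}$), the summand $\ker(g_e)$ is Ohm-Rush trace (resp.\ intersection flat) over $S$ as well. Assembling the two summands, $F^e_*S$ is Ohm-Rush trace (resp.\ intersection flat) over $S$, and since this holds for every $e$, $S$ is Frobenius Ohm-Rush trace (resp.\ $F$-intersection flat).

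The main obstacle I anticipate is verifying the precise base change behavior of the Ohm-Rush trace property in the generality required here. The analogous base-change results in \cite{DESTate} typically require $\varphi \colon R \to S$ to be flat; this flatness should follow by combining the faithful flatness of $F_\varphi$ supplied by \autoref{prop:b-nil-formally-smooth-injective-relative-Frobenius-projective} with the flatness of $F_R$ that is already implicit in the hypothesis on $R$, but the argument must be laid out carefully in the non-Noetherian setting. A secondary technical point is confirming that the various direct-sum and direct-summand closure properties for Ohm-Rush trace and intersection flat modules interact cleanly with the restriction of scalars along $\iota_e$, which is precisely where the identification of $S^{(p^e)}$ with the base change $F^e_*R \otimes_R S$ plays its essential role.
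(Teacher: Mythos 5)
Your proposal is correct and its skeleton is the same as the paper's: both arguments combine the base change stability of the Ohm-Rush trace and intersection flat properties (so that $S^{(p)} = F_*R \otimes_R S$ is Ohm-Rush trace, resp.\ intersection flat, over $S$) with the projectivity of $F_*S$ over $S^{(p)}$ supplied by \autoref{prop:b-nil-formally-smooth-injective-relative-Frobenius-projective}. The only divergence is the final assembly: the paper invokes stability of these properties under \emph{composition} of ring maps, applied to $S \to S^{(p)} \to F_*S$ (the second map being Ohm-Rush trace because projective modules are), whereas you unwind the projectivity into the explicit splitting $F_*S \cong S^{(p)} \oplus \ker(g)$ and appeal to closure of the two properties under arbitrary direct sums and direct summands; this is essentially an inlined proof of the composition fact in the special case where the top module is projective, and the closure facts you need do hold (for instance via the characterizations of Ohm-Rush trace as flat plus strictly Mittag-Leffler and of intersection flat as flat plus Mittag-Leffler, all of which pass to direct sums and summands). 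Two further remarks: the iteration over all $e \geq 1$ is unnecessary, since Frobenius Ohm-Rush trace and $F$-intersection flatness are conditions on $F_*S$ alone, i.e.\ the case $e = 1$ suffices (and your claim that $F^e_*R$ is Ohm-Rush trace over $R$ for $e \geq 2$ would itself require the composition lemma); and the obstacle you anticipate about flatness of $\varphi$ does not arise, because the base change results the paper uses (\cite[Proposition~4.1.9, Theorem~4.3.1]{DattaEpsteinTuckerMLmodules}) hold for arbitrary, not necessarily flat, base change.
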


\begin{remark}
    A ring $R$ of characteristic $p$ is \emph{Frobenius Ohm-Rush trace} (resp. \emph{$F$-intersection flat}) if $F_*R$ is an Ohm-Rush trace (resp. $F$-intersection flat) $R$-module. The definitions of Ohm-Rush trace and intersection flat modules may be found in \cite{DattaEpsteinTuckerMLmodules, HochsterJeffriesintflatness}. Note that by \cite[Part II, Proposition~2.3.4]{rg71}, $R$ is Frobenius Ohm-Rush trace if and only if $F_*R$ is a flat and strictly Mittag-Leffler $R$-module. Also, $R$ is $F$-intersection flat if and only if $F_*R$ is a flat and Mittag-Leffler $R$-module \cite[Theorem~4.3.1]{DattaEpsteinTuckerMLmodules}. Since strictly Mittag-Leffler modules are Mittag-Leffler, (Frobenius) Ohm-Rush trace implies ($F$)-intersection flat. A projective module over an arbitrary ring is Ohm-Rush trace \cite{OhmRu-content} (see also \cite[Lemma~4.1.5]{DattaEpsteinTuckerMLmodules}), and hence is intersection flat. The proof of this last fact is a minor generalization of the proof of \autoref{lem:projective-implies-split}.
\end{remark}

\begin{proof}[Proof of \autoref{prop:FORT-FIF-ascent-b-nil-formal-smoothness}]
    Both the Ohm-Rush trace and intersection flat properties for ring maps are preserved under arbitrary base change; see \cite[Proposition~4.1.9]{DattaEpsteinTuckerMLmodules} for the former and \cite[Theorem~4.3.1]{DattaEpsteinTuckerMLmodules} for the latter. Thus, if $R$ is Frobenius Ohm-Rush trace (resp. $F$-intersection flat), then $S^{(p)}$ is an Ohm-Rush trace (resp. intersection flat) $S$-module by base change. By \autoref{prop:b-nil-formally-smooth-injective-relative-Frobenius-projective} and the fact that projective modules are Ohm-Rush trace, $F_*S$ is an Ohm-Rush trace thus an intersection flat $S^{(p)}$-module. Since the Ohm-Rush trace and intersection flat properties are preserved under compositions of rings maps (see \cite[Lemma~4.1.7]{DattaEpsteinTuckerMLmodules} and \cite[Proposition~5.7(a)]{HochsterJeffriesintflatness}), it follows that if $R$ is Frobenius Ohm-Rush trace (resp. $F$-intersection flat), then the absolute Frobenius of $S$, which is the composition $S \to S^{(p)} \to F_*S$, is Ohm-Rush trace (resp. intersection flat), i.e., $S$ is Frobenius Ohm-Rush trace (resp. $F$-intersection flat).
\end{proof}

\begin{corollary}
    \label{cor:b-nil-formal-smoothness-over-field}
    Let $R$ be a ring of prime characteristic $p > 0$ such that $F_*R$ is a projective $R$-module (for e.g., if $R$ is a localization of a polynomial ring over a field, or an $F$-finite regular ring). If $R \to S$ is a b-nil formally smooth ring map, then $S$ is Frobenius Ohm-Rush trace. In fact, $F_*S$ is a projective $S$-module.
\end{corollary}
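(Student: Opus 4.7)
The plan is to deduce this corollary directly from Proposition \ref{prop:b-nil-formally-smooth-injective-relative-Frobenius-projective} together with the base change and transitivity behavior of projectivity. First I would observe that since $F_*R$ is a projective $R$-module by hypothesis, the base change $S^{(p)} = F_*R \otimes_R S$ is a projective $S$-module in a natural way; here $S$ acts on $S^{(p)}$ via the map $\id_{F_*R} \otimes_R \varphi$ (or equivalently via the right tensor factor), which is exactly the $S$-algebra structure relevant for the decomposition $F_S = F_\varphi \circ (F_R \otimes_R \id_S)$ of the absolute Frobenius of $S$.

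Next, since $\varphi \colon R \to S$ is b-nil formally smooth, Proposition \ref{prop:b-nil-formally-smooth-injective-relative-Frobenius-projective} gives that $F_*S$ is a projective $S^{(p)}$-module via $F_\varphi$. Now projectivity is transitive under change of rings: if $M$ is a projective $B$-module and $B$ is a projective $A$-module, then $M$ is a projective $A$-module (for instance, realize each as a direct summand of a free module and compose). Applying this with $A = S$, $B = S^{(p)}$, and $M = F_*S$, the absolute Frobenius $F_S \colon S \to F_*S$ (which factors as $S \to S^{(p)} \xrightarrow{F_\varphi} F_*S$) makes $F_*S$ a projective $S$-module.

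Finally, by the general fact cited in the paper (Ohm--Rush \cite{OhmRu-content}, see also \cite[Lemma~4.1.5]{DattaEpsteinTuckerMLmodules}) that any projective module over an arbitrary ring is Ohm--Rush trace, we conclude that $F_*S$ is an Ohm--Rush trace $S$-module, i.e., $S$ is Frobenius Ohm--Rush trace. There is essentially no obstacle here: once one has Proposition \ref{prop:b-nil-formally-smooth-injective-relative-Frobenius-projective} in hand, the only thing to check is the (standard) transitivity of projectivity along the factorization of $F_S$, and the resulting Ohm--Rush trace property is then automatic from the general theory. The parenthetical examples ($R$ a localization of a polynomial ring over a field, or an $F$-finite regular ring) are covered because in each case $F_*R$ is a free, hence projective, $R$-module by Kunz's theorem combined with the standard $p$-basis description.
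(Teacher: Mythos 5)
Your argument is correct and is essentially the paper's own proof: projectivity of $F_*S$ over $S$ is obtained exactly as in the paper by base-changing the projective $R$-module $F_*R$ to get $S^{(p)}$ projective over $S$, invoking \autoref{prop:b-nil-formally-smooth-injective-relative-Frobenius-projective} for projectivity of $F_*S$ over $S^{(p)}$, and using transitivity of projectivity along the factorization $F_S = F_\varphi \circ (F_R \otimes_R \id_S)$; the Frobenius Ohm--Rush trace conclusion then follows since projective modules are Ohm--Rush trace (the paper instead first cites \autoref{prop:FORT-FIF-ascent-b-nil-formal-smoothness} for that part, but your more direct deduction is equally valid). The only slip is notational: the $S$-module structure on $S^{(p)}$ you need is the one given by $F_R \otimes_R \id_S$ (i.e., acting through the right tensor factor), not by $\id_{F_*R} \otimes_R \varphi$ (which is the structure map from $F_*R$), exactly as your own parenthetical and the displayed decomposition indicate.
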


\begin{proof}
    By hypothesis, $R$ is Frobenius Ohm-Rush trace. Thus, $S$ is Frobenius Ohm-Rush trace by \autoref{prop:FORT-FIF-ascent-b-nil-formal-smoothness}. In fact, $S^{(p)}$ is a projective $S$-module by base change. By hypothesis and \autoref{prop:b-nil-formally-smooth-injective-relative-Frobenius-projective}, $F_*S$ is a projective $S^{(p)}$-module. Since $S \to S^{(p)} \to F_*S$ is the absolute Frobenius of $S$, we get $F_*S$ is a projective $S$-module.
\end{proof}

\begin{example}
    \label{eg:regular-not-bil-nil-smooth}
    In the previous results, the key point is that for a b-nil formally smooth map $R \to S$ of $\mathbf{F}_p$-algebras, the relative Frobenius $F_{S/R}$ makes $F_*S$ a projective $S^{(p)}$-module. We now give an example to show that this projectivity property can fail easily even when $F_{S/R}$ is faithfully flat. Let $k$ be a field of characteristic $p > 0$ such that $[k^{1/p}\colon k] = \infty$. Let $t$ be an indeterminate and consider the local ring $R \coloneqq k[t]_{(t)}$. Let $\widehat{R}$ denote the $t$-adic completion of $R$. Then $R \to \widehat{R}$ is a regular map of Noetherian rings, hence $F_{\widehat{R}/R}$ is faithfully flat (\autoref{thm:Radu-Andre}~\autoref{thm:Radu-Andre.a}). Also, $R \to \widehat{R}$ is formally smooth in the $t\widehat{R}$-adic topology \cite[\href{https://stacks.math.columbia.edu/tag/07NQ}{Tag 07NQ}]{stacks-project}. However, $F_*\widehat{R}$ is \emph{not} a projective $\widehat{R}^{(p)}$-module. Consequently, $R \to \widehat{R}$ is \emph{not} b-nil formally smooth by \autoref{prop:b-nil-formally-smooth-injective-relative-Frobenius-projective}. Indeed, since $F_*R$ is a free $R$-module, by base change $\widehat{R}^{(p)}$ is $\widehat R$-free. Thus, if $F_*\widehat{R}$ is a projective, and hence a free, $\widehat{R}^{(p)}$-module (note $\widehat{R}^{(p)}$ is local), then $F_*\widehat{R}$ would be a free $\widehat{R}$-module. The rank of $F_*\widehat{R}$ over $\widehat{R}$ must be infinite because $k$, being a homomorphic image of $\widehat{R}$, would be $F$-finite otherwise. But $F_*\widehat{R}$ is a $t\widehat{R}$-adically complete $\widehat{R}$-module, and a non-finitely generated free $\widehat{R}$-module \emph{cannot} be $t\widehat{R}$-adically complete by the next lemma. A similar argument shows that $k \to \widehat{R}$ is not b-nil formally smooth.
\end{example}

\begin{lemma}
    \label{lem:free-modules-infinite-rank-complete-local}
    Let $(A,\m,\kappa)$ be a Noetherian complete local ring of Krull dimension $\geq 1$. Then a free $A$-module of infinite rank is never $\m$-adically complete.
\end{lemma}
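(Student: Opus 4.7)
The plan is to exhibit explicitly a Cauchy sequence in a free $A$-module of infinite rank which does not converge, by exploiting the fact that $\m^n$ is nonzero for every $n$. Let $F = A^{(I)}$ with $I$ infinite, and denote the standard basis by $\{e_i\}_{i \in I}$. Since $A$ is Noetherian local with $\dim A \geq 1$, no power $\m^n$ can vanish: if $\m^n = 0$ then $A$ would be Artinian and hence zero-dimensional, contradicting $\dim A \geq 1$. So for every $n \geq 1$ I can choose a nonzero element $a_n \in \m^n$, and I can fix distinct elements $i_1, i_2, \ldots \in I$ (using that $I$ is infinite).

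Next, consider the partial sums $s_N \coloneqq \sum_{n=1}^{N} a_n e_{i_n} \in F$. For $N > M$, the difference $s_N - s_M$ has all its coordinates in $\m^{M+1}$, so $s_N - s_M \in \m^{M+1}F$, and $(s_N)$ is Cauchy in the $\m$-adic topology on $F$. I will show that this sequence has no limit in $F$. Suppose, for contradiction, that $s_N \to s$ in $F$, and write $s = \sum_{i \in J} s_i e_i$ for a finite set $J \subset I$. The coordinate projection $\pi_j \colon F \to A$ is continuous because $\pi_j(\m^k F) \subset \m^k$, so $\pi_{i_n}(s - s_N) \in \m^k$ for every $k$ and every sufficiently large $N$. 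For $N \geq n$ we have $\pi_{i_n}(s_N) = a_n$, hence $\pi_{i_n}(s) - a_n \in \bigcap_k \m^k = 0$ by Krull's intersection theorem, so $\pi_{i_n}(s) = a_n \neq 0$ for every $n$. But $\pi_{i_n}(s) = 0$ whenever $i_n \notin J$, and all but finitely many $i_n$ are outside the finite set $J$. This contradiction shows $F$ is not $\m$-adically complete.

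No serious obstacle is expected here: the only subtle points are making sure $\m^n$ is genuinely nonzero (handled via the Artinian dichotomy above) and applying Krull intersection, which is available because $A$ is Noetherian local. The continuity of each coordinate projection with respect to the $\m$-adic topology is immediate from $\m^k F = (\m^k)^{(I)}$.
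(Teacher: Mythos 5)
Your proof is correct and follows essentially the same strategy as the paper: a Cauchy sequence of partial sums whose coordinates lie in increasingly high powers of $\m$, placed in distinct basis slots, so that any limit would need infinitely many nonzero coordinates, contradicting finite support. The only cosmetic differences are that the paper first reduces to countable rank via a split exact sequence and the snake lemma and picks $x_n \in \m^n \setminus \m^{n+1}$ to rule out a limit directly, whereas you work with countably many distinct indices inside $A^{(I)}$ and instead combine coordinate projections with Krull's intersection theorem; both routes are sound.
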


\begin{proof}
    For an $A$-module $M$, we let $\widehat{M}$ denote the $\m$-adic completion of $M$. Let $I$ be an infinite set and $J \subset I$ be a countable infinite subset. Then we get a split exact sequence $0 \to A^{\oplus J} \to A^{\oplus I} \to A^{\oplus I\setminus J} \to 0$, so we get a commutative diagram of short exact sequences 
    \[\begin{tikzcd}[cramped]
	0 & {A^{\oplus J}} & {A^{\oplus I}} & {A^{\oplus I\setminus J}} & 0 \\
	0 & {\widehat{A^{\oplus J}}} & {\widehat{A^{\oplus I}}} & {\widehat{A^{\oplus I\setminus J}}} & 0
	\arrow[from=1-1, to=1-2]
	\arrow[from=1-2, to=1-3]
	\arrow[from=1-2, to=2-2]
	\arrow[from=1-3, to=1-4]
	\arrow[from=1-3, to=2-3]
	\arrow[from=1-4, to=1-5]
	\arrow[from=1-4, to=2-4]
	\arrow[from=2-1, to=2-2]
	\arrow[from=2-2, to=2-3]
	\arrow[from=2-3, to=2-4]
	\arrow[from=2-4, to=2-5].
\end{tikzcd}\]
Since free $A$-modules are $\m$-adically separated, if $A^{\oplus I} \to \widehat{A^{\oplus I}}$ is an isomorphism, then so is $A^{\oplus J} \to \widehat{A^{\oplus J}}$ by the snake lemma. Thus, replacing $I$ by $J$, it suffices to that $A^{\oplus {\mathbf N}}$ is not $\m$-adically complete. Since $\dim(A) \geq 1$, for all $n \in \mathbf{N}$ there exists  $x_n \in \m^n \setminus \m^{n+1}$. Now consider the sequence $(y_n)_n \subset A^{\mathbf{N}}$, where $y_n = (x_0,\dots,x_n,0,0,\dots,0,\dots)$. Then $(y_n)_n$ is Cauchy in the $\m$-adic topology on $A^{\oplus \mathbf{N}}$ but there is no $\alpha \in A^{\oplus \mathbf{N}}$ such that $(y_n-\alpha)_n$ is a null sequence in the $\m$-adic topology, showing that $A^{\oplus \mathbf{N}}$ cannot be $\m$-adically complete. Indeed, if $\alpha = (a_0,a_1,\dots)$, then choose an integer $M \gg 0$ such that for all $n \geq M$, $a_n = 0$. Since $x_{M} \in \m^{M}\setminus\m^{M+1}$, for all $n \geq M$, $y_n - \alpha \notin \m^{M+1}(A^{\oplus \mathbf{N}})$. 
\end{proof}


%
%

\autoref{eg:regular-not-bil-nil-smooth} illustrates that $F_{S/R}$ can fail to give $S$ the structure of a projective $S^{(p)}$-module even for nice maps $R \to S$. That of course implies that $R \to S$ cannot be b-nil formally smooth. So one might naturally wonder if \autoref{eg:regular-not-bil-nil-smooth} is a formally smooth map. This also isn't the case, as the following direct argument shows.

\begin{remark}
\label{rem:infinite-field-char-p-not-formally-smooth}
    Let $R = k[t]_{(t)}$ and $\widehat{R} = k\llbracket t\rrbracket$, for a non-$F$-finite field of characteristic $p > 0$ as in \autoref{eg:regular-not-bil-nil-smooth}. Note that $R \to \widehat{R}$ is also not formally smooth. Indeed, this follows because $\Omega_{\widehat{R}/R}$ is not a projective $\widehat{R}$-module, see \cite[\href{https://stacks.math.columbia.edu/tag/031J}{Tag 031J}]{stacks-project}. Assume for contradiction that  $\Omega_{\widehat{R}/R}$ is $\widehat{R}$-projective, and hence, $\widehat{R}$-free. Since the formation of $\Omega$ commutes with base change, we have $\Omega_{\widehat{R}/R} \otimes_R k \cong \Omega_{\widehat{R} \otimes_R k/k} = 0$, and so, $\Omega_{\widehat{R}/R} = 0$. Thus, it is enough to show that $\Omega_{\widehat{R}/R} \neq 0$, that is, $R \to \widehat{R}$ is not formally unramified. We will show the localization $R[1/t] \to \widehat{R}[1/t]$ is not formally unramified. Note $R[1/t] = k(t)$ and $\widehat{R}[1/t] = k((t))$, and now recall that a field extension $L/K$ of characteristic $p > 0$ is formally unramified if and only if $L/K$ has an empty $p$-basis \cite[\href{https://stacks.math.columbia.edu/tag/07P2}{Tag 07P2}]{stacks-project}, that is, $L/K$ is formally unramified if and only if $L = KL^p = K[L^p] = L^p[K]$. But for $L = k((t))$ and $K = k(t)$, $KL^p \neq L$ precisely because $k$ is not $F$-finite. Indeed, one can show that the set $L' \subset L$ of Laurent series $\sum _{i \geq - N} a_i t^i$ such that $[k^p(a_{-N}, a_{-N+1}, \dots ) : k^p] < \infty$ is a subfield of $L$ containing both $k[T]$ and $L^p$, and therefore $KL^p \subset L'$. Since $[k : k^p] = \infty$, we have $KL^p \subset L' \subsetneq L$ as needed.
\end{remark}

We will systematically study the (b-nil) formal smoothness property for ideal adic completions of rings with respect to finitely generated ideals in \autoref{sec:formal-smooth-completion}.

\section{Comparing b-nil formally \'etale and formally \'etale maps}
\label{sec:Section-3}

Let $\varphi \colon R \to S$ be a map of $\mathbf{F}_p$-algebras. We have seen in the previous section that if the relative Frobenius $F_\varphi$ is an isomorphism, then $\varphi$ is formally \'etale (in fact, $\varphi$ satisfies the stronger property of being b-nil formally \'etale). Conversely, if $\varphi \colon R \to S$ is formally \'etale, then $F_\varphi$ is known to be an isomorphism in the following cases (in increasing levels of generality):
\begin{enumerate}[(1)]
    \item $\varphi$ is of finite presentation, i.e., $\varphi$ is \'etale; see \cite[Expos\'e~XV, $n^0~2$, Proposition~2~c)]{SGA5}.
    \item $\varphi$ is a filtered colimit of \'etale maps, i.e., $\varphi$ is ind-\'etale; see (3) below and \cite[\href{https://stacks.math.columbia.edu/tag/092N}{Tag 092N}]{stacks-project}. 
    \item $\varphi$ is \emph{weakly \'etale}, i.e., $\varphi$ and the induced multiplication map $S \otimes_R S \to S$ are both flat; see \cite[\href{https://stacks.math.columbia.edu/tag/0F6W}{Tag 0F6W}]{stacks-project}. Alternatively, one can use \autoref{thm:relative-Frobenius-iso-b-nil formally-etale} because a weakly \'etale ring map is b-nil formally \'etale by \cite[Theorem~1]{deJongOlander}. Indeed, a weakly \'etale ring map satisfies the unique lifting property with respect to all Henselian pairs $(A,I)$, and if $I$ is a bounded nil ideal of $A$, then $(A,I)$ is a Henselian pair by \cite[\href{https://stacks.math.columbia.edu/tag/0ALI}{Tag 0ALI}]{stacks-project}.
\end{enumerate}

In this section, we prove some cases of formally \'etale maps with isomorphic relative Frobenius that do not follow from the above ones. Our first result is about maps of rings with finite Frobenius, i.e., $F$-finite rings.

\begin{theorem}
    \label{thm:formally-etale-F-finite}
    Let $\varphi \colon R \to S$ be a formally \'etale map of rings of prime characteristic $p > 0$ such that $F_\varphi$ is a finitely presented ring map (for e.g., if $F_R$ is finite and $F_S$ is finitely presented). Then $F_\varphi$ is an isomorphism.
\end{theorem}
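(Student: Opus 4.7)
The plan is to follow the strategy indicated just after the theorem statement: show that $F_\varphi$ is étale and radicial, invoke the standard fact that a radicial étale morphism is an open immersion, and use surjectivity on spectra to conclude that $F_\varphi$ is an isomorphism.

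First I would verify that $F_\varphi$ is formally étale. The ring map $F_*\varphi$, which is literally $\varphi$, factors as
\[
F_*R \xrightarrow{\id_{F_*R} \otimes_R \varphi} S^{(p)} \xrightarrow{F_\varphi} F_*S,
\]
where the first arrow is the base change of $\varphi$ along $F_R$. Since $\varphi$ is formally étale, both the composition and the first arrow are formally étale, so the classical cancellation property (the analog of \autoref{lemma-standard}~\autoref{lem:lemma-standard.4} for formally étale maps, immediate from the lifting-property definition) yields formal étaleness of $F_\varphi$. Combined with the hypothesis that $F_\varphi$ is of finite presentation, this makes $F_\varphi$ étale. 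Next, $F_\varphi$ is radicial: given two $S^{(p)}$-algebra maps $\psi_1, \psi_2 \colon S \to K$ to a field $K$, the equality $\psi_1 \circ F_\varphi = \psi_2 \circ F_\varphi$ forces $\psi_i(s)^p = \psi_i(F_\varphi(1 \otimes s))$ to be independent of $i$ for every $s \in S$; since $K$ has characteristic $p$, extracting $p$-th roots gives $\psi_1 = \psi_2$. By the standard fact that a radicial étale morphism of schemes is an open immersion, $\Spec(F_\varphi)$ is an open immersion.

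To conclude that $F_\varphi$ is an isomorphism it remains to establish surjectivity of $\Spec(F_\varphi)$. From the factorization $F_S = F_\varphi \circ (F_R \otimes_R \id_S)$ of rings, I read off on spectra that $\Spec(F_S)$ equals the composition of $\Spec(F_\varphi)$ with $\Spec(F_R \otimes_R \id_S)$. The map $\Spec(F_S)$ is a bijection because the absolute Frobenius preserves prime ideals, and $\Spec(F_R \otimes_R \id_S)$ is the base change along $\Spec(\varphi)$ of the universal homeomorphism $\Spec(F_R)$, hence itself a bijection. It follows that $\Spec(F_\varphi)$ is bijective, and so the open immersion $\Spec(F_\varphi)$ is surjective and therefore an isomorphism of schemes; thus $F_\varphi$ is an isomorphism of rings. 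I do not anticipate a serious obstacle: the argument is essentially an assembly of standard facts, with the only subtle point being the cancellation step for classical formal étaleness in the first paragraph, which is handled directly from the definition.
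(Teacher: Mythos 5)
Your proposal is correct and follows essentially the same route as the paper's proof: cancellation gives that $F_\varphi$ is formally \'etale, hence \'etale by finite presentation, and then radicial plus \'etale forces $\Spec(F_\varphi)$ to be an open immersion which is surjective, hence an isomorphism. The only (harmless) difference is that you verify universal injectivity and surjectivity of $\Spec(F_\varphi)$ by hand via field-valued points and the factorization $F_S = F_\varphi \circ (F_R \otimes_R \id_S)$, whereas the paper simply invokes the known fact that $\Spec(F_\varphi)$ is a universal homeomorphism.
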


\begin{proof}
    Using the diagram
   \[
   \begin{tikzcd}[cramped]
	R && S \\
	{F_*R} && {F_*R\otimes_RS} \\
	&&& {F_*S}
	\arrow["\varphi", from=1-1, to=1-3]
	\arrow["{F_R}"', from=1-1, to=2-1]
	\arrow["{F_R\otimes_R\id_S}", from=1-3, to=2-3]
	\arrow["{F_S}", curve={height=-24pt}, from=1-3, to=3-4]
	\arrow["{\id_{F_*R}\otimes_R\varphi}"', from=2-1, to=2-3]
	\arrow["{F_*\varphi}"', curve={height=24pt}, from=2-1, to=3-4]
	\arrow["{\exists! F_\varphi}"{description}, color={rgb,255:red,214;green,92;blue,92}, from=2-3, to=3-4]
\end{tikzcd}
\]
    we see that since $F_*\varphi  = F_\varphi \circ (\id_{F_*R}\otimes \varphi)$ is formally \'etale and $\id_{F_*R}\otimes\varphi$ is formally \'etale by base change, $F_\varphi$ is formally \'etale as well (the proof is very similar to that of \autoref{lemma-standard}~\autoref{lem:lemma-standard.8}). Thus, $F_\varphi$ is \'etale since it is finitely presented. Since $\Spec(F_\varphi)$ is a universal homeomorphism, it is universally injective. Thus, $\Spec(F_\varphi)$ is an open immersion by \cite[\href{https://stacks.math.columbia.edu/tag/025G}{Tag 025G}]{stacks-project}, and hence an isomorphism of affine schemes, that is, $F_\varphi$ is an isomorphism of rings.
\end{proof}

\begin{remark}
\label{rem:following-bnil-formally-etale-Ffinite}
\autoref{eg:formally-etale-not-b-nil formally-etale}~\autoref{eg:formally-etale-not-b-nil formally-etale.b} shows that in \autoref{thm:formally-etale-F-finite} one cannot relax the assumption $F_\varphi$ is finitely presented to $F_\varphi$ is finite type (equivalently, $F_\varphi$ is finite). Namely, we construct a non-Noetherian semi-perfect ring $R$ of characteristic $p > 0$ and an ideal $I$ of $R$ such that $I = I^2$ but $I \neq I^{[p]}$. Then $R \to R/I$ is formally \'etale (hence also formally smooth) but $R \to R/I$ is not b-nil formally smooth and hence not b-nil formally \'etale. Indeed, a necessary condition for a map of $\mathbf{F}_p$-algebras is for its relative Frobenius to be injective (\autoref{prop:b-nil-formally-smooth-injective-relative-Frobenius-projective}). However, the relative Frobenius of $R \to R/I$ can be identified with the canonical map $R/I^{[p]} \twoheadrightarrow R/I$, which by our construction of $I$ is surjective but not injective. Since $R$ is semi-perfect so is $R/I$. The point here is that $F_{(R/I)/R}$ is finite (equivalently, of finite type) but not finitely presented.
\end{remark}

Our final main result is about formally \'etale maps of Noetherian rings; see \autoref{thm:formally-etale-Noetherian-relative-Frobenius-iso}. The proof builds on the following observations.

\begin{lemma}
    \label{lem:cyclically-pure-birational-extension}
    Let $R \hookrightarrow S$ be an extension of rings such that $S \subset Q(R)$, the total ring of fractions of $R$. Suppose that for all principal ideals $I$ of $R$, $IS \cap R = I$ (for e.g., if $R \hookrightarrow S$ is universally injective or faithfully flat). Then $R = S$.
\end{lemma}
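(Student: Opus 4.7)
The plan is to reduce the statement to a one-line argument using only the definition of the total ring of fractions. Given any $s \in S \subseteq Q(R)$, I would first write $s$ as a fraction $a/b$ with $a \in R$ and $b$ a non-zerodivisor of $R$. The equation $bs = a$ then shows $a \in bS \cap R$.

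Applying the hypothesis to the principal ideal $bR$, this intersection equals $bR$, so $a = br$ for some $r \in R$. Consequently $b(s - r) = 0$ in $S$. But $b$, being a non-zerodivisor in $R$, becomes a unit in $Q(R)$, and in particular is a non-zerodivisor in the subring $S \subseteq Q(R)$. Therefore $s = r \in R$, which gives the reverse inclusion $S \subseteq R$ and hence equality.

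For the parenthetical examples, recall that an injection $R \hookrightarrow S$ is cyclically pure (i.e.\ satisfies $IS \cap R = I$ for every ideal $I$) whenever $R/I \to R/I \otimes_R S = S/IS$ is injective for every $I$. This injectivity is the defining property of universal injectivity, and it is an immediate consequence of faithful flatness, so both listed cases fall into the hypothesis of the lemma.

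There is no serious obstacle here: the one subtle point is that although $b$ is only assumed to be a non-zerodivisor in $R$, it automatically remains a non-zerodivisor in $S$ because it is already a unit in the larger ring $Q(R)$. This small observation is exactly what allows the cancellation $b(s-r)=0 \Rightarrow s=r$ to go through in $S$ rather than merely in $Q(R)$.
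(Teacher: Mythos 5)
Your proof is correct and follows essentially the same route as the paper's: write $s=a/b$ with $b$ a non-zerodivisor, use $bs=a$ together with the hypothesis on the principal ideal $bR$ to get $a\in bR$, and conclude $s\in R$ by cancelling $b$ inside $Q(R)$. The only cosmetic difference is that the paper invokes the hypothesis for both $aR$ and $bR$, while you only need it for $bR$; your handling of the parenthetical examples (universal injectivity, resp.\ faithful flatness, gives $IS\cap R=I$) is also fine.
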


\begin{proof}
    Since $S \subset Q(R)$, every element $x \in S$ can be expressed as fraction $a/b$ where $a, b \in R$ and $b$ is a non-zerodivisor. Since $a = xb$ in $S$, we then get $(aR)S = aS \subset bS = (bR)S$, and so, $aR = (aR)S \cap R \subset (bR)S \cap R = bR$. Hence, $b|a$, and so, $x = a/b \in R$.
\end{proof}

\begin{lemma}
    \label{lem:formally-unramified-field-extension-trivial}
    Let $L/K$ be a formally unramified extension of fields of characteristic $p > 0$ such that $L^p \subset K \subset L$. Then $L = K$.
\end{lemma}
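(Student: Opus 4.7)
The plan is to invoke the characterization, already cited in \autoref{rem:infinite-field-char-p-not-formally-smooth}, that a field extension $L/K$ of characteristic $p > 0$ is formally unramified if and only if $L$ has an empty $p$-basis over $K$, equivalently $\Omega_{L/K} = 0$, equivalently $L = K[L^p]$ (see \cite[\href{https://stacks.math.columbia.edu/tag/07P2}{Tag 07P2}]{stacks-project}). Once we have this, the hypothesis $L^p \subset K$ forces $K[L^p] \subset K$, so the equality $L = K[L^p]$ collapses to $L = K$.

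Concretely, the proof is a single line: by formal unramifiedness, $L = K[L^p]$; by the hypothesis $L^p \subset K$, the right-hand side equals $K$. There is essentially no obstacle, since both inputs are available without any further work. The only point worth stressing is that the characterization of formal unramifiedness via the triviality of $\Omega_{L/K}$ is meant with respect to the discrete topology, which matches the convention established at the start of the Preliminaries section of this paper.
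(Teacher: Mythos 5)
Your proof is correct and follows essentially the same route as the paper's: both rest on the Stacks Project Tag 07P2 characterization of formally unramified field extensions via $p$-bases (equivalently $\Omega_{L/K}=0$, equivalently $L=K[L^p]$), after which $L^p\subset K$ immediately forces $L=K$. The paper merely phrases it by taking a $p$-basis $\Sigma$ and concluding $\Sigma=\emptyset$, which is the same argument.
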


\begin{proof}
    By assumption, the compositum $KL^p = K$. If $\Sigma$ is a $p$-basis of $L/K$, then $L = KL^p(\Sigma) = K(\Sigma)$ and $\Omega_{L/K}$ is an $L$-vector space with a basis in bijection with $\Sigma$; see for instance \cite[\href{https://stacks.math.columbia.edu/tag/07P2}{Tag 07P2}]{stacks-project}. Since $L/K$ is formally unramified, $\Omega_{L/K} = 0$, and so, $\Sigma = \emptyset$. Then $L = K(\Sigma) = K$.
\end{proof}

\begin{proposition}
    \label{prop:formally-etale-Noetherian-reduced}
    Let $\varphi \colon R \to S$ be a formally \'etale map of Noetherian rings of prime characteristic $p > 0$. Suppose $R$ or $S$ is reduced. Then $F_\varphi$ is an isomorphism.
\end{proposition}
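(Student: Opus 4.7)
The plan is to establish injectivity and surjectivity of $F_\varphi$ separately, reducing the case where $R$ is reduced to the case where $S$ is reduced. Since $\varphi$ is formally \'etale, it is formally smooth, so by \autoref{thm:Radu-Andre}~\autoref{thm:Radu-Andre.c}, $\varphi$ is a regular map and $F_\varphi$ is faithfully flat; in particular, $F_\varphi$ is injective. If $R$ is reduced, the fact that $\varphi$ is flat with geometrically reduced fibers forces $S$ to be reduced as well by \cite[\href{https://stacks.math.columbia.edu/tag/033B}{Tag 033B}]{stacks-project}. From here I would assume $S$ is reduced, which makes $S^{(p)}$ reduced too since $F_\varphi$ embeds it into $S$.

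The main task is surjectivity of $F_\varphi$. The first observation is that $F_\varphi$ is formally unramified: since $\varphi$ is formally \'etale, $\Omega_{S/R} = 0$, and the first fundamental exact sequence for $R \to S^{(p)} \to S$ gives $\Omega_{S/S^{(p)}} = 0$. The second observation is that $\Spec F_\varphi$ is a bijection, since the factorization $F_S = F_\varphi \circ (F_R \otimes_R \id_S)$ together with the fact that the absolute Frobenius and its base change induce bijections on spectra forces $\Spec F_\varphi$ to do the same. Combined with faithful flatness and going-down, this yields a bijection between the (finitely many) minimal primes of $S^{(p)}$ and of $S$; moreover $S^{(p)}$ is Noetherian (by descent, as noted in the proof of \autoref{thm:Radu-Andre}~\autoref{thm:Radu-Andre.c}) and reduced, so $Q(S^{(p)})$ and $Q(S)$ each decompose as finite products of the localizations at their respective minimal primes.

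Next I would localize at corresponding minimal primes $\mathfrak{p} = F_\varphi^{-1}(\mathfrak{q})$ with $\mathfrak{q} \subset S$. Both $S^{(p)}_\mathfrak{p}$ and $S_\mathfrak{q}$ are fields, the extension $S^{(p)}_\mathfrak{p} \hookrightarrow S_\mathfrak{q}$ remains formally unramified (since $\Omega$ commutes with localization), and for any $s/t \in S_\mathfrak{q}$ with $t \notin \mathfrak{q}$ one has $(s/t)^p = F_\varphi(1 \otimes s)/F_\varphi(1 \otimes t) \in S^{(p)}_\mathfrak{p}$, so $S_\mathfrak{q}^p \subset S^{(p)}_\mathfrak{p}$. \autoref{lem:formally-unramified-field-extension-trivial} then forces $S^{(p)}_\mathfrak{p} = S_\mathfrak{q}$. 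Taking products over the minimal primes identifies $Q(S^{(p)})$ with $Q(S)$; in particular $S \subset Q(S^{(p)})$. Since $F_\varphi$ is faithfully flat (hence cyclically pure), \autoref{lem:cyclically-pure-birational-extension} now forces $F_\varphi \colon S^{(p)} \to S$ to be surjective, completing the proof.

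I expect the main obstacle to be the local-to-global bookkeeping in the third paragraph: one needs that the bijection $\Spec F_\varphi$ between minimal primes makes the product decompositions of the total rings of fractions compatible, and that formal unramifiedness descends cleanly to the residue field extensions at those generic points. Both ingredients crucially use the Noetherian and reducedness hypotheses, which is why the reduction in the first paragraph is essential; once the global hypotheses are packaged into the two cited lemmas on birational extensions and on formally unramified field extensions, the proof is essentially a clean bookkeeping exercise.
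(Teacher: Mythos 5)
Your proposal is correct and follows essentially the same route as the paper's proof: injectivity and the reduction to $S$ reduced via \autoref{thm:Radu-Andre}~\autoref{thm:Radu-Andre.c}, the correspondence of minimal primes from $\Spec(F_\varphi)$ being a homeomorphism, the identification of total rings of fractions by applying \autoref{lem:formally-unramified-field-extension-trivial} to the localized field extensions (using $(S_{\frq})^p$ landing in the subfield), and finally \autoref{lem:cyclically-pure-birational-extension} to conclude surjectivity. The only cosmetic difference is that you obtain formal unramifiedness of $F_\varphi$ directly from $\Omega_{S/R}=0$ and the first fundamental sequence, whereas the paper deduces the (stronger, but unneeded) formal \'etaleness of $F_\varphi$ by the cancellation argument from \autoref{thm:formally-etale-F-finite}.
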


\begin{proof}
    Since formally \'etale maps are formally smooth and both $R$ and $S$ are Noetherian, $\varphi$ is a flat map with geometrically regular fibers and $F_\varphi$ is a faithfully flat map of Noetherian rings by \autoref{thm:Radu-Andre}\autoref{thm:Radu-Andre.c}. Note that this implies that if $R$ is reduced, then so is $S$ by \cite[\href{https://stacks.math.columbia.edu/tag/0C21}{Tag 0C21}]{stacks-project}. Hence, it suffices to show the theorem assuming only that $S$ is reduced. 
    
    As we have seen in the proof of \autoref{thm:formally-etale-F-finite}, $F_\varphi$ is also formally \'etale. We claim that this implies $F_\varphi$ is an isomorphism. Let $\im(F_\varphi) \coloneqq R[S^p]$. By the injectivity of $F_\varphi$, it is enough to show $R[S^p] = S$. We have a faithfully flat formally \'etale and purely inseparable map of reduced Noetherian rings $R[S^p] \hookrightarrow S$. By \autoref{lem:cyclically-pure-birational-extension}, it suffices to show that the total ring of fractions $Q(R[S^p])$ equals the total ring of fractions $Q(S)$. Let $\frp_1,\dots,\frp_n \in \Spec(S)$ be the distinct minimal primes and let $\frq_i \coloneqq R[S^p] \cap \frp_i$. Note $\frq_1,\dots,\frq_n$ are the distinct minimal primes of $R[S^p]$ (one can use that $\Spec(F_\varphi)$ is a homeomorphism to see this). Since $Q(R[S^p]) \cong \prod_{i=1}^n R[S^p]_{\frq_i}$ and $Q(S) \cong \prod_{i=1}^n S_{\frp_i}$ (\cite[\href{https://stacks.math.columbia.edu/tag/02LX}{Tag 02LX}]{stacks-project}), it suffices to show that the extension of fields $R[S^p]_{\frq_i} \hookrightarrow S_{\frp_i}$ is an isomorphism for all $i$. Now, $R[S^p]_{\frq_i} \hookrightarrow S_{\frp_i}$ is a formally \'etale extension of fields by base change of the formally \'etale map $R[S^p] \hookrightarrow S$ and a further localization. Moreover, $(S_{\frp_i})^p \subset R[S^p]_{\frq_i}$ since the composition $S \hookrightarrow R[S^p] \hookrightarrow S$ is the absolute Frobenius on $S$ (the first map raises every element to the $p$-th power). Thus, we have a formally \'etale (hence formally unramified) extension of fields $S_{\frp_i}/R[S^p]_{\frq_i}$ such that $(S_{\frp_i})^p \subset R[S^p]_{\frp_i}$. Then $R[S^p]_{\frp_i} \cong S_{\frq_i}$ by  \autoref{lem:formally-unramified-field-extension-trivial}.
\end{proof}

As a consequence of the previous proposition, we now get unconditionally that:

\begin{theorem}
    \label{thm:formally-etale-Noetherian-relative-Frobenius-iso}
    Let $\varphi \colon R \to S$ be a map of Noetherian rings of prime characteristic $p > 0$. The following are equivalent:
    \begin{enumerate}
        \item $\varphi$ is formally \'etale.\label{thm:formally-etale-Noetherian-relative-Frobenius-iso.a}
        \item $F_\varphi$ is an isomorphism.\label{thm:formally-etale-Noetherian-relative-Frobenius-iso.b}
        \item $\varphi$ is b-nil formally \'etale.\label{thm:formally-etale-Noetherian-relative-Frobenius-iso.c}
    \end{enumerate}
\end{theorem}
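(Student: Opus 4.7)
The plan is to establish the circle $(1) \Rightarrow (2) \Rightarrow (3) \Rightarrow (1)$. Of these, $(2) \Rightarrow (3)$ is already contained in \autoref{thm:relative-Frobenius-iso-b-nil formally-etale}, while $(3) \Rightarrow (1)$ is \autoref{lemma-standard}~\autoref{lem:lemma-standard.1}. The substantive task is therefore $(1) \Rightarrow (2)$: any formally \'etale map of Noetherian $\mathbf{F}_p$-algebras has isomorphic relative Frobenius. I would first apply \autoref{thm:Radu-Andre}~\autoref{thm:Radu-Andre.c} to obtain that $\varphi$ is a regular map and $F_\varphi$ is faithfully flat, hence injective. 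It then remains to show $F_\varphi$ is surjective. Identifying $F_*S$ with $S$ as a ring, the image of $F_\varphi$ is the subring $R[S^p] \subseteq S$ generated by $\varphi(R)$ and $\{s^p : s \in S\}$, so the goal reduces to proving $R[S^p] = S$.

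My strategy is a deformation argument through the nilradical. Let $I = N_R$, which is nilpotent because $R$ is Noetherian, say $I^n = 0$. Base-changing $\varphi$ along $R \twoheadrightarrow R/I$ yields a formally \'etale map $\bar\varphi \colon R/I \to S/IS$ of Noetherian rings whose source is reduced. By \autoref{prop:formally-etale-Noetherian-reduced}, $F_{\bar\varphi}$ is an isomorphism, which unpacks as $(R/I)[(S/IS)^p] = S/IS$. Since the image of $R[S^p]$ under the projection $S \twoheadrightarrow S/IS$ is precisely $(R/I)[(S/IS)^p]$, this lifts to the relation $S = R[S^p] + IS$ inside $S$.

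Finally, observe that $R[S^p]$ is a subring of $S$ containing $\varphi(R)$ and in particular $\varphi(I)$, so $I \cdot R[S^p] \subseteq R[S^p]$. Iterating the relation $S = R[S^p] + IS$ then gives
\[
S \;=\; R[S^p] + IS \;\subseteq\; R[S^p] + I(R[S^p] + IS) \;\subseteq\; R[S^p] + I^2 S \;\subseteq\; \cdots \;\subseteq\; R[S^p] + I^n S \;=\; R[S^p],
\]
so $F_\varphi$ is surjective, and combined with the injectivity from faithful flatness it is an isomorphism. The main obstacle is locating the right deformation: once Grothendieck's regularity theorem supplies injectivity of $F_\varphi$ and \autoref{prop:formally-etale-Noetherian-reduced} settles the reduced case, the nilpotence of $N_R$ provided by the Noetherian hypothesis transports surjectivity from $S/IS$ back to $S$ via the above iteration, bypassing any need for finite generation of $S$ over $R[S^p]$ or a direct Nakayama-style input.
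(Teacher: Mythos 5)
Your proposal is correct and follows essentially the same route as the paper: reduce to showing a formally \'etale map has surjective relative Frobenius, get injectivity from Grothendieck's regularity theorem plus Radu--Andr\'e, settle the reduced case via \autoref{prop:formally-etale-Noetherian-reduced}, and deform through the nilpotent nilradical of $R$. The only cosmetic difference is that your explicit iteration $S = R[S^p] + IS \subseteq \cdots \subseteq R[S^p] + I^nS$ is just the nilpotent Nakayama lemma (which the paper cites) unwound by hand on the cokernel of $F_\varphi$.
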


\begin{proof}
By \autoref{thm:relative-Frobenius-iso-b-nil formally-etale} and \autoref{cor:relative-Frobenius-iso-implies-formally-etale}, it suffices to show \autoref{thm:formally-etale-Noetherian-relative-Frobenius-iso.a} $\implies$ \autoref{thm:formally-etale-Noetherian-relative-Frobenius-iso.b}. As in the proof of \autoref{prop:formally-etale-Noetherian-reduced}, we see that $\varphi$ is a regular map. If $I$ is the nilradical of $R$, then $I$ is a nilpotent ideal since $R$ is Noetherian. By base change, the induced map $\overline{\varphi} \colon R_{\red} \to S/IS$ is formally \'etale and regular. Note that this implies $S/IS$ is reduced by \cite[\href{https://stacks.math.columbia.edu/tag/0C21}{Tag 0C21}]{stacks-project} again, and so, $IS$ must be the nilradical of $S$ (since certainly $IS$ is contained in the nilradical). Thus, we have a co-cartersian square
\[\begin{tikzcd}[cramped]
	R & S \\
	{R_{\red}} & {S_{\red}}
	\arrow["\varphi", from=1-1, to=1-2]
	\arrow[two heads, from=1-1, to=2-1]
	\arrow[two heads, from=1-2, to=2-2]
	\arrow["{\varphi_{\red}}"', from=2-1, to=2-2],
\end{tikzcd}\]
where $\varphi_{\red}$ is a formally \'etale map of reduced Noetherian rings. Hence, \autoref{prop:formally-etale-Noetherian-reduced} implies that $F_{\varphi_{\red}}$ is an isomorphism. However, the formation of relative Frobenius commutes with base change. 
Thus, $\id_{R_{\red}} \otimes_R F_{\varphi} = F_{\varphi_{\red}}$, that is, after killing a nilpotent ideal of $R$, the map $F_{\varphi_{\red}}$, induced by the $R$-algebra map $F_{\varphi}$, becomes surjective. Then by the nilpotent version of Nakayama's lemma \cite[\href{https://stacks.math.columbia.edu/tag/00DV}{Tag 00DV~(11)}]{stacks-project}, $F_{\varphi}$ is surjective. But $F_{\varphi}$ is faithfully flat since $R \to S$ is a regular map of Noetherian rings, and so, $F_{\varphi}$ is injective.
\end{proof}

\begin{remarks}
\label{rem:concluding-remarks-main-thm-Noetherian}
{\*}
\begin{enumerate}
    \item Since a formally \'etale map of Noetherian rings is automatically flat (\autoref{thm:Radu-Andre}~\autoref{thm:Radu-Andre.c}), \autoref{thm:formally-etale-Noetherian-relative-Frobenius-iso} shows that a formally \'etale map of Noetherian $\mathbf{F}_p$-algebras is \emph{relatively perfect} in the sense of \cite[Definition~3.1]{FinkRelativelyPerfect} (see \cite[Remark~3.4]{FinkRelativelyPerfect}), thereby improving \cite[Theorem~5.3]{FinkRelativelyPerfect}.

    \item If $\varphi \colon R \to S$ is a formally unramified map of $\mathbf{F}_p$-algebras, then the relative Frobenius $F_{\varphi}$ is also formally unramified because $F_*\varphi = F_{\varphi} \circ (\id_{F_*R} \otimes_R \varphi)$ is formally unramified. One can now check that the proofs of \autoref{prop:formally-etale-Noetherian-reduced} and \autoref{thm:formally-etale-Noetherian-relative-Frobenius-iso} show that a regular map of Noetherian $\mathbf{F}_p$-algebras that is formally unramified has isomorphic relative Frobenius, that is, it is b-nil formally \'etale.

    \item Given the results of Radu, Andr\'e and Grothendieck (see \autoref{thm:Radu-Andre}), the main ingredient in \autoref{thm:formally-etale-Noetherian-relative-Frobenius-iso} is the surjectivity of $F_\varphi$ assuming that $\varphi \colon R \to S$ is a formally \'etale map of $\mathbf{F}_p$-algebras. A different proof of surjectivity can be given than the one above using the following non-trivial result: Suppose $A \to B$ is a map of Noetherian $\mathbf{F}_p$-algebras. Assume there exists a ring map $B \to A$ such that $A \to B \to A = F^e_A$ and $B \to A \to B = F^e_B$, i.e., $A \to B$ is invertible up to a power of Frobenius. Then $A \to B$ is formally unramified if and only if $A \to B$ is surjective. An accessible proof of this result appears in \cite[Theorem~11.2]{MaPolstraFSing}, who attribute it to Andr\'e. Coming back to our situation of a formally \'etale map $\varphi \colon R \to S$, we know $S^{(p)}$ is a Noetherian ring by \autoref{thm:Radu-Andre}~\autoref{thm:Radu-Andre.c}. Since $F_\varphi$ is formally \'etale if $\varphi$ is, $F_\varphi$ is a formally unramified map of Noetherian $\mathbf{F}_p$-algebras. But the canonical map $F_R \otimes_R \id_S \colon S \to S^{(p)}$ is an inverse of $F_{\varphi}$ up to Frobenius. Then $F_{\varphi}$ is surjective by the Andr\'e-Ma-Polstra result. Note, however, the proof of this latter result is more technical than the argument for surjectivity we have given above.
\end{enumerate}
\end{remarks}

    It would be very nice to have a direct proof of the implication \autoref{thm:formally-etale-Noetherian-relative-Frobenius-iso.a} $\implies$ \autoref{thm:formally-etale-Noetherian-relative-Frobenius-iso.c} in \autoref{thm:formally-etale-Noetherian-relative-Frobenius-iso} using the fact that a locally nilpotent ideal in a Noetherian ring is nilpotent, and without using the deep results of Grothendieck, Radu, Andr\'e. However, we have been unsuccessful so far except in the case that $R, S$ are both $F$-finite: Suppose $R \to S$ is a formally smooth (resp. \'etale) map of $F$-finite $\mathbf{F}_p$-algebras such that $S$ is Noetherian, and suppose we are given a solid commutative diagram \autoref{equn-thediagram} in which $I^{[p]} = 0$. Let $B = \im(S \to A/I)$ and $B'$ be the inverse image of $B$ along $A \twoheadrightarrow A/I$. Then $B' \to B$ is surjective and $\ker(B' \twoheadrightarrow B) = B' \cap I$ also satisfies $\ker(B' \twoheadrightarrow B)^{[p]} = 0$. Note that any lift $S \to A$ of $S \to A/I$ along $A \twoheadrightarrow A/I$ must factor via $B'$. Thus, one sees by replacing $A/I$ with $B$ and $A$ with $B'$ that to show there is a (resp. is a unique) dashed arrow $S \to A$ we may assume $A/I$ is Noetherian and $F$-finite and $S \to A/I$ is surjective. Then one further reduces to the case where $A$ is Noetherian by the following lemma. 

\begin{lemma}
\label{lem:F-finite-alternate-lemma}
    Let $A$ be an $\mathbf{F}_p$-algebra for some prime number $p>0$ and $I \subset A$ an ideal such that $I^{[p]} = 0$. Assume that $A/I$ is Noetherian and $F$-finite. Then for any finite collection of ring maps $\varphi_i : R_i \to A, i = 1, \dots , n$ where each $R_i$ is an $\mathbf{F}_p$-algebra which is $F$-finite, there is a subring $A' \subset A$ such that
    \begin{enumerate}
        \item $A'$ is Noetherian.\label{lem:F-finite-alternate-lemma.a}
        \item The composition $A' \hookrightarrow A \twoheadrightarrow A/I$ is surjective.\label{lem:F-finite-alternate-lemma.b}
        \item For all $i$, the image of $R_i \to A$ is contained in $A'$.\label{lem:F-finite-alternate-lemma.c}
        \item If $J \coloneqq A' \cap I = \ker(A' \twoheadrightarrow A/I)$, then $J^{[p]} = 0$, and so, $J$ is nilpotent.\label{lem:F-finite-alternate-lemma.d}
    \end{enumerate}
\end{lemma}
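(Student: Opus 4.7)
The plan is to exploit the factorization of the absolute Frobenius afforded by the hypothesis $I^{[p]} = 0$. Write $B \coloneqq A/I$ and $\phi \colon A \twoheadrightarrow B$ for the projection. As in \autoref{lem:lifting-lemma-Frobenius}, the condition $I^{[p]} = 0$ yields a well-defined ring map $\psi \colon B \to A$ sending $b \mapsto \tilde{b}^{\, p}$ for any lift $\tilde{b} \in A$ of $b$, and satisfying $\psi\phi = F_A$. Consequently, the image of $\psi$ is exactly $A^p \coloneqq \{a^p : a \in A\}$, since $\psi(\phi(a)) = a^p$ for every $a \in A$. The crucial observation is that $A^p$ is therefore a ring quotient of the Noetherian ring $B$, and is itself Noetherian --- even though $A$ is allowed to be arbitrary.

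Given this, I would construct $A'$ directly. First, use $F$-finiteness of $B$ to pick $b_1, \dots, b_m \in B$ generating $B$ as a $B^p$-module, and lift them to $\tilde{b}_1, \dots, \tilde{b}_m \in A$. Second, for each $i$, use $F$-finiteness of $R_i$ to pick $r_{i,1}, \dots, r_{i,k_i} \in R_i$ generating $R_i$ as an $R_i^p$-module; then $\varphi_i(r_{i,j})$ generate $C_i \coloneqq \varphi_i(R_i)$ as a $C_i^p$-module, since $\varphi_i$ is surjective onto $C_i$. Set
\[
A' \coloneqq A^p\bigl[\tilde{b}_1, \dots, \tilde{b}_m, \varphi_i(r_{i,j}) : 1 \leq i \leq n,\ 1 \leq j \leq k_i\bigr] \subset A.
\]
Then $A'$ is a finitely generated algebra over the Noetherian ring $A^p$, so $A'$ is Noetherian by Hilbert's basis theorem, which gives \autoref{lem:F-finite-alternate-lemma.a}.

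The remaining conclusions are quick to verify. For \autoref{lem:F-finite-alternate-lemma.b}, $\phi(A^p) = B^p$ because $\phi(a^p) = \phi(a)^p$ and $\phi$ is surjective, so the image of $A'$ in $B$ contains $B^p[b_1, \dots, b_m] = B$. For \autoref{lem:F-finite-alternate-lemma.c}, observe $C_i^p \subset A^p \subset A'$ and $\varphi_i(r_{i,j}) \in A'$ by construction, so $C_i = C_i^p[\varphi_i(r_{i,j})] \subset A'$. For \autoref{lem:F-finite-alternate-lemma.d}, $J = A' \cap I \subset I$ forces $J^{[p]} \subset I^{[p]} = 0$; then $J$ is finitely generated by Noetherianness of $A'$, hence nilpotent by the observation recorded early in this section that finitely generated bounded nil ideals are nilpotent.

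The principal conceptual step --- and the only real obstacle --- is the opening move: noticing that $A^p$ is Noetherian via $\psi$. Without this leverage, there is no apparent handle on the size of $A$, which could a priori be wildly non-Noetherian. Once $A^p$ is pinned down as a Noetherian subring through which every $p$-th power of $A$ factors, adjoining finitely many lifts to upgrade $B^p$ to $B$ and to absorb the finitely many $F$-finite images $C_i$ is routine.
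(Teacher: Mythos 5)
Your proof is correct and is essentially the paper's own argument: the same subring $A' = A^p[\text{lifts of generators of } A/I,\ \varphi_i(\text{generators of } R_i)]$, the same key point that $I^{[p]}=0$ makes $A^p$ a quotient of the Noetherian ring $A/I$ (the paper phrases this as $F_*A$ being an $A/I$-algebra), and the same Hilbert basis and finitely-generated-bounded-nil arguments for (a) and (d).
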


\begin{proof}
    For each $i$, let $\{r_{ij}\}$ be a finite set of generators of $R_i$ as an algebra over $R_i^p$. Let $a_1, \dots , a_m$ be lifts to $A$ of generators of $A/I$ as an algebra over its subring of $p^{th}$-powers. Consider the subring
    $$
    A' = A^p[r_{ij}, a_k] \subset A.
    $$
    Then by choice of the $r_{ij}$ and $a_k$ we see that \autoref{lem:F-finite-alternate-lemma.b} and \autoref{lem:F-finite-alternate-lemma.c} hold. Finally, since $I^{[p]} = 0$, we see that $F_*A$ is an $A/I$-algebra, and $A'$ is the $A/I$-subalgebra generated by the finitely many elements $r_{ij}, a_k$, hence \autoref{lem:F-finite-alternate-lemma.a} holds. Finally, by the injectivity of $A' \hookrightarrow A$ and the fact that $I^{[p]} = 0$, we get $J^{[p]} = 0$. Then $J$ is nilpotent, i.e., $J^n = 0$ for some $n \gg 0$, because $J$ is finitely generated. This shows \autoref{lem:F-finite-alternate-lemma.d}.
\end{proof}

In other words, we have a direct demonstration of the following:

\begin{proposition}
    \label{prop:equivalences-b-nil-F-finite-case}
    Let $\varphi \colon R \to S$ be a map of $F$-finite $\mathbf{F}_p$-algebras. Suppose $S$ is Noetherian. Then:
    \begin{enumerate}
        \item $\varphi$ is b-nil formally smooth if and only if $\varphi$ is formally smooth.\label{prop:equivalences-b-nil-F-finite-case.a}
        \item $\varphi$ is b-nil formally unramified if and only if $\varphi$ is formally unramified.\label{prop:equivalences-b-nil-F-finite-case.b}
        \item $\varphi$ is b-nil formally \'etale if and only if $\varphi$ is formally \'etale.\label{prop:equivalences-b-nil-F-finite-case.c}
    \end{enumerate}
\end{proposition}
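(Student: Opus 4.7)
The direction b-nil formally X $\Rightarrow$ formally X in all three parts is \autoref{lemma-standard}\autoref{lem:lemma-standard.1}, so only the converse requires work. Fix a formally X map $\varphi\colon R \to S$ between $F$-finite $\mathbf{F}_p$-algebras with $S$ Noetherian, and by \autoref{lemma-standard}\autoref{lem:lemma-standard.7} consider a solid commutative square
\[
\begin{tikzcd}[cramped]
{A/I} & S \\
A & R
\arrow["f"', from=1-2, to=1-1]
\arrow[two heads, from=2-1, to=1-1]
\arrow[from=2-2, to=1-2]
\arrow[from=2-2, to=2-1]
\end{tikzcd}
\]
in which $I^{[p]} = 0$. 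The plan is to reduce the test datum to a Noetherian thickening $A' \twoheadrightarrow A'/J$ with $J$ nilpotent, at which point the classical formal X hypothesis on $\varphi$ can be invoked directly.

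The first step is the reduction already outlined just before \autoref{lem:F-finite-alternate-lemma}: replace $A/I$ by $B \coloneqq \im(S \to A/I)$ and $A$ by its preimage $B' \subset A$ under $A \twoheadrightarrow A/I$. Since any $R$-algebra lift $S \to A$ of $f$ must land in $B'$, nothing is lost; the new kernel $B' \cap I$ still satisfies $(B' \cap I)^{[p]} = 0$, and $B$, as a quotient of the Noetherian $F$-finite ring $S$, is itself Noetherian and $F$-finite. Thus we may assume $f$ is surjective and $A/I$ is Noetherian $F$-finite. For \autoref{prop:equivalences-b-nil-F-finite-case.a}, apply \autoref{lem:F-finite-alternate-lemma} to the single ring map $\varphi\colon R \to A$ to obtain a Noetherian subring $A' \subset A$ containing $\varphi(R)$ with $A' \twoheadrightarrow A/I$ surjective and nilpotent kernel $J = A' \cap I$. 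The formal smoothness of $\varphi$ now produces an $R$-algebra lift $S \to A'$ along $A' \twoheadrightarrow A'/J \cong A/I$ (iterating the square-zero lifting property across the finite filtration $J \supset J^2 \supset \cdots$); composing with $A' \hookrightarrow A$ yields the desired lift $S \to A$.

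For \autoref{prop:equivalences-b-nil-F-finite-case.b}, given two $R$-algebra lifts $g, h\colon S \to A$ of $f$, after performing the same reduction both maps factor through $B'$, and applying \autoref{lem:F-finite-alternate-lemma} to the collection $\{\varphi, g, h\}$ (each source being $F$-finite, since $S$ is) produces a Noetherian $A' \subset B'$ through which $g$ and $h$ both factor and whose kernel $J = A' \cap I$ is nilpotent; formal unramifiedness of $\varphi$ applied to the nilpotent thickening $A' \twoheadrightarrow A'/J$ then forces $g = h$ on $A'$, hence on $A$. Part \autoref{prop:equivalences-b-nil-F-finite-case.c} follows by combining \autoref{prop:equivalences-b-nil-F-finite-case.a} and \autoref{prop:equivalences-b-nil-F-finite-case.b}. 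The main obstacle throughout is precisely what \autoref{lem:F-finite-alternate-lemma} resolves: extracting a Noetherian approximation $A' \subset A$ whose intersection with $I$ is \emph{finitely generated} (hence nilpotent, since a finitely generated b-nil ideal is nilpotent) while simultaneously absorbing the relevant ring maps. This is where the $F$-finiteness of $R$ (through the generators $r_{ij}$) and the Noetherian $F$-finiteness of $A/I$ (bootstrapped from that of $S$) are both essential, and it is the precise ingredient that bridges the b-nil lifting hypothesis and the classical formal X lifting property.
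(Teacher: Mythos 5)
Your proposal is correct and follows essentially the same route as the paper: the same reduction replacing $A/I$ by $\im(S \to A/I)$ and $A$ by its preimage so that $A/I$ becomes Noetherian and $F$-finite, followed by \autoref{lem:F-finite-alternate-lemma} applied to $R \to A$ (resp.\ to $R \to A$ together with the two lifts) to produce a Noetherian subring with nilpotent kernel where classical formal smoothness (resp.\ unramifiedness) applies, and then combining the two for the \'etale case. The only blemish is the reuse of the symbol $\varphi$ for the structure map $R \to A$, which conflicts with $\varphi \colon R \to S$ but causes no mathematical issue.
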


\begin{proof}
    Again, the non-trivial assertions in all three cases is that formally smooth (resp. \'etale) implies b-nil formally smooth (resp. \'etale). The discussion above clearly implies that formally smooth implies b-nil formally smooth in our setting.

    To see that formally unramified implies b-nil formally \'unramified, we have to establish the uniqueness of the lift $S \to A$ given a commutative diagram 
    \[\begin{tikzcd}[cramped]
	{A/I} & S \\
	A & R
	\arrow["f"', from=1-2, to=1-1]
	\arrow[two heads, from=2-1, to=1-1]
	\arrow[from=2-2, to=1-2]
	\arrow[from=2-2, to=2-1]
\end{tikzcd}\]
such that $I^{[p]} = 0$. Assume $f_1, f_2 \colon S \to A$ are $R$-algebra lifts of $f$ along $A \twoheadrightarrow A/I$. We may assume $S \to A/I$ is surjective and so $A/I$ is Noetherian and $F$-finite by the discussion in the paragraph preceding \autoref{lem:F-finite-alternate-lemma}. Then apply that Lemma to the three maps $R \to A$, $f_1 \colon S \to A$ and $f_2 \colon S \to A$ to construct a Noetherian $R$-subalgebra $A' \subset A$ such that all these maps have image contained in $A'$, and the restriction of $A \to A/I$ to $A'$ is surjective and still has 
 a nilpotent ideal $J$ of $A'$ such that $f_1, f_2$ both have images contained in $A'$, $A'/J$ is a subring of $A/I$ and $f$ has image $A'/J$, and 
the induced maps $f_1', f_2' \colon S \to A'$ obtained by restricting the codomain of $f_1, f_2$ lift $S \twoheadrightarrow A/I =  A'/J$. By formal \'unramifiedness, $f'_1 = f'_2$, and so, $f_1 = f_2$.

Finally, a formally \'etale map in this setting is b-nil formally \'etale by the previous two cases.
\end{proof}

\begin{remark}
\label{rem:animated-relative-Frobenius}
    We record a result for animated Noetherian $\mathbf{F}_p$-algebras that was explained to us by Bhatt and that may be of interest to readers. The result is due to Lurie. Namely, let $A \to B$ be a map of Noetherian animated $\mathbf{F}_p$-algebras. Then $A \to B$ is formally \'etale if and only if the relative Frobenius $F_{B/A}$ is an isomorphism. Indeed, the variant for the $\mathbb{E}_{\infty}$-cotangent complex for Noetherian $\mathbb{E}_\infty$-rings follows from \cite[Proposition~3.5.5, Proposition~3.5.6]{LurieEllipticII}, and \cite[Remark~25.3.3.7, Proposition~25.3.4.2]{LurieSpectralAlgebraicGeometry} then imply that the animated/algebraic cotangent complex vanishes if and only if the $\mathbb{E}_\infty$ one does.
\end{remark}

\subsection{Formal lifting properties for ideal adic completions}
\label{sec:formal-smooth-completion}

We end with a discussion of when a map from a Noetherian ring $R$ to its completion $\widehat{R}$ with respect to an ideal $I$ is (b-nil) formally smooth. Note that while $R \to \widehat{R}$ is always formally \'etale \emph{with respect to the $I\widehat{R}$-adic topology} (see proof of \cite[\href{https://stacks.math.columbia.edu/tag/07ED}{Tag 07ED}]{stacks-project}), it is frequently not formally smooth. For instance, when $R$ is local and $I$ is its maximal ideal, $R \to \widehat{R}$ being formally smooth implies $R$ is a G-ring by Grothendieck's Theorem that formally smooth maps of Noetherian rings are regular \cite[$0_{IV}$, Corollaire~19.6.5, Th\`eor\'eme~19.7.1]{EGAIV_I}. We have furthermore already seen an example of an excellent regular local ring in characteristic $p>0$ whose map to its completion is not formally smooth (\autoref{rem:infinite-field-char-p-not-formally-smooth}). When $\mathbf{Q} \subset R$, it is in fact extremely rare for $R \to \widehat{R}$ to be formally smooth. This is illustrated by the following example.

\begin{example}
\label{rem:very-rare-char0}
 Let $R = \mathbf{Q}[t]_{(t)}$. Then $\Omega_{\widehat{R}/R} \otimes _{\widehat{R}} \mathbf{Q} = 0$ since the module of K\"ahler differentials is compatible with base change and $R \to \widehat{R}$ induces an isomorphism of residue fields. However $\Omega_{\widehat{R}/R}[1/t] = \Omega_{\mathbf{Q}((t))/\mathbf{Q}(t)}$ which is an infinite-dimensional vector space over $\mathbf{Q}((t))$ since $\mathbf{Q}((t))/\mathbf{Q}(t)$ is an extension of fields of characteristic zero with infinite transcendence degree. Therefore, $\Omega_{\widehat{R}/R}$ is not $\widehat{R}$-free and therefore not projective, so $R \to \widehat{R}$ is not formally smooth by \cite[\href{https://stacks.math.columbia.edu/tag/031J}{Tag 031J}]{stacks-project} hence also not b-nil formally smooth.
\end{example}

Nevertheless, we have the following general result.

\begin{proposition}
\label{prop:equivalence-completions-arbitrary}
    Let $R$ be a ring and $I \subset R$ a finitely generated ideal. Let $\widehat{R}$ denote the $I$-adic completion of $R$. 
    \begin{enumerate}
        \item If $R \to \widehat{R}$ is formally smooth, then it is formally \'etale.\label{prop:equivalence-completions-arbitrary.a}

        \item Suppose $R$ is an $\mathbf{F}_p$-algebra. If $R \to \widehat{R}$ is b-nil formally smooth, then it is b-nil formally \'etale.\label{prop:equivalence-completions-arbitrary.b}

        \item \label{prop:equivalence-completions-arbitrary.c} Suppose $R$ is a Noetherian $\mathbf{F}_p$-algebra. Then the following are equivalent:
        \begin{enumerate}[(i)]
        \item $R \to \widehat{R}$ is formally smooth.
        \item $R \to \widehat{R}$ is formally \'etale.
        \item $R \to \widehat{R}$ is b-nil formally \'etale.
        \item $R \to \widehat{R}$ is b-nil formally smooth.
    \end{enumerate}
    \end{enumerate}
\end{proposition}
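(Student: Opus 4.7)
The plan is to prove (1) and (2) in parallel, both relying on the template that the smoothness hypothesis provides a projectivity statement which, combined with $I$-adic completeness of $\widehat{R}$, forces a certain module to vanish. Part (3) will then follow from part (1) together with Theorem~\ref{thm:formally-etale-Noetherian-relative-Frobenius-iso}.

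For part (1), formal smoothness of $R \to \widehat{R}$ makes $\Omega_{\widehat{R}/R}$ a projective $\widehat{R}$-module by \cite[\href{https://stacks.math.columbia.edu/tag/031J}{Tag 031J}]{stacks-project}, hence a direct summand of some free module $F = \widehat{R}^{(S)}$. The key step is to show $\Omega_{\widehat{R}/R} = I\widehat{R}\cdot\Omega_{\widehat{R}/R}$ using the cotangent exact sequence for $R \to \widehat{R} \twoheadrightarrow \widehat{R}/I\widehat{R} = R/I$:
\[I\widehat{R}/(I\widehat{R})^2 \to \Omega_{\widehat{R}/R} \otimes_{\widehat{R}} R/I \to \Omega_{(R/I)/R} = 0.\]
Writing $j \in I\widehat{R}$ as $j = \sum_k s_k i_k$ with $i_k \in I \subset R$ and $s_k \in \widehat{R}$, one has $dj = \sum_k i_k\,ds_k \in I\widehat{R}\cdot\Omega_{\widehat{R}/R}$, so the connecting map is zero modulo $I\widehat{R} \cdot \Omega_{\widehat{R}/R}$; combined with $\Omega_{(R/I)/R} = 0$, this forces $\Omega_{\widehat{R}/R} = I\widehat{R}\cdot\Omega_{\widehat{R}/R}$. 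Iterating, $\Omega_{\widehat{R}/R} = (I\widehat{R})^n \Omega_{\widehat{R}/R}$ for every $n$. Since $\widehat{R}$ is $I\widehat{R}$-adically separated, so is $F$, and therefore $\Omega_{\widehat{R}/R} \subset \bigcap_n (I\widehat{R})^n F = 0$. Hence $R \to \widehat{R}$ is formally unramified and so formally étale.

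For part (2), the argument mirrors (1) with $F_\varphi$ playing the role of the universal derivation. By Proposition~\ref{prop:b-nil-formally-smooth-injective-relative-Frobenius-projective}, b-nil formal smoothness makes $\widehat{R}$ a projective module over $A \coloneqq \widehat{R}^{(p)}$ via $F_\varphi$, so the injection $F_\varphi$ splits as a map of $A$-modules and we can write $\widehat{R} = A \oplus M$ for an $A$-submodule $M \subset \widehat{R}$. Base change compatibility of relative Frobenius gives $F_\varphi \otimes_R R/I = F_{\id_{R/I}}$, which is an isomorphism; tracking identifications, the ideal $IA$ of $A$ is generated by $\{1 \otimes i : i \in I\}$ inside $F_*R \otimes_R \widehat{R}$, the image $F_\varphi(IA)\widehat{R}$ equals $I^{[p]}\widehat{R}$, and the induced map $A/IA \to \widehat{R}/I^{[p]}\widehat{R}$ is the identity on $R/I^{[p]}$. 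The splitting $\widehat{R} = A \oplus M$ then forces $M/(IA \cdot M) = 0$, and iterating gives $M = (IA)^n M \subset (I^{[p]})^n \widehat{R} \subset I^{np}\widehat{R}$ for every $n$. Since $\widehat{R}$ is $I\widehat{R}$-adically separated, $M \subset \bigcap_n I^{np}\widehat{R} = 0$, so $F_\varphi$ is an isomorphism, and Theorem~\ref{thm:relative-Frobenius-iso-b-nil formally-etale} concludes that $R \to \widehat{R}$ is b-nil formally étale.

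Part (3) follows from the cycle (i)~$\Rightarrow$~(ii) by part (1), (ii)~$\Leftrightarrow$~(iii) by Theorem~\ref{thm:formally-etale-Noetherian-relative-Frobenius-iso}, (iii)~$\Rightarrow$~(iv) trivially, and (iv)~$\Rightarrow$~(i) by \autoref{lemma-standard}~\autoref{lem:lemma-standard.1}. The main obstacle I anticipate is the careful bookkeeping in part (2) around the Frobenius twisting of the $R$-algebra structure on $A$: in particular, identifying $IA$ correctly as the ideal generated by $\{1 \otimes i : i \in I\}$ rather than by $\{i \otimes 1 : i \in I\}$, and verifying that the induced quotient map genuinely coincides with the base-changed relative Frobenius of $\id_{R/I}$ under the canonical identification of both $A/IA$ and $\widehat{R}/I^{[p]}\widehat{R}$ with $R/I^{[p]}$.
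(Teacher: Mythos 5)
Your proposal is correct, and it reaches the paper's conclusions by a genuinely different vanishing mechanism in the key steps. The paper kills $\Omega_{\widehat{R}/R}$ in part (1), and $\operatorname{Coker}(F_{\widehat{R}/R})$ in its first proof of part (2), by a Nakayama-type statement: the module is projective, vanishes modulo $I\widehat{R}$ (resp. $I\widehat{R}^{(p)}$), and the ideal lies in the Jacobson radical (\autoref{lem:projective-Jacobson-radical}, resting on Kaplansky's theorem, plus an integrality argument to see $I\widehat{R}^{(p)}$ is in the Jacobson radical of $\widehat{R}^{(p)}$); its second proof of part (2) instead identifies $F_{\widehat{R}/R}$ with a completion map (\autoref{lem:relative-Frobenius-completion}) and applies \autoref{lem:completion-never-split}. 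You replace all of this by a Krull-intersection argument: from $P = I\widehat{R}\cdot P$ you iterate and use that $\widehat{R}$ is $I\widehat{R}$-adically separated because $I$ is finitely generated (Stacks Tag 05GG), embedding the projective module $\Omega_{\widehat{R}/R}$ in a free module in (1), and bounding the complement $M$ of the split image of $F_\varphi$ by $I^{np}\widehat{R}$ in (2). This buys a real simplification in (2): you never need projectivity of the cokernel nor any Jacobson-radical statement for $\widehat{R}^{(p)}$ -- the splitting from \autoref{prop:b-nil-formally-smooth-injective-relative-Frobenius-projective} together with separatedness of $\widehat{R}$ suffices -- whereas the paper's lemmas are more portable, applying to any ideal contained in the Jacobson radical or to any split completion map. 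Your bookkeeping in (2) is right: taking the ideal of $A = \widehat{R}^{(p)}$ generated by $\{1\otimes i\}$, one indeed has $F_\varphi(1\otimes i) = i^p$, $A/IA \cong F_*(R/I^{[p]})$, and $\widehat{R}/I^{[p]}\widehat{R}\cong R/I^{[p]}$ (the latter because $I^m\subseteq I^{[p]}$ for some $m$ and Tag 05GG), with the reduced map the identity; only your slogan ``$F_\varphi\otimes_R R/I = F_{\id_{R/I}}$'' is slightly loose under that convention (with the first-factor ideal $\{i\otimes 1\}$ the reduction is literally $F_{\id_{R/I}}$), but the statements you actually use are the correct ones, and either convention makes the argument go through. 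Your conormal-sequence detour in (1) is equivalent to the paper's one-line base-change computation $\Omega_{\widehat{R}/R}\otimes_R R/I \cong \Omega_{(R/I)/(R/I)} = 0$, and your treatment of part (3) is the same cycle as in the paper, via \autoref{thm:formally-etale-Noetherian-relative-Frobenius-iso} and \autoref{lemma-standard}~\autoref{lem:lemma-standard.1}.
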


In order to prove \autoref{prop:equivalence-completions-arbitrary} we need the following preparatory lemmas.

\begin{lemma}
    \label{lem:relative-Frobenius-completion}
    Let $R$ be a ring of prime characteristic $p > 0$ and $I$ be a finitely generated ideal of $R$. Let $\widehat{R}$ denote the completion of $R$ with respect to $I$. Then the relative Frobenius $F_{\widehat{R}/R}$ can be identified with the canonical map associated with the completion of $\widehat{R}^{(p)} = F_*R \otimes_R \widehat{R}$ with respect to the finitely generated ideal $I\widehat{R}^{(p)} = (F_*I)\widehat{R}^{(p)}$.
\end{lemma}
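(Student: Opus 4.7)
The plan is to compute $\widehat{R}^{(p)}$ modulo powers of $I\widehat{R}^{(p)}$ and identify the resulting inverse limit with $F_*\widehat{R}$ in such a way that the canonical completion map becomes the relative Frobenius.

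First, $I\widehat{R}^{(p)}$ is finitely generated because $I$ is, and the asserted equality $I\widehat{R}^{(p)} = (F_*I)\widehat{R}^{(p)}$ (or, at worst, the fact that these two ideals define the same adic topology) follows from the $R$-balance identity
\[
F_*(r^p)\otimes 1 \;=\; 1 \otimes r \qquad\text{in } F_*R \otimes_R \widehat{R},
\]
combined with $(F_*r \otimes 1)^p = F_*(r^p)\otimes 1$, which shows that the ideals generated by $\{1\otimes r : r\in I\}$ and $\{F_*r \otimes 1 : r \in I\}$ are intertwined by $p$-th powers.

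For each $n\geq 1$, I would compute the quotient via right exactness of $F_*R\otimes_R(-)$ and the fact that $\widehat{R}/I^n\widehat{R} = R/I^n$:
\[
\widehat{R}^{(p)}\big/I^n\widehat{R}^{(p)} \;=\; F_*R \otimes_R \bigl(\widehat{R}/I^n\widehat{R}\bigr) \;=\; F_*R \otimes_R R/I^n \;=\; F_*\bigl(R/(I^n)^{[p]}\bigr),
\]
where the last identification uses that the $R$-module structure on $F_*R$ is via Frobenius, so $I^n\cdot F_*R = F_*((I^n)^{[p]})$ as $F_*R$-ideals.

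Next, since $I$ is finitely generated, say by $k$ elements $f_1,\dots,f_k$, a pigeonhole argument shows that any product of more than $k(np-1)$ elements of $I$ is divisible by some $f_i^{np} = (f_i^n)^p \in (I^n)^{[p]}$, giving the cofinality
\[
I^{k(np-1)+1} \;\subseteq\; (I^n)^{[p]} \;\subseteq\; I^n.
\]
Therefore the chains $\{(I^n)^{[p]}\}_n$ and $\{I^n\}_n$ define the same topology on $R$, and so
\[
\operatorname{lim}_n \widehat{R}^{(p)}\big/I^n\widehat{R}^{(p)} \;=\; \operatorname{lim}_n F_*\bigl(R/(I^n)^{[p]}\bigr) \;=\; F_*\widehat{R}.
\]
Finally, one verifies by evaluating on a pure tensor that the canonical map $\widehat{R}^{(p)} \to F_*\widehat{R}$ obtained by passage to the inverse limit coincides with $F_{\widehat{R}/R}$: writing $s \in \widehat{R}$ as a compatible sequence $(s_n)_n$ with $s_n \in R/I^n$, the image of $F_*r \otimes s$ in $F_*(R/(I^n)^{[p]})$ is $F_*(rs_n^p)$, whose limit in $F_*\widehat{R}$ is $F_*(rs^p) = F_{\widehat{R}/R}(F_*r \otimes s)$.

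The main obstacle I anticipate is the cofinality step: finite generation of $I$ is precisely what allows the pigeonhole bound $I^m \subseteq (I^n)^{[p]}$ and hence the identification of the $(I^n)^{[p]}$-adic topology on $R$ with the $I$-adic topology. The tensor product manipulations are routine once one is careful about which $R$-module structure is in play on $F_*R$, and the verification that the completion map agrees with $F_{\widehat{R}/R}$ is then a direct calculation on generators.
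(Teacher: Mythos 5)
Your argument is correct, but it proceeds by a genuinely different route than the paper's. The paper never computes the higher quotients at all: it observes that $R \to \widehat{R}^{(p)} \xrightarrow{F_{\widehat{R}/R}} \widehat{R}$ factors the $I$-adic completion map of $R$, checks the single isomorphism $R/I \cong \widehat{R}^{(p)}/I\widehat{R}^{(p)}$ (by tensoring $F_*I \to F_*R \to F_*(R/I) \to 0$ with $\widehat{R}$ and using $\widehat{R}/I\widehat{R}\cong R/I$), and then invokes a general fact: if $A \to B \to \widehat{A}^{\frb}$ factors the completion map and $A/\frb \to B/\frb B$ is an isomorphism, then $B \to \widehat{A}^{\frb}$ is the $\frb B$-adic completion map of $B$ (injectivity from the level-wise retractions $A/\frb^n \to B/\frb^n B \to A/\frb^n$, surjectivity from a Stacks lemma). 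You instead compute every quotient explicitly, identifying $\widehat{R}^{(p)}/I^n\widehat{R}^{(p)}$ with $F_*(R/(I^n)^{[p]})$, and then use the pigeonhole cofinality $I^{k(np-1)+1} \subseteq (I^n)^{[p]} \subseteq I^n$ (this is where finite generation of $I$ enters for you, whereas the paper uses it via $\widehat{R}/I^n\widehat{R}\cong R/I^n$) to identify the inverse limit with $F_*\widehat{R}$, finishing with a direct check on pure tensors that the limit map is $F_{\widehat{R}/R}$. Your computation is more explicit and self-contained — it exhibits the completion of $\widehat{R}^{(p)}$ concretely as $F_*\widehat{R}$ and makes the role of the bracket-power filtration visible — while the paper's factorization lemma is slicker, avoids the Frobenius-twisted bookkeeping at higher levels entirely, and is a reusable statement about completions. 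One small point of care in your write-up: with the paper's convention that the $R$-algebra structure on $\widehat{R}^{(p)} = F_*R\otimes_R\widehat{R}$ is through the $F_*R$ factor, the equality $I\widehat{R}^{(p)} = (F_*I)\widehat{R}^{(p)}$ is definitional, whereas your quotients are taken modulo the ideal generated by $1\otimes I^n$ (the extension through the $\widehat{R}$ factor); your opening observation that $(F_*r\otimes 1)^p = 1\otimes r$ intertwines the two ideals, hence they define the same topology, is exactly what is needed to make this harmless, but you should keep the two extensions notationally distinct rather than calling both $I^n\widehat{R}^{(p)}$.
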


\begin{proof}
    Note that the composition $R \to \widehat{R}^{(p)} \xrightarrow{F_{\widehat{R}/R}} \widehat{R}$ is the canonical $I$-adic completion map of $R$. Under the composition, if $J \coloneqq I\widehat{R}^{(p)}$, then $J\widehat{R} = I\widehat{R}$ and the map $R/I \to \widehat{R}^{(p)}/I\widehat{R}^{(p)}$ is an isomorphism. This is easiest to see using the $F_*$ notation. Indeed, the exact sequence $F_*I \to F_*R \to F_*(R/I) \to 0$ then gives us upon tensoring by $\otimes_R \widehat{R}$ that $\widehat{R}^{(p)}/I\widehat{R}^{(p)} \cong F_*(R/I) \otimes_R \widehat{R} \cong F_*(R/I) \otimes_{R/I} \widehat{R}/I\widehat{R} \cong F_*(R/I)$, since $\widehat{R}/I\widehat{R} \cong R/I$ by the fact that $I$ is finitely generated (\cite[\href{https://stacks.math.columbia.edu/tag/05GG}{Tag 05GG}]{stacks-project}). Thus, the lemma follows by the following more general fact: Let $\ba$ be a finitely generated ideal of a ring $A$ and $\widehat{A}^\ba$ denote the $\ba$-adic completion. If we have a factorization $A \to B \to \widehat{A}^\ba$ of the canonical completion map $A \to \widehat{A}^\ba$ and $A/\ba \to B/\ba B$ is an isomorphism, then $B \to \widehat{A}^\ba$ can be identified with $B \to \widehat{B}^{\ba B}$. Indeed, we obtain maps
    $$
    A/\ba ^n \to B/\ba^nB \to \widehat{A}^\ba/\ba^n \widehat{A}^\ba \cong  A/\ba ^n
    $$
    for all $n$ (the last isomorphism follows because $\ba$ is finitely generated) which compose to the identity and are compatible with the projections $A/\ba^{n+1} \twoheadrightarrow A/\ba^n$ and $B/\ba^{n+1}B \twoheadrightarrow B/\ba^nB$, and hence an injective map 
    $$\widehat{A}^\ba = \operatorname{lim}_n A/\ba ^n \to \operatorname{lim}_n B/\ba^n B = \widehat{B}^{\ba B}.$$
    It is also surjective by \cite[\href{https://stacks.math.columbia.edu/tag/0315}{Tag 0315}]{stacks-project} since $A/\ba \to B/\ba B$ is surjective, hence $\widehat{A}^\ba \to \widehat{B}^{\ba B}$ is an isomorphism such that the composition $B \to \widehat{A}^{\ba} \xrightarrow{\simeq} \widehat{B}^{\ba B}$ is the canonical completion map.
\end{proof}

\begin{lemma}
\label{lem:projective-Jacobson-radical}
    Let $R$ be a ring. Let $I \subset R$ be an ideal contained in the Jacobson radical of $R$. Let $P$ be a projective $R$-module. If $P/IP = 0$, then $P = 0$.
\end{lemma}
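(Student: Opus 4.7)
The plan is to localize at maximal ideals of $R$ and reduce to an almost trivial fact about free modules over local rings.

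First I would observe that it suffices to show $P_{\m} = 0$ for every maximal ideal $\m$ of $R$: given $p \in P$, if $p/1 = 0$ in $P_{\m}$ for every such $\m$, then $\Ann(p)$ is not contained in any maximal ideal, hence equals $R$, forcing $p = 0$.

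Next I would fix a maximal ideal $\m$ and use $I \subset \Jac(R) \subset \m$ to deduce $IP_{\m} \subset \m R_{\m} \cdot P_{\m}$; localizing the hypothesis $P = IP$ then yields $P_{\m} = \m R_{\m} \cdot P_{\m}$. The key input is Kaplansky's theorem that every projective module over a local ring is free (\cite[\href{https://stacks.math.columbia.edu/tag/0593}{Tag 0593}]{stacks-project}), which applies because $P_{\m}$ is projective over the local ring $R_{\m}$.

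To finish, I would invoke the elementary observation that a nonzero free module $F$ over a local ring $(R_{\m}, \m R_{\m})$ cannot satisfy $F = \m R_{\m} \cdot F$: expanding any basis element $e_{\alpha}$ as a finite $\m R_{\m}$-linear combination of basis elements and comparing the coefficient of $e_{\alpha}$ on both sides produces $1 \in \m R_{\m}$, contradicting its properness. Therefore $P_{\m} = 0$ for every maximal $\m$, and we conclude $P = 0$.

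The only non-formal ingredient is Kaplansky's theorem, which removes the need for any finite generation assumption on $P$. Without it one would have to handle arbitrarily large projective modules satisfying $P = IP$ by hand (e.g.\ reducing to the countably generated case via Kaplansky's decomposition theorem and then arguing with a row-finite matrix $I - C$ whose entries lie in $I$), but with it the argument is essentially formal.
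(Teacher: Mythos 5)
Your proof is correct and follows essentially the same route as the paper: localize at each maximal ideal $\m$, invoke Kaplansky's theorem to get that $P_\m$ is free over $R_\m$, and then use $I \subset \m$ together with $P = IP$ to force $P_\m = 0$. The only cosmetic difference is that the paper phrases the last step as $P_\m \otimes_R R/\m = 0$ via base change through $R/I$, while you argue directly that a nonzero free module over a local ring cannot equal $\m R_\m$ times itself; these are the same observation.
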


\begin{proof}
    Let $\mathfrak{m} \subset R$ be a maximal ideal. Then $P_\mathfrak{m}$ is a projective module over the local ring $R_{\mathfrak{m}}$ hence free by Kaplansky's Theorem. In fact, $P_{\mathfrak{m}} = 0$ since it is free and 
    $$
    P_{\mathfrak{m}} \otimes _R R/\mathfrak{m} = P_\m \otimes _R R/I \otimes _{R/I} R / \mathfrak{m} = 0
    $$
    since $I \subset \mathfrak{m}$ and $P/IP = 0$. Since $P_{\mathfrak{m}} = 0$ for every maximal ideal of $R$, we have $P = 0$ as needed. 
\end{proof}

\begin{proof}[Proof of \autoref{prop:equivalence-completions-arbitrary}]
    The equivalence of the assertions of \autoref{prop:equivalence-completions-arbitrary.c} for Noetherian $\mathbf{F}_p$-algebras follows from \autoref{prop:equivalence-completions-arbitrary.a} and \autoref{thm:formally-etale-Noetherian-relative-Frobenius-iso}. 

    We next show \autoref{prop:equivalence-completions-arbitrary.a}. Let $R$ is an arbitrary ring and $I \subset R$ is a finitely generated ideal. Then $I\widehat{R}$ is contained in the Jacobson radical of $\widehat{R}$ by \cite[\href{https://stacks.math.columbia.edu/tag/05GI}{Tag 05GI}]{stacks-project} (this does not need $I$ finitely generated). Since $I$ is finitely generated, we have that the induced map $R/I \to \widehat{R}/I\widehat{R}$ is an isomorphism by \cite[\href{https://stacks.math.columbia.edu/tag/05GG}{Tag 05GG}]{stacks-project}.
    Let $R \to \widehat{R}$ be formally smooth.
    It suffices to show $R \to \widehat{R}$ is formally unramified, that is, $\Omega_{\widehat{R}/R} = 0$. Since $R \to \widehat{R}$ is formally smooth, $\Omega_{\widehat{R}/R}$ is a projective $\widehat{R}$-module  \cite[\href{https://stacks.math.columbia.edu/tag/031J}{Tag 031J}]{stacks-project} which satisfies
    $$
    \Omega_{\widehat{R}/R} \otimes_{\widehat{R}} \widehat{R}/I\widehat{R} \cong \Omega_{\widehat{R}/R}/I\Omega_{\widehat{R}/R} \cong \Omega_{\widehat{R}/R} \otimes_{R }R/I =\Omega_{(\widehat{R}/I\widehat{R} ) / (R/I)} = 0,
    $$
    where the penultimate equality follows by the compatibility of K\"ahler differentials with base change. We then have $\Omega_{\widehat{R}/R} = 0$ by \autoref{lem:projective-Jacobson-radical}.

    We give two proofs of \autoref{prop:equivalence-completions-arbitrary.b}. For the first proof, consider the exact sequence
    $$
    0 \to \widehat{R}^{(p)} \xrightarrow{F_{\widehat{R}/R}} \widehat{R} \to \operatorname{Coker}(F_{\widehat{R}/R}) \to 0 
    $$
    of $\widehat{R}^{(p)}$-modules. Then $\widehat{R}$ is a projective $\widehat{R}^{(p)}$-module by Theorem \ref{thm:b-nil-formally-smooth-fp-relFrob} and $F_{\varphi}$ splits by \autoref{lem:projective-implies-split}. Thus $\operatorname{Coker}(F_{\widehat{R}/R})$ is also a projective $\widehat{R}^{(p)}$-module. Furthermore, since relative Frobenius is compatible with base change, we have 
    $$
    \operatorname{Coker}(F_{\widehat{R}/R}) \otimes _{R} R/I = \operatorname{Coker}(F_{(\widehat{R}/I\widehat{R})/(R/I)}) = 0
    $$
    since $R/I \to \widehat{R}/I\widehat{R}$ is an isomorphism. Now we conclude again by \autoref{lem:projective-Jacobson-radical} and the fact that $I \widehat{R}^{(p)}$ is contained in the Jacobson radical of $\widehat{R}^{(p)}$: There is a commutative square of ring maps
\[\begin{tikzcd}
	{\widehat{R}} & {\widehat{R}/I\widehat{R}}\\
	{\widehat{R}^{(p)}} & {\widehat{R}^{(p)}/I\widehat{R}^{(p)}} 
	\arrow[from=1-1, to=1-2]
	\arrow[from=2-1, to=1-1]
	\arrow[from=2-1, to=2-2]
	\arrow[from=2-2, to=1-2]
\end{tikzcd}\]
where the vertical arrows are integral and induce surjections (even homeomorphisms) on spectra. Since $I \widehat{R}$ is contained in the Jacobson radical of $\widehat{R}$ it follows that $I\widehat{R}^{(p)}$ is contained in the Jacobson radical of $\widehat{R}^{(p)}$ because for an integral map that is surjective on spectra, the Jacobson radical contracts to the Jacobson radical.

    Second proof of \autoref{prop:equivalence-completions-arbitrary.b}: Suppose $R \to \widehat{R}$ is b-nil formally smooth. Then $F_{\widehat{R}/R}$ splits as a map of $\widehat{R}^{(p)}$-modules by \autoref{prop:b-nil-formally-smooth-injective-relative-Frobenius-projective}. Since $F_{\widehat{R}/R}$ can also be identified as the canonical map associated with the completion of $\widehat{R}^{(p)}$ with respect to the finitely generated ideal $I\widehat{R}^{(p)}$ (\autoref{lem:relative-Frobenius-completion}), then $F_{\widehat{R}/R}$ is an isomorphism, and hence $R \to \widehat{R}$ is b-nil formally \'etale, by the following lemma.
\end{proof}

\begin{lemma}
    \label{lem:completion-never-split}
    Let $R$ be a ring and $I$ be a finitely generated ideal such that the canonical map $i \colon R \to \widehat{R}$ to the $I$-adic completion $\widehat{R}$ has an $R$-linear left-inverse. Then $i$ is an isomorphism.
\end{lemma}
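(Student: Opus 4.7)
The plan is to write $\widehat{R} = i(R) \oplus K$ as $R$-modules, where $K \coloneqq \ker(\pi)$ and $\pi \colon \widehat{R} \to R$ is the given $R$-linear left-inverse of $i$, and then to show $K = 0$, which will force $i$ to be an isomorphism. The argument relies on two classical consequences of $I$ being finitely generated, both recorded in \cite[\href{https://stacks.math.columbia.edu/tag/05GG}{Tag 05GG}]{stacks-project}: (a) the canonical map $R/I \to \widehat{R}/I\widehat{R}$ is an isomorphism; and (b) $\widehat{R}$ is $I$-adically separated, i.e.\ $\bigcap_{n \geq 0} I^n \widehat{R} = 0$.

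First I would observe that, because multiplication by any element of $R$ on $\widehat{R}$ is $R$-linear, the decomposition $\widehat{R} = i(R) \oplus K$ is preserved by the $R$-action, and consequently
\[ I\widehat{R} \;=\; i(I) \,\oplus\, IK \]
as $R$-submodules of $\widehat{R}$. Reducing modulo $I\widehat{R}$ then yields an $R$-module decomposition
\[ \widehat{R}/I\widehat{R} \;\cong\; R/I \,\oplus\, K/IK. \]
Comparing with (a) forces $K/IK = 0$, and iterating gives $K = I^n K$ for every $n \geq 0$.

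Next, since $I^n K \subset I^n \widehat{R}$ inside $\widehat{R}$, we conclude that $K \subset \bigcap_{n \geq 0} I^n \widehat{R}$, which vanishes by (b). Therefore $K = 0$ and $i$ is an isomorphism. The only point that requires a moment's thought is the identification $I\widehat{R} = i(I) \oplus IK$; everything else is a direct application of the two facts above once this decomposition is in hand. I expect no genuine obstacle — the lemma is essentially a Nakayama-type argument tailored to the fact that an $I$-adic completion map along a finitely generated ideal cannot split off a nonzero $R$-linear complement.
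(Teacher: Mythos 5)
Your proof is correct and is essentially the paper's argument in different packaging: your complement $K = \ker(\pi)$ is exactly the image of the defect map $\id_{\widehat{R}} - i \circ \pi$ that the paper shows vanishes, and both arguments rest on the same two consequences of the finite generation of $I$ (Stacks Tag 05GG), namely that $\widehat{R} = i(R) + I^n\widehat{R}$ for all $n$ (you obtain this by iterating $K = IK$, the paper by using $R/I^n \cong \widehat{R}/I^n\widehat{R}$ directly) together with $I$-adic separatedness of the completion. So this is the same approach, correctly executed.
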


\begin{proof}
    The argument essentially appears in \cite[Theorem~6.1.3]{DattaMurayamaFsolidity} but with the hypothesis that $R$ is Noetherian. So we include a proof for the reader's convenience. Let $\varphi \colon \widehat{R} \to R$ be an $R$-linear left inverse of $i$. It is enough to show $i \circ \varphi = \id_{\widehat{R}}$. We have $(i \circ \varphi - \id_{\widehat{R}}) \circ i = 0$. 
    Now consider any $f \in \Hom_R(\widehat{R},R)$ such that $f \circ i = 0$. The result follows if we can show that $f = 0$.
    Since $I$ is finitely generated, for all $n \geq 0$, the map $i \otimes_R \id_{R/I^n} \colon R/I^n \to \widehat{R}/I^n\widehat{R}$ is an isomorphism by \cite[\href{https://stacks.math.columbia.edu/tag/05GG}{Tag 05GG}]{stacks-project}. Thus, for all $n \geq 0$, $\widehat{R} = \im(i) + I^n\widehat{R}$. Let $x \in \widehat{R}$ and for all $n \geq 0$ choose $a_n \in R$ and $x_n \in I^n\widehat{R}$ such that $x = i(a_n) + x_n$. Then $f(x) = f(i(a_n) + x_n) = f(x_n) \in I^n$ by the linearity of $f$. Thus, $f(x) \in \bigcap_{n \geq 0} I^n = \ker(i) = 0$, and so, $f = 0$.
\end{proof}

\begin{remark}
    Let $R$ be a non-Noetherian ring. If we relax the assumption for the ideal $I$ to be finitely generated, then $R \to \widehat{R}$ can be formally \'etale without being b-nil formally \'etale. Indeed, consider the example of the $\mathbf{F}_p$-algebra $R$ of \autoref{eg:formally-etale-not-b-nil formally-etale}~\autoref{eg:formally-etale-not-b-nil formally-etale.b} with an ideal $I$ such that $I = I^2$ and $I \neq I^{[p]}$. Then $\widehat{R} = R/I$ and one can identify $R \to \widehat{R}$ with the canonical projection $R \twoheadrightarrow R/I$, which as we have seen is formally \'etale but not b-nil formally \'etale.  Note that such an ideal $I$ will necessarily be non-finitely generated as otherwise the condition $I = I^2$ would imply that $R$ is principally generated by an idempotent of $R$ and consequently that $I = I^{[p]}$. Thus, it is conceivable that \autoref{prop:equivalence-completions-arbitrary}~\autoref{prop:equivalence-completions-arbitrary.c} holds even without the hypothesis that $R$ is Noetherian. We do not know if this is true.
\end{remark}

We have seen several examples of the form where the completion map $R \to \widehat{R}$ is regular but not formally smooth. One of them was the completion map $k[t]_{(t)} \to k[[t]]$ for a non-$F$-finite field $k$ of characteristic $p$ (\autoref{rem:infinite-field-char-p-not-formally-smooth}). We next show that the non-$F$-finite field assumption is essential. Related observations were made by the first author and Remy van Dobben de Bruyn and also by  the first author and Linquan Ma for completions of Noetherian local rings with respect to their maximal ideals (both works are unpublished). See also \autoref{rem:following-prop-completion-b-nil-formally-etale} for related results obtained by Seydi \cite{SeydiExcellentII}.

\begin{proposition}
\label{prop:completionforF-finiteGring}
    Let $R$ be a Noetherian ring of prime characteristic $p > 0$. Let $I$ be an ideal of $R$ such that $R/I$ is $F$-finite. Let $\widehat{R}$ denote the $I$-adic completion of $R$. Then we have the following:
    \begin{enumerate}
    \item $\widehat{R}$ is $F$-finite.\label{prop:completionforF-finiteGring.a0}
    \item $F_{\widehat{R}/R}$ is surjective, and hence, $R \to \widehat{R}$ is b-nil formally unramified.\label{prop:completionforF-finiteGring.a}
    \item \label{prop:completionforF-finiteGring.b} The following are equivalent:
        \begin{enumerate}[(i)]
        \item $R \to \widehat{R}$ is a regular map.\label{prop:completionforF-finiteGring.ii}
        \item $R \to \widehat{R}$ is b-nil formally \'etale.\label{prop:completionforF-finiteGring.iii}
        \item $R \to \widehat{R}$ is formally \'etale.\label{prop:completionforF-finiteGring.iv}
        \item $R \to \widehat{R}$ is formally smooth.\label{prop:completionforF-finiteGring.v}
        \item $R \to \widehat{R}$ is b-nil formally smooth.\label{prop:completionforF-finiteGring.vi}
        \item $F_{\widehat{R}/R}$ is injective.\label{prop:completionforF-finiteGring.vii}
        \end{enumerate}
    \end{enumerate}
    If $I$ is contained in the Jacobson radical of $R$, then \autoref{prop:completionforF-finiteGring.ii}-\autoref{prop:completionforF-finiteGring.vii} are equivalent to $R$ being $F$-finite.
\end{proposition}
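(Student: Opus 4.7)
The plan is to establish parts (1) and (2) simultaneously by a Nakayama-type argument for complete modules, then close a loop of implications for (3) using results already in the paper, and finally handle the $F$-finiteness claim by faithfully flat descent.

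For (1) and (2), I would first observe that $R/I^{[p]}$ is itself $F$-finite: the surjection $R/I^{[p]} \twoheadrightarrow R/I$ has kernel $I/I^{[p]}$, which is a finitely generated $R/I$-module (spanned by the degree-$<p$ monomials in any finite generating set of $I$), so $R/I^{[p]}$ is a finite extension of the $F$-finite ring $R/I$. Choose $y_1, \dots, y_n \in R$ whose images generate $F_*(R/I^{[p]})$ as an $R/I^{[p]}$-module, and set $M = \sum_i \widehat{R}\, F_*y_i \subset F_*\widehat{R}$. Since $I$ is finitely generated, $\widehat{R}/I^{[p]}\widehat{R} \cong R/I^{[p]}$, and the natural isomorphism $F_*\widehat{R}/(I\widehat{R}) F_*\widehat{R} \cong F_*(R/I^{[p]})$ shows $M + (I\widehat{R}) F_*\widehat{R} = F_*\widehat{R}$; iteration yields $M + (I\widehat{R})^k F_*\widehat{R} = F_*\widehat{R}$ for all $k$. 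Because $(I\widehat{R})^p \subset (I\widehat{R})^{[p]}$, the $\widehat{R}$-submodule filtration $\{(I\widehat{R})^k F_*\widehat{R}\}$ on $F_*\widehat{R}$ is cofinal with the $I\widehat{R}$-adic filtration on the underlying complete separated abelian group $\widehat{R}$. A standard successive-approximation argument then produces, for any $x \in F_*\widehat{R}$, convergent sums of coefficients $r_i \in \widehat{R}$ with $x = \sum_i r_i F_*y_i$, giving (1). For (2), the image of $F_{\widehat{R}/R}\colon \widehat{R}^{(p)} \to F_*\widehat{R}$ is an $\widehat{R}$-submodule containing $F_*R$ (via $F_*r \otimes 1 \mapsto F_*r$), hence containing $M = F_*\widehat{R}$; surjectivity combined with \autoref{rem:surjective-rel-Frobenius-b-nil-formally-unramified} yields b-nil formal unramifiedness.

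For (3), since $F_{\widehat{R}/R}$ is surjective by (2), it is an isomorphism if and only if it is injective, and in particular if and only if it is faithfully flat. I would close the loop \emph{regular} $\Rightarrow$ $F_{\widehat{R}/R}$ \emph{faithfully flat} (Radu--Andr\'e, \autoref{thm:Radu-Andre}~\autoref{thm:Radu-Andre.a}) $\Rightarrow$ $F_{\widehat{R}/R}$ \emph{an isomorphism} (combining with (2)) $\Rightarrow$ \emph{b-nil formally \'etale} (\autoref{thm:relative-Frobenius-iso-b-nil formally-etale}) $\Rightarrow$ \emph{formally \'etale} $\Rightarrow$ \emph{formally smooth} $\Rightarrow$ \emph{regular} (Grothendieck, \autoref{thm:Radu-Andre}~\autoref{thm:Radu-Andre.c}). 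The remaining condition \emph{b-nil formally smooth} slots in between b-nil formally \'etale (trivially) and $F_{\widehat{R}/R}$ injective (by \autoref{prop:b-nil-formally-smooth-injective-relative-Frobenius-projective}), completing the equivalence of all six conditions.

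For the final claim, assume $I \subset \Jac(R)$, so $R \to \widehat{R}$ is faithfully flat. If $R$ is $F$-finite then $F_*R$ is finitely generated over $R$, so $F_*R \otimes_R \widehat{R}$ equals the $I$-adic completion of $F_*R$, which coincides with $F_*\widehat{R}$ (using that the $I$-adic and $I^{[p]}$-adic topologies on $R$ agree for Noetherian $R$); this makes $F_{\widehat{R}/R}$ an isomorphism. Conversely, if $F_{\widehat{R}/R}$ is an isomorphism then $F_*R \otimes_R \widehat{R} \cong F_*\widehat{R}$ is finitely generated over $\widehat{R}$; writing finitely many generators as sums of pure tensors involves only finitely many elements $r_1, \dots, r_m \in F_*R$, whose $R$-span $N'$ satisfies $(F_*R/N') \otimes_R \widehat{R} = 0$, so $F_*R = N'$ by faithful flatness, and $R$ is $F$-finite. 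The hard part throughout is the Nakayama-for-complete-modules maneuver in (1), where the cofinality of filtrations on $F_*\widehat{R}$ and the convergence of the successive approximations require care; once that is in place, everything else is an assembly of results already established in the paper.
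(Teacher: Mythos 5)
Your proposal is correct in substance and follows the same overall architecture as the paper's proof: part (1) is reduced to the finite generation of $F_*\widehat{R}/I F_*\widehat{R}\cong F_*(R/I^{[p]})$ over $R/I$ and concluded by a complete Nakayama argument (the paper cites \cite[Tag 031D]{stacks-project}, you carry out the successive approximation by hand), part (2) is deduced from (1), and part (3) together with the Jacobson-radical addendum is assembled from \autoref{thm:Radu-Andre}, \autoref{thm:relative-Frobenius-iso-b-nil formally-etale}, \autoref{prop:b-nil-formally-smooth-injective-relative-Frobenius-projective} and faithfully flat descent. Some of your local routes genuinely differ and are, if anything, leaner: for surjectivity of $F_{\widehat{R}/R}$ you simply note that its image is an $\widehat{R}$-submodule of $F_*\widehat{R}$ containing the generators $F_*y_i$, whereas the paper identifies $F_{\widehat{R}/R}$ with a completion map (\autoref{lem:relative-Frobenius-completion}) and quotes the fact that a finite completion map along a finitely generated ideal is surjective; for (3) you close a single loop through Grothendieck's theorem (formally smooth $\Rightarrow$ regular) instead of invoking \autoref{prop:equivalence-completions-arbitrary}; and you prove both directions of the final equivalence explicitly, whereas the paper only writes out the implication from (i) to $F$-finiteness of $R$.

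Two slips should be repaired, though neither is fatal. First, your justification that $R/I^{[p]}$ is $F$-finite is wrong as stated: $I/I^{[p]}$ is not an $R/I$-module in general (for $I=(x,y)$ one has $xy\in I^2$ but $xy\notin(x^p,y^p)$), and ``$R/I^{[p]}$ is a finite extension of $R/I$'' is not meaningful since $R/I$ does not map to $R/I^{[p]}$. What you actually need is that $F$-finiteness of a Noetherian ring ascends along nilpotent thickenings (equivalently, depends only on the reduction); this is standard --- filter by powers of the nilpotent kernel and use that $F_*$ of a finite module over the quotient is finite over $F_*$ of the quotient --- and is precisely the fact the paper invokes from \cite[Exercise~4]{MaPolstraFSing}. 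Second, the containment ``$(I\widehat{R})^p\subset(I\widehat{R})^{[p]}$'' is backwards: in general one has $(I\widehat{R})^{[p]}\subseteq(I\widehat{R})^p$, while an inclusion of an ordinary power into $(I\widehat{R})^{[p]}$ requires the pigeonhole bound $(I\widehat{R})^{n(p-1)+1}\subseteq(I\widehat{R})^{[p]}$ for an ideal with $n$ generators. Fortunately your successive-approximation argument only needs the true (and trivial) inclusion, since $(I\widehat{R})^kF_*\widehat{R}=F_*\bigl(((I\widehat{R})^{[p]})^k\,\widehat{R}\bigr)\subseteq F_*\bigl((I\widehat{R})^{pk}\bigr)$: this is what makes the error terms shrink to zero in the complete and separated $I\widehat{R}$-adic topology of $\widehat{R}$, while convergence of the coefficient sequences already follows from completeness of $\widehat{R}$ with respect to the ordinary powers of $I\widehat{R}$. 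With these corrections the argument stands.
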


Thus, \autoref{prop:completionforF-finiteGring} implies that if $k$ is an $F$-finite field, then $k[t]_{(t)} \to k\llbracket t\rrbracket$ is b-nil formally \'etale, unlike \autoref{eg:regular-not-bil-nil-smooth}.

\begin{proof}[Proof of \autoref{prop:completionforF-finiteGring}]
\autoref{prop:completionforF-finiteGring.a0} is well-known but for lack of a precise reference we include a proof here. Since the ideals $I^{[p^e]}$ for $e \geq 0$ are co-final with the ordinary powers of $I$, the $I$-adic completion, $\widehat{F_*R}$, of $F_*R$ coincides with $F_*\widehat{R}$. Thus, in order to show $F_*\widehat{R}$ is a finitely generated $\widehat{R}$-module, it suffices to show by \cite[\href{https://stacks.math.columbia.edu/tag/031D}{Tag 031D}]{stacks-project} that $F_*\widehat{R}/IF_*\widehat{R} \cong \widehat{F_*R}/I\widehat{F_*R} \cong F_*R/IF_*R \cong F_*(R/I^{[p]})$ is a finitely generated $\widehat{R}/I\widehat{R} \cong R/I$-module. Here the isomorphisms follow by \cite[\href{https://stacks.math.columbia.edu/tag/05GG}{Tag 05GG}]{stacks-project} because $I$ is a finitely generated ideal. Recall that a prime characteristic Noetherian ring $A$ is $F$-finite if and only if $A_{\red}$ is $F$-finite (for example, see \cite[Exercise~4]{MaPolstraFSing}). Thus, $F$-finiteness of $R/I$ implies $(R/I)_{\red} = (R/I^{[p]})_{\red}$ is $F$-finite, and so, $R/I^{[p]}$ is $F$-finite. The absolute Frobenius of $R/I^{[p]}$ is the composition $R/I^{[p]} \twoheadrightarrow R/I \to F_*(R/I^{[p]})$, where the first map is the canonical projection and the second one sends $R/I \ni r+ I \mapsto r^p + I^{[p]} \in F_*(R/I^{[p]})$. Thus, $F_*(R/I^{[p]})$ is a finitely generated $R/I$-module. 

\autoref{prop:completionforF-finiteGring.a} We have shown in the proof of \autoref{lem:relative-Frobenius-completion} that one can identify $F_{\widehat{R}/R}$ with the canonical map associated with the completion of $\widehat{R}^{(p)}$ with respect to the finitely generated ideal $I\widehat{R}^{(p)}$. By \autoref{prop:completionforF-finiteGring.a0} we see $\widehat{R}$ is $F$-finite. Thus, $F_{\widehat{R}/R}$ is a finite map. Then $F_{\widehat{R}/R}$ is surjective by \cite[Remark~6.1.5~(2)]{DattaMurayamaFsolidity} and the proof of \cite[Lemma~6.1.4~(iii)]{DattaMurayamaFsolidity} which show that if $A$ is \emph{any} ring and $\ba$ is a finitely generated ideal of $A$, then the canonical map $A \to \widehat{A}^{\ba}$ is finite if and only if it is surjective.


We now prove the equivalent assertions of \autoref{prop:completionforF-finiteGring.b}. The equivalence of \autoref{prop:completionforF-finiteGring.iii}, \autoref{prop:completionforF-finiteGring.iv}, \autoref{prop:completionforF-finiteGring.v} and \autoref{prop:completionforF-finiteGring.vi} follows by \autoref{prop:equivalence-completions-arbitrary} because $R$ is Noetherian. Clearly, \autoref{prop:completionforF-finiteGring.iii}$\implies$\autoref{prop:completionforF-finiteGring.vii} by \autoref{thm:relative-Frobenius-iso-b-nil formally-etale}. Also, \autoref{prop:completionforF-finiteGring.ii}$\implies$\autoref{prop:completionforF-finiteGring.iii} because $F_{\widehat{R}/R}$ is faithfully flat and hence injective by \autoref{thm:Radu-Andre}~\autoref{thm:Radu-Andre.a}, hence an isomorphism by \autoref{prop:completionforF-finiteGring.a}. It remains to show \autoref{prop:completionforF-finiteGring.vii}$\implies$\autoref{prop:completionforF-finiteGring.ii}. Again, $F_{\widehat{R}/R}$ is an isomorphism by \autoref{prop:completionforF-finiteGring.a} and hence $R \to \widehat{R}^I$ is regular by \autoref{thm:Radu-Andre}~\autoref{thm:Radu-Andre.a}.

Finally, suppose $I$ is contained in the Jacobson radical of $R$. Then $R \to \widehat{R}^I$ is faithfully flat. Now, $\widehat{R}$ is $F$-finite by \autoref{prop:completionforF-finiteGring.a0}. Let us assume \autoref{prop:completionforF-finiteGring.ii}. Then $\widehat{R}^{(p)}$ is a $\widehat{R}$-submodule (by faithful flatness of $F_{\widehat{R}/R}$) of the finitely generated $\widehat{R}$-module $F_*\widehat{R}$. Thus, $R \to \widehat{R}^{(p)}$ is finite since $\widehat{R}$ is Noetherian. But this finite map is the base change of the absolute Frobenius $F_R$ along the faithfully flat map $R \to \widehat{R}$, so $F_R$ is also finite by descent \cite[\href{https://stacks.math.columbia.edu/tag/08XD}{Tag 08XD}]{stacks-project}.
\end{proof}

\begin{remarks}
\label{rem:following-prop-completion-b-nil-formally-etale}
{\*}
\begin{enumerate}
    \item If $I$ is a finitely generated ideal of a non-Noetherian ring $R$, then as long as we assume that $F_*(R/I^{[p]})$ is a finitely generated $R/I$-module (this is stronger than assuming $R/I$ is $F$-finite) the conclusions of \autoref{prop:completionforF-finiteGring}~\autoref{prop:completionforF-finiteGring.a0} and \autoref{prop:completionforF-finiteGring}~\autoref{prop:completionforF-finiteGring.a} will continue to hold.
    
    \item Non-excellent/non-$F$-finite regular local rings with $F$-finite residue fields exist even in the function field of $\mathbf{P}^2_{\mathbf{F}_p}$. See for instance \cite{DattaSmithExcellence}. For any such ring $(R,\m)$, if $\widehat{R}$ is the $\m$-adic completion, then $F_{\widehat{R}/R}$ is surjective by \autoref{prop:completionforF-finiteGring}~\autoref{prop:completionforF-finiteGring.a} but not injective by \autoref{prop:completionforF-finiteGring}~\autoref{prop:completionforF-finiteGring.b}. \label{rem:non-excellent-F-finite-residuefield}

    \item Suppose $R$ is Noetherian local and $I$ is the maximal ideal of $R$. One can use \cite[Proposition~3.3]{SeydiExcellentII} to also deduce parts of \autoref{prop:completionforF-finiteGring}~\autoref{prop:completionforF-finiteGring.b}. However, Seydi's argument in the local case is quite different than our proof of \autoref{prop:completionforF-finiteGring}.
    

    \item Let $(R,\m,\kappa)$ be a Noetherian local ring. Although $R \to \widehat{R}$ is formally \'etale in many geometric situations in prime characteristic (\autoref{prop:completionforF-finiteGring}), this map is rarely weakly \'etale. Indeed, let $(R^h,\m^h)$ denote the Henselization of $(R,\m)$.  Note, $\m^h = \m R^h$. We have canonical faithfully flat local maps $R \to R^h$ and $R^h \to \widehat{R}$ such that the composition $R \to R^h \to \widehat{R}$ is the canonical map associated with completion (this follows by the universal property of Henselization and the fact that $\widehat{R}$ is Henselian). Since $(R^h,\m^h)$ is a Henselian pair, if $R \to \widehat{R}$ is weakly \'etale, then by \cite[Theorem~1]{deJongOlander}, there exists a unique map $\widehat{R} \to R^h$ such that the following diagram commutes:
    \[\begin{tikzcd}[cramped]
	\kappa & {\widehat{R}} \\
	{R^h} & R
	\arrow[two heads, from=1-2, to=1-1]
	\arrow["{\exists!}"{description}, color={rgb,255:red,214;green,92;blue,92}, dashed, from=1-2, to=2-1]
	\arrow[two heads, from=2-1, to=1-1]
	\arrow[from=2-2, to=1-2]
	\arrow[from=2-2, to=2-1].
\end{tikzcd}\]
Using the universal property of Henselizations and completions, it follows that $\widehat{R} \to R^h$ is the inverse of $R^h \to \widehat{R}$, that is, $R^h \to \widehat{R}$ is an isomorphism. But $R^h$ is rarely isomorphic to $\widehat{R}$ for a non-complete Noetherian local ring $R$. For example, take $R = \mathbf{F}_p[t]_{(t)}$. Then $\widehat{R} = \mathbf{F}_p\llbracket t \rrbracket$ is uncountable. However, $R^h$ is countable (its fraction field is an algebraic extension of $F_p(t)$, which is countable), so it cannot be isomorphic to $\widehat{R}$. Note in this case $\mathbf{F}_p[t]_{(t)} \to \mathbf{F}_p\llbracket t \rrbracket$ is b-nil formally \'etale by \autoref{prop:completionforF-finiteGring}.\label{rem:completion-weakly-etale}

\item There is often a misconception that ind-smooth implies formally smooth, probably because ind-formally \'etale implies formally \'etale. This fact is false as shown by \autoref{rem:very-rare-char0} and \autoref{rem:infinite-field-char-p-not-formally-smooth}. These remarks show that ind-smooth maps fail to formally smooth even for the completion map of excellent DVRs. Note that the completion map of an excellent local ring is regular and hence ind-smooth by N\'eron-Popescu desingularization.
\end{enumerate}
\end{remarks}

\section{Acknowledgements}
\autoref{thm:formally-etale-F-finite} was inspired by a conversation with Bhargav Bhatt. We also thank him for comments on a draft, for discussions related to \autoref{eg:formally-etale-not-b-nil formally-etale} and for explaining to us \autoref{rem:animated-relative-Frobenius}. The first author also thanks Benjamin Antieau and Linquan Ma for conversations over the years on the absolute/relative Frobenius. He especially thanks Linquan Ma for discussions related to \autoref{prop:completionforF-finiteGring}. The second author thanks Benjamin Antieau, Matthew Morrow, and other participants in the 2019 Arizona Winter School for conversations about variants of formal smoothness. Additionally, we thank Benjamin Antieau and Aise Johan de Jong for comments and their interest. The first author was supported in part by NSF DMS Grant \#2502333 and Simons Foundation grant MP-TSM-00002400. The second author was supported in part by NSF DMS Grant \#2402087. 

\section{Future work}
The notion of b-nil formally unramified morphism has not really been explored in this paper. After the first version of the paper was released on arXiv, we learned that Javier Carvajal-Rojas and Axel St{\"a}bler were simultaneously working on (b-nil) formal unramifiedness. Thus, we will be releasing a joint paper with them where we give a complete characterization of this notion for maps of $\mathbf{F}_p$-algebras. To give a glimpse of the results we have, we have seen that the relative Frobenius being an epimorphism implies b-nil formal unramifiedness (\autoref{rem:surjective-rel-Frobenius-b-nil-formally-unramified}). It turns out the converse is also true in general, i.e., b-nil formal unramifiedness characterizes when the relative Frobenius is an epimorphism. This and other related results will appear in the future work with Carvajal-Rojas and St{\"a}bler.

\bibliographystyle{skalpha}
\bibliography{main,preprints}

@preamble{"\def\cfudot#1{\ifmmode\setbox7\hbox{$\accent"5E#1$}\else \setbox7\hbox{\accent"5E#1}\penalty 10000\relax\fi\raise 1\ht7 \hbox{\raise.1ex\hbox to 1\wd7{\hss.\hss}}\penalty 10000 \hskip-1\wd7\penalty 10000\box7} "}

@article {EGAIV_I,
    AUTHOR = {Grothendieck, A.},
     TITLE = {\'El\'ements de g\'eom\'etrie alg\'ebrique. {IV}. \'Etude
              locale des sch\'emas et des morphismes de sch\'emas. {I}},
   JOURNAL = {Inst. Hautes \'Etudes Sci. Publ. Math.},
  FJOURNAL = {Institut des Hautes \'Etudes Scientifiques. Publications
              Math\'ematiques},
    NUMBER = {20},
      YEAR = {1964},
     PAGES = {259},
      ISSN = {0073-8301,1618-1913},
   MRCLASS = {14.05 (13.10)},
  MRNUMBER = {173675},
MRREVIEWER = {H.\ Hironaka},
       URL = {http://www.numdam.org/item/PMIHES_1964__20__5_0},
}

@article{DumitrescuReduceness,
  author =        {Dumitrescu, Tiberiu},
  journal =       {Comm. Algebra},
  number =        {5},
  pages =         {1787--1795},
  title =         {Reducedness, formal smoothness and approximation in
                   characteristic {$p$}},
  volume =        {23},
  year =          {1995},
  doi =           {10.1080/00927879508825309},
  issn =          {0092-7872},
  url =           {https://doi-org.proxy2.cl.msu.edu/10.1080/
                  00927879508825309},
}

@book {AndreHomologyCommutative,
    AUTHOR = {Andr\'e, Michel},
     TITLE = {Homologie des alg\`ebres commutatives},
    SERIES = {Die Grundlehren der mathematischen Wissenschaften},
    VOLUME = {Band 206},
 PUBLISHER = {Springer-Verlag, Berlin-New York},
      YEAR = {1974},
     PAGES = {xv+341},
   MRCLASS = {18H20 (12GXX 13DXX 14D15)},
  MRNUMBER = {352220},
MRREVIEWER = {R.\ M.\ Fossum},
}

@article {AvramovLCI,
    AUTHOR = {Avramov, Luchezar L.},
     TITLE = {Locally complete intersection homomorphisms and a conjecture
              of {Q}uillen on the vanishing of cotangent homology},
   JOURNAL = {Ann. of Math. (2)},
  FJOURNAL = {Annals of Mathematics. Second Series},
    VOLUME = {150},
      YEAR = {1999},
    NUMBER = {2},
     PAGES = {455--487},
      ISSN = {0003-486X,1939-8980},
   MRCLASS = {13D03 (13H10 14M10)},
  MRNUMBER = {1726700},
MRREVIEWER = {Paul\ Roberts},
       DOI = {10.2307/121087},
       URL = {https://doi.org/10.2307/121087},
}

@article {BriggsIyengarCotangentRigidity,
    AUTHOR = {Briggs, Benjamin and Iyengar, Srikanth B.},
     TITLE = {Rigidity properties of the cotangent complex},
   JOURNAL = {J. Amer. Math. Soc.},
  FJOURNAL = {Journal of the American Mathematical Society},
    VOLUME = {36},
      YEAR = {2023},
    NUMBER = {1},
     PAGES = {291--310},
      ISSN = {0894-0347,1088-6834},
   MRCLASS = {13D03 (13B10 14A15 14A30)},
  MRNUMBER = {4495843},
MRREVIEWER = {Haohao\ Wang},
       DOI = {10.1090/jams/1000},
       URL = {https://doi.org/10.1090/jams/1000},
}

@book{SGA5,
     TITLE = {Cohomologie {$l$}-adique et fonctions {$L$}},
    SERIES = {Lecture Notes in Mathematics},
    VOLUME = {Vol. 589},
      NOTE = {S\'eminaire de G\'eometrie Alg\'ebrique du Bois-Marie
              1965--1966 (SGA 5),
              Edit\'e{} par Luc Illusie},
 PUBLISHER = {Springer-Verlag, Berlin-New York},
      YEAR = {1977},
     PAGES = {xii+484},
      ISBN = {3-540-08248-4},
   MRCLASS = {14F20},
  MRNUMBER = {491704},
MRREVIEWER = {James\ Milne},
}

@article {deJongOlander,
    AUTHOR = {de Jong, Aise Johan and Olander, Noah},
     TITLE = {On weakly \'etale morphisms},
   JOURNAL = {J. Pure Appl. Algebra},
  FJOURNAL = {Journal of Pure and Applied Algebra},
    VOLUME = {228},
      YEAR = {2024},
    NUMBER = {3},
     PAGES = {Paper No. 107532, 9},
      ISSN = {0022-4049,1873-1376},
   MRCLASS = {14F20 (13B40 14A15 14B25)},
  MRNUMBER = {4649343},
MRREVIEWER = {Timothy\ J.\ Ford},
       DOI = {10.1016/j.jpaa.2023.107532},
       URL = {https://doi.org/10.1016/j.jpaa.2023.107532},
}

@incollection {DattaSmithExcellence,
    AUTHOR = {Datta, Rankeya and Smith, Karen E.},
     TITLE = {Excellence in prime characteristic},
 BOOKTITLE = {Local and global methods in algebraic geometry},
    SERIES = {Contemp. Math.},
    VOLUME = {712},
     PAGES = {105--116},
 PUBLISHER = {Amer. Math. Soc., [Providence], RI},
      YEAR = {[2018] \copyright 2018},
   MRCLASS = {13A35},
  MRNUMBER = {3832401},
MRREVIEWER = {Yongwei Yao},
       DOI = {10.1090/conm/712/14344},
       URL = {https://doi-org.proxy.cc.uic.edu/10.1090/conm/712/14344},
}

@article{OhmRu-content,
	Author = {Jack Ohm and David E. Rush},
	Journal = {Math. Scand.},
	Pages = {49--68},
	Title = {Content modules and algebras},
	Volume = 31,
	Year = {1972}}

@article{DattaMurayamaFsolidity,
    author = {Datta, Rankeya and Murayama, Takumi},
    note = {(with an appendix by Karen E. Smith) arXiv:2007.10383},
    title = {Excellence, {$F$}-singularities, and solidity},
    year = {2020}
}

@article {SeydiExcellentII,
    AUTHOR = {Seydi, Hamet},
     TITLE = {Sur la th\'eorie des anneaux excellents en caract\'eristique
              {$p$}. {II}},
   JOURNAL = {J. Math. Kyoto Univ.},
  FJOURNAL = {Journal of Mathematics of Kyoto University},
    VOLUME = {20},
      YEAR = {1980},
    NUMBER = {1},
     PAGES = {155--167},
      ISSN = {0023-608X},
   MRCLASS = {13B20 (14B25)},
  MRNUMBER = {564675},
MRREVIEWER = {Gerhard\ Pfister},
       DOI = {10.1215/kjm/1250522327},
       URL = {https://doi.org/10.1215/kjm/1250522327},
}

@incollection{HochsterJeffriesintflatness,
    AUTHOR = {Hochster, Melvin and Jeffries, Jack},
     TITLE = {Extensions of primes, flatness, and intersection flatness},
 BOOKTITLE = {Commutative algebra---150 years with {R}oger and {S}ylvia
              {W}iegand},
    SERIES = {Contemp. Math.},
    VOLUME = {773},
     PAGES = {63--81},
 PUBLISHER = {Amer. Math. Soc., [Providence], RI},
      YEAR = {[2021] \copyright 2021},
      ISBN = {978-1-4704-5601-6},
   MRCLASS = {13B02 (13B10 13B40 13C11)},
  MRNUMBER = {4321391},
MRREVIEWER = {Marco\ Fontana},
       DOI = {10.1090/conm/773/15533},
       URL = {https://doi.org/10.1090/conm/773/15533},
}

@incollection{Gabber.tStruc,
	Author = {Gabber, Ofer},
	Booktitle = {Geometric aspects of Dwork theory. Vol. I, II},
	Mrclass = {14F05 (14F43)},
	Mrreviewer = {Maurizio Cailotto},
	Pages = {711--734},
	Publisher = {Walter de Gruyter GmbH \& Co. KG, Berlin},
	Title = {Notes on some {$t$}-structures},
	Year = {2004}
}

@article{EGAIV,
	Author = {Grothendieck, A.},
	Fjournal = {Institut des Hautes \'Etudes Scientifiques. Publications Math\'ematiques},
	Issn = {0073-8301},
	Journal = {Inst. Hautes \'Etudes Sci. Publ. Math.},
	Mrclass = {14.55},
	Mrnumber = {MR0238860 (39 \#220)},
	Mrreviewer = {J. P. Murre},
	Number = {32},
	Pages = {361},
	Title = {\'{E}l\'ements de g\'eom\'etrie alg\'ebrique. {IV}. \'{E}tude locale des sch\'emas et des morphismes de sch\'emas {IV}},
	Year = {1967}
}

@article{HochsterHunekeFRegularityTestElementsBaseChange,
	Author = {Hochster, Melvin and Huneke, Craig},
	Coden = {TAMTAM},
	Fjournal = {Transactions of the American Mathematical Society},
	Issn = {0002-9947},
	Journal = {Trans. Amer. Math. Soc.},
	Mrclass = {13A35 (13B99 13F40)},
	Mrnumber = {MR1273534 (95d:13007)},
	Mrreviewer = {Ian M. Aberbach},
	Number = {1},
	Pages = {1--62},
	Title = {{$F$}-regularity, test elements, and smooth base change},
	Volume = {346},
	Year = {1994}
}

@article{KatzmanParameterTestIdealOfCMRings,
	Author = {Katzman, Mordechai},
	Fjournal = {Compositio Mathematica},
	Issn = {0010-437X},
	Journal = {Compos. Math.},
	Number = {4},
	Pages = {933--948},
	Title = {Parameter-test-ideals of {C}ohen-{M}acaulay rings},
	Volume = {144},
	Year = {2008}
}

@article{KatzmanLyubeznikZhangOnDiscretenessAndRationality,
	Author = {Katzman, Mordechai and Lyubeznik, Gennady and Zhang, Wenliang},
	Coden = {JALGA4},
	Doi = {10.1016/j.jalgebra.2008.11.032},
	Fjournal = {Journal of Algebra},
	Issn = {0021-8693},
	Journal = {J. Algebra},
	Mrclass = {13A35 (13F40)},
	Mrnumber = {2567418 (2011c:13005)},
	Mrreviewer = {Yongwei Yao},
	Number = {9},
	Pages = {3238--3247},
	Title = {On the discreteness and rationality of {$F$}-jumping coefficients},
	Url = {http://dx.doi.org/10.1016/j.jalgebra.2008.11.032},
	Volume = {322},
	Year = {2009},
	Bdsk-Url-1 = {http://dx.doi.org/10.1016/j.jalgebra.2008.11.032}
}

@article{KunzOnNoetherianRingsOfCharP,
	Author = {Kunz, Ernst},
	Fjournal = {American Journal of Mathematics},
	Issn = {0002-9327},
	Journal = {Amer. J. Math.},
	Mrclass = {13E05 (14E15)},
	Mrnumber = {MR0432625 (55 \#5612)},
	Mrreviewer = {Hamet Seydi},
	Number = {4},
	Owner = {kschwede},
	Pages = {999--1013},
	Timestamp = {2009.07.03},
	Title = {On {N}oetherian rings of characteristic {$p$}},
	Volume = {98},
	Year = {1976}
}

@book{MatsumuraCommutativeRingTheory,
	Address = {Cambridge},
	Author = {Matsumura, Hideyuki},
	Edition = {Second},
	Isbn = {0-521-36764-6},
	Mrclass = {13-01},
	Mrnumber = {MR1011461 (90i:13001)},
	Note = {Translated from the Japanese by M. Reid},
	Pages = {xiv+320},
	Publisher = {Cambridge University Press},
	Series = {Cambridge Studies in Advanced Mathematics},
	Title = {Commutative ring theory},
	Volume = {8},
	Year = {1989}
}

@article{rg71,
  author =        {Raynaud, M. and Gruson, L.},
  journal =       {Invent. Math.},
  pages =         {1--89},
  title =         {Crit\`eres de platitude et de projectivit\'e.
                   {T}echniques de ``platification'' d'un module},
  volume =        {13},
  year =          {1971},
  doi =           {10.1007/BF01390094},
  issn =          {0020-9910},
  url =           {http://dx.doi.org/10.1007/BF01390094},
}

@article{SharpAnExcellentFPureRing,
	Author = {Sharp, Rodney Y.},
	Coden = {TAMTAM},
	Doi = {10.1090/S0002-9947-10-05166-4},
	Fjournal = {Transactions of the American Mathematical Society},
	Issn = {0002-9947},
	Journal = {Trans. Amer. Math. Soc.},
	Mrclass = {13A35 (13D45 13E10 16S36)},
	Mrnumber = {2657687 (2011h:13005)},
	Mrreviewer = {Karl Schwede},
	Number = {10},
	Owner = {Karl Work Laptop},
	Pages = {5455--5481},
	Timestamp = {2012.06.21},
	Title = {An excellent {$F$}-pure ring of prime characteristic has a big tight closure test element},
	Url = {http://dx.doi.org/10.1090/S0002-9947-10-05166-4},
	Volume = {362},
	Year = {2010},
	Bdsk-Url-1 = {http://dx.doi.org/10.1090/S0002-9947-10-05166-4}
}

@article {SharpBigTestElements,
    AUTHOR = {Sharp, Rodney Y.},
     TITLE = {Big tight closure test elements for some non-reduced excellent
              rings},
   JOURNAL = {J. Algebra},
  FJOURNAL = {Journal of Algebra},
    VOLUME = {349},
      YEAR = {2012},
     PAGES = {284--316},
      ISSN = {0021-8693,1090-266X},
   MRCLASS = {13A35 (13F40 13H10 13J10)},
  MRNUMBER = {2853638},
MRREVIEWER = {Reza\ Naghipour},
       DOI = {10.1016/j.jalgebra.2011.08.009},
       URL = {https://doi.org/10.1016/j.jalgebra.2011.08.009},
}

@misc{stacks-project,
	Author = {The {Stacks Project Authors}},
	Date-Modified = {2019-10-29 17:51:58 +0000},
	Howpublished = {\url{http://stacks.math.columbia.edu}},
	Shorthand = {Stacks},
	Title = {{\itshape Stacks Project}}
}

@article{RaduUneClasseDAnneaux,
	Author = {Radu, Nicolae},
	Fjournal = {Revue Roumaine de Math\'ematiques Pures et Appliqu\'ees. Romanian Journal of Pure and Applied Mathematics},
	Issn = {0035-3965},
	Journal = {Rev. Roumaine Math. Pures Appl.},
	Mrclass = {13E05 (13F40 13H05)},
	Mrnumber = {1172271 (93g:13014)},
	Mrreviewer = {Takashi Harase},
	Number = {1},
	Pages = {79--82},
	Title = {Une classe d'anneaux noeth\'eriens},
	Volume = {37},
	Year = {1992}
}

@article{AndreHomomorphismsRegulariers,
	Author = {Andr{\'e}, Michel},
	Coden = {CASMEI},
	Fjournal = {Comptes Rendus de l'Acad\'emie des Sciences. S\'erie I. Math\'ematique},
	Issn = {0764-4442},
	Journal = {C. R. Acad. Sci. Paris S\'er. I Math.},
	Mrclass = {13D03 (13A35)},
	Mrnumber = {1214408 (94e:13026)},
	Mrreviewer = {Marco Fontana},
	Number = {7},
	Pages = {643--646},
	Title = {Homomorphismes r\'eguliers en caract\'eristique {$p$}},
	Volume = {316},
	Year = {1993}
}

@misc{Morrow2019THH,
  author       = {Morrow, Matthew},
  title        = {{Topological Hochschild homology in arithmetic geometry}},
  howpublished = {Lecture Notes, Arizona Winter School},
  year         = {2019},
  note         = {Available at: \url{https://swc-math.github.io/aws/2019/2019MorrowNotes.pdf}},
  url          = {https://swc-math.github.io/aws/2019/2019MorrowNotes.pdf}
}

@misc{carvajalrojas2025kunztypetheoremformalunramification,
      title={A Kunz-type theorem for formal unramification}, 
      author={Javier Carvajal-Rojas and Axel Stäbler},
      year={2025},
      eprint={2512.06063},
      archivePrefix={arXiv},
      primaryClass={math.AG},
      url={https://arxiv.org/abs/2512.06063}, 
}

@article{BlickleFinkFfinite,
      title={{$L$}-smooth factorization for Noetherian {$F$}-finite rings}, 
      author={Blickle, Manuel and Fink, Daniel},
      year={2025},
      Journal={arXiv:2501.09437},
      archivePrefix={arXiv},
      primaryClass={math.AC},
      url={https://arxiv.org/abs/2501.09437}, 
}

@misc{MaPolstraFSing,
      title={{$F$}-singularities: a commutative algebra approach}, 
      author={Ma, Linquan and Polstra, Thomas},
      year={2025},
      url={https://www.math.purdue.edu/~ma326/F-singularitiesBook.pdf},
    note         = {available at \url{https://www.math.purdue.edu/~ma326/F-singularitiesBook.pdf}}
}

@article{FinkRelativelyPerfect,
      title={Relative Inverse Limit Perfection of Derived Commutative Rings}, 
      author={Daniel Fink},
      year={2025},
      Journal={arXiv:2506.10626},
      archivePrefix={arXiv},
      primaryClass={math.AC},
      url={https://arxiv.org/abs/2506.10626}, 
}

@article{Bhattp-adic,
      title={p-adic derived de Rham cohomology}, 
      author={Bhargav Bhatt},
      year={2012},
      Journal={arXiv:1204.6560},
      archivePrefix={arXiv},
      primaryClass={math.AG},
      url={https://arxiv.org/abs/1204.6560}, 
      
}

@article{DESTate,
      title={Variants on Frobenius Intersection Flatness and Applications to Tate Algebras}, 
      author={Rankeya Datta and Neil Epstein and Karl Schwede and Kevin Tucker},
      year={2025},
      Journal = {arXiv:2504.06444},  
      eprint={2504.06444},
      archivePrefix={arXiv},
      primaryClass={math.AC},
      url={https://arxiv.org/abs/2504.06444}, 
}

@article{DattaEpsteinTuckerMLmodules,
      title={Mittag-Leffler modules and variants on intersection flatness}, 
      author={Rankeya Datta and Neil Epstein and Kevin Tucker},
      year={2025},
      Journal = {arXiv:2305.11139},  
      eprint={2305.11139},
      archivePrefix={arXiv},
      primaryClass={math.AC},
      url={https://arxiv.org/abs/2305.11139}, 
}

@misc{LurieEllipticII,
  author = {Lurie, Jacob},
  title = {Elliptic Cohomology {II}: {O}rientations},
  howpublished = {\url{http://www.math.harvard.edu/~lurie/papers/Elliptic-II.pdf}},
  year = {2018},
  note = {available at: \url{https://www.math.ias.edu/~lurie/papers/Elliptic-II.pdf}}
}

@misc{LurieSpectralAlgebraicGeometry,
      title={Spectral Algebraic Geometry}, 
      author={Jacob Lurie},
      year={2018},
      url={https://www.math.ias.edu/~lurie/papers/SAG-rootfile.pdf}, 
      note         = {available at: \url{https://www.math.ias.edu/~lurie/papers/SAG-rootfile.pdf}}
}

@misc{Bhatt_imperfect_trivial_cc,
  author = {Bhatt, Bhargav},
  title = {An imperfect ring with a trivial cotangent complex},
  howpublished = {},
  year = {2012}, 
  url = {https://www.math.ias.edu/~bhatt/math/trivial-cc.pdf},
  note = {available at \url{https://www.math.ias.edu/~bhatt/math/trivial-cc.pdf}}
}

\end{document}